\pgfplotsset{compat=1.15}
\newtheorem{theorem}{Theorem}[section]
\newtheorem{lemma}[theorem]{Lemma}
\newtheorem{proposition}[theorem]{Proposition}
\newtheorem{corollary}[theorem]{Corollary}
\theoremstyle{definition}
\newtheorem{definition}[theorem]{Definition}
\theoremstyle{remark}
\newtheorem{remark}[theorem]{Remark}
\numberwithin{equation}{section}
\DeclareMathOperator{\Mod}{Mod}
\newcommand{\V}{\mathbb{V}}
\newcommand{\RR}{\mathrm{R}}
\newcommand{\R}{\mathbb{R}}
\newcommand{\Z}{\mathbb{Z}}
		\newcommand{\RHom}[1][]{\RR\mathrm{Hom}_{\raise1.5ex\hbox to.1em{}#1}}
\newcommand{\Hom}[1][]{\mathrm{Hom}_{\raise1.5ex\hbox to.1em{}#1}}
\newcommand{\rhom}[1][]{{\RR\mathscr{H}\mspace{-3mu}om}_{\raise1.5ex\hbox to.1em{}#1}}
\newcommand{\opnorm}[1]{{\left\vert\kern-0.25ex\left\vert\kern-0.25ex\left\vert #1 
	\right\vert\kern-0.25ex\right\vert\kern-0.25ex\right\vert}}
\newcommand{\D}{\textnormal{D}}
\newcommand{\kk}{\textbf{k}}
\newcommand{\Rr}{\textnormal{R}}
\newcommand{\CF}{\textnormal{CF}}
\newcommand{\PL}{\textnormal{PL}}
\newcommand{\Ho}{\textnormal{H}}
\newcommand{\Gr}{\textnormal{Gr}}
\newcommand{\dk}{~\mathrm{d} \chi}
\begin{document}	

\title{Persistence and the Sheaf-Function Correspondence}

\address{}
\curraddr{kmjhmkljh}
\email{}
\thanks{}

\author{Nicolas Berkouk}
\date{\today}

\begin{abstract}
The sheaf-function correspondence identifies the group of constructible functions on a real analytic manifold $M$ with the Grothendieck group of constructible sheaves on $M$. When $M$ is a finite dimensional real vector space, Kashiwara-Schapira have recently introduced the convolution distance between sheaves of $\kk$-vector spaces on $M$. In this paper, we characterize distances on the group of constructible functions on a real finite dimensional vector space that can be controlled by the convolution distance through the sheaf-function correspondence. Our main result asserts that such distances are almost trivial: they vanish as soon as two constructible functions have the same Euler integral. We formulate consequences of our result for Topological Data Analysis: there cannot exists non-trivial additive invariants of persistence modules that are continuous for the interleaving distance.
\end{abstract}

\maketitle

\section{Introduction}

Inspired by persistence theory from Topological Data Analysis (TDA)  \cite{oudot2015persistence,dey2022computational}, Kashiwara and Schapira have recently introduced the convolution distance between (derived) sheaves on a finite  dimensional real normed vector space \cite{KS18}.  This construction has found important applications, both in TDA --where it allows us to express stability of certain constructions with respect to noise in datasets-- \cite{BG18, BGO19, BP21, BP22} and in symplectic topology \cite{AsIke20,AsIke22, GC22}.  A challenging research direction,  of interest to these two fields,  is to associate numerical invariants to a sheaf on a vector space, which satisfy a certain form of stability with respect to the convolution distance. 

To do so, the TDA community has been mostly using module-theoretic notions,  such as the rank-invariant \cite{CZ07, Cer13},  the Hilbert function or the graded Betti numbers \cite{HOST17, Ber21, OudSco21, LW22}.  From a sheaf-theoretic perspective,  	a natural numerical invariant to consider is the local Euler characteristic,  which is a constructible function that encodes exactly the class of a sheaf in the Grothendieck group,  by a result of Kashiwara \cite{Kash85}.  This is usually called the sheaf-function correspondence.

The group of constructible functions is well-understood  and has the surprisingly nice property that the formalism of Grothendieck's six operations descend to it through the sheaf-function correspondence \cite{Sch91}.  In particular, this allows one to introduce well-behaved transforms of constructible functions, such as the Radon or hybrid transforms \cite{Sch95,BG09,Leb21,KiSa21}.  Constructible functions have already been successfully applied in several domains, such as target enumeration for sensor networks, image and shape analysis \cite{BG09, CMT18},  though the question of their stability with respect to noise in the input data remain poorly understood \cite[Chapter 16]{CGR12}. For instance, in the context of predicting clinical outcomes in glioblastoma \cite{Crawford_2019}, the authors overcome numerical instability by introducing an \emph{ad-hoc} smoothed version of the Euler Characteristic Transform (ECT) \cite{CMT18}, that is empirically more stable than the standard ECT, though no theoretical stability result is provided. 

 In this context,  a natural question is to understand the stability of the sheaf-function correspondence.  The convolution distance  is already considered as a meaningful measurement of dissimilarity between sheaves,  both in applied and pure contexts. Therefore,  we propose in this work to characterize the pseudo-extended metrics on the group of constructible functions on a vector space,  which are controlled in an appropriate sense by the convolution distance through the sheaf-function correspondence.  Our main result (Theorem \ref{th:main}) asserts that these metrics are almost trivial: they vanish as soon as two constructible functions have the same Euler integral.

 Thanks to results by the author and F. Petit \cite{BP21}, we are able to transfer Theorem \ref{th:main} in the context of persistence modules. In particular, we obtain that every additive invariants of compactly generated constructible persistence modules that is continuous for the interleaving distance is trivial (Theorems \ref{th:derivedpersistence} and \ref{th:mainpers}).
 
 We acknowledge that similar results have been obtained independently by Biran, Cornea and Zhang in \cite{BCZ22}, in the specific case of constructible functions over a one-dimensional vector space, with the aim to study $K$-theoretical invariants of triangulated persistence categories.  
 

\section{Sheaves and Constructible Functions}

In this section, we introduce the necessary background and terminology on constructible sheaves and constructible functions.

\subsection{Sheaf-Function correspondence}

Throughout this paper, $\kk$  denotes a  field. For a topological space $X$, we denote by $\textnormal{Mod}(\kk_X)$ the category of sheaves of $\kk$-vector spaces on $X$, and $\D^b(\kk_X)$ its bounded derived category. Let $M$ be a real analytic manifold. The definitions and results of this section are exposed in detail in \cite[Chapters 8 \& 9.7]{KS90}.
\begin{definition}
 A sheaf $F \in \textnormal{Mod}(\kk_M)$ is $\R$-constructible (or constructible for simplicity), if there exists a locally finite covering of $M$ by subanalytic subsets $M = \cup_\lambda M_\lambda$ such that for all $M_\lambda$ and all $j\in \Z$, the restriction $F_{|M_\lambda}$ is locally constant and of finite rank.
\end{definition}

We denote by $\textnormal{Mod}_{\R c}(\kk_M)$ the full subcategory of $\textnormal{Mod}(\kk_M)$ consisting of constructible sheaves and by $\D^b_{\R c}(\kk_M)$ the full subcategory of $\D^b(\kk_M)$ whose objects are sheaves $F \in \D^b(\kk_M)$ such that $\Ho^j(F) \in \textnormal{Mod}_{\R c}(\kk_M)$ for $j\in \Z$. It is well-known \cite[Th. 8.4.5]{KS90} that the functor $ \D^b(\textnormal{Mod}_{\R c}(\kk_M))\longrightarrow \D^b_{\R c}(\kk_M) $ is an equivalence. The objects of $\D^b_{\R c}(\kk_M)$ are still called constructible sheaves.

\begin{definition}
A constructible function on $M$ is a map $\varphi : M \longrightarrow \Z$ such that the fibers $\varphi^{-1}(m)$ are subanalytic subsets, and the family $\{\varphi^{-1}(m)\}_{m \in \Z}$ is locally finite in $M$.
\end{definition}

We denote by $\CF(M)$ the group of constructible functions on $M$. All the remaining results of the section are contained in \cite[Chapter 9.7]{KS90}.

\begin{theorem}
Let $\varphi \in \CF(M)$, there exists a locally finite family of compact contractible subanalytic subsets $\{X_\lambda\}$ such that $\varphi = \sum_\lambda C_\lambda \cdot 1_{X_\lambda}$, with $C_\lambda \in \Z$.

\end{theorem}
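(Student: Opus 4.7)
The plan is to reduce the problem to a combinatorial identity on a subanalytic triangulation adapted to $\varphi$. First, I would invoke the subanalytic triangulation theorem: since the fibers $\{\varphi^{-1}(n)\}_{n\in\Z}$ form a locally finite family of subanalytic subsets of $M$, there exists a locally finite subanalytic triangulation $(K, h)$ of $M$ (with $h: |K| \to M$ a subanalytic homeomorphism) such that each fiber $\varphi^{-1}(n)$ is a union of images of open simplices of $K$. In particular, on each open simplex $\mathring{\sigma}$ the function $\varphi$ takes a constant value $n_\sigma \in \Z$, so one obtains the tautological decomposition
\[
\varphi \;=\; \sum_{\sigma \in K} n_\sigma \cdot 1_{\mathring{\sigma}},
\]
where the sum runs over all (images of) open simplices, which is locally finite by the local finiteness of the triangulation.

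The next step converts each summand $1_{\mathring{\sigma}}$ into an integer combination of indicator functions of compact contractible subanalytic subsets, namely the closed faces of $\sigma$. For a closed simplex $\tau \subseteq M$ (which is subanalytic, compact, and contractible since $h$ is a subanalytic homeomorphism) one has the obvious partition
\[
1_{\tau} \;=\; \sum_{\tau' \leq \tau} 1_{\mathring{\tau'}},
\]
the sum running over all nonempty faces $\tau'$ of $\tau$. Möbius inversion on the Boolean face poset of $\sigma$ then yields
\[
1_{\mathring{\sigma}} \;=\; \sum_{\tau \leq \sigma} (-1)^{\dim \sigma - \dim \tau} \, 1_{\tau}.
\]
Substituting into the expression for $\varphi$ gives
\[
\varphi \;=\; \sum_{\sigma \in K} n_\sigma \sum_{\tau \leq \sigma} (-1)^{\dim \sigma - \dim \tau} \, 1_{\tau}
\;=\; \sum_{\tau \in K} C_\tau \cdot 1_{\tau},
\]
where $C_\tau := \sum_{\sigma \geq \tau} (-1)^{\dim \sigma - \dim \tau} n_\sigma$; the reordering is legitimate because the triangulation is locally finite, so each compact set meets only finitely many simplices and only finitely many $\sigma$ have a given $\tau$ as a face. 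Finally, discarding those $\tau$ for which $C_\tau = 0$ and setting $X_\alpha := \tau_\alpha$ delivers the desired decomposition.

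The one point requiring care is to check that the reindexed family $\{X_\alpha\}$ remains locally finite in $M$, which follows directly from the local finiteness of the triangulation: any relatively compact subset of $M$ meets only finitely many closed simplices, hence only finitely many of the $X_\alpha$. The main (and essentially only) nontrivial ingredient is therefore the existence of a subanalytic triangulation of $M$ compatible with a prescribed locally finite subanalytic partition, which is the standard triangulation theorem for subanalytic sets \cite[Proposition 8.2.5]{KS90}; the rest is combinatorial bookkeeping via Möbius inversion on Boolean lattices.
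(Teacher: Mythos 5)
Your proof is correct. Note that the paper itself gives no proof of this statement — it is recalled from \cite[Chapter 9.7]{KS90} — and the argument there is essentially the one you propose: triangulate $M$ subanalytically, compatibly with the fibers of $\varphi$, so that $\varphi$ is constant on open simplices, and then convert each $1_{\mathring{\sigma}}$ into the alternating sum $\sum_{\tau\leq\sigma}(-1)^{\dim\sigma-\dim\tau}1_{\tau}$ of indicator functions of its closed faces, which are compact, contractible and subanalytic; so your proposal matches the cited argument.
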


\begin{proposition}
Let $\varphi \in \CF(M)$ with compact support. For any finite sum decomposition $\varphi =\sum_\lambda C_\lambda \cdot 1_{X_\lambda} $, where the $X_\lambda$'s are subanalytic compact and contractible, the quantity $\sum_\lambda C_\lambda$ only depends on $\varphi$.
\end{proposition}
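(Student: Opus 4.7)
The strategy is to exhibit $\sum_\alpha C_\alpha$ as the value at $\varphi$ of a group homomorphism that depends only on $\varphi$. The natural candidate is the \emph{Euler integral} (compactly-supported Euler characteristic), a map $\chi_c$ defined on the subgroup of $\CF(M)$ consisting of constructible functions with compact support, valued in $\Z$. If $\chi_c$ can be shown to be a well-defined linear map sending $1_X$ to $\chi(X)$ for every compact subanalytic $X$, then applying it to the given decomposition $\varphi = \sum_\alpha C_\alpha \cdot 1_{X_\alpha}$ yields $\chi_c(\varphi) = \sum_\alpha C_\alpha \chi(X_\alpha) = \sum_\alpha C_\alpha$, using that $\chi(X_\alpha) = 1$ for a compact contractible set. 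Since $\chi_c(\varphi)$ is manifestly intrinsic to $\varphi$, this will prove the proposition.

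To construct $\chi_c$, first set $\chi_c(X) := \sum_i (-1)^i \dim_{\kk} \Ho^i_c(X;\kk)$ for any relatively compact subanalytic $X \subset M$. The finiteness of this alternating sum follows from the existence of a finite subanalytic triangulation compatible with $X$. The crucial additivity property—$\chi_c(X) = \chi_c(Y) + \chi_c(X \setminus Y)$ for a locally closed subanalytic $Y \subset X$—is a direct consequence of the long exact sequence in compactly-supported cohomology associated with the open-closed decomposition. For compact contractible $X$, homotopy invariance gives $\chi_c(X) = \chi(X) = 1$.

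I then extend to constructible functions with compact support by $\chi_c(\varphi) := \sum_{n \in \Z} n \cdot \chi_c(\varphi^{-1}(n))$; the sum is finite since such a $\varphi$ takes only finitely many nonzero values, each on a relatively compact subanalytic subset. The central step is to verify linearity, $\chi_c(\varphi + \psi) = \chi_c(\varphi) + \chi_c(\psi)$. I would prove it by choosing a common subanalytic partition refining the level sets of both $\varphi$ and $\psi$ (which exists because the subanalytic category is closed under finite Boolean operations), so that both functions are constant on each piece, and then applying additivity level by level.

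The linearity step is where the main obstacle lies: the level sets of $\varphi$ and $\psi$ are typically incompatible, forcing one to work at a finer subanalytic stratification and to bookkeep signs and multiplicities carefully when reassembling the levels of $\varphi + \psi$ from intersections. Once linearity is established, the conclusion is immediate: for any decomposition $\varphi = \sum_\alpha C_\alpha \cdot 1_{X_\alpha}$ with the $X_\alpha$ compact contractible subanalytic, linearity together with $\chi_c(X_\alpha) = 1$ gives $\sum_\alpha C_\alpha = \chi_c(\varphi)$, a quantity depending only on $\varphi$.
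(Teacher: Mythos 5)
Your approach is correct, and it is a genuinely different route from the one the paper relies on. The paper does not prove this proposition itself: it quotes it from Kashiwara--Schapira (Chapter 9.7 of \emph{Sheaves on manifolds}), where well-definedness is obtained through the sheaf-function correspondence rather than by constructing the Euler integral by hand. In that argument one picks $F \in \D^b_{\R c}(\kk_M)$ with $\chi(F) = \varphi$ (for instance a direct sum of shifted copies of the $\kk_{X_\alpha}$), observes that $\Rr\Gamma(M;\kk_{X_\alpha}) \simeq \kk$ because $X_\alpha$ is compact and contractible, so that $\sum_\alpha C_\alpha = \chi\left(\Rr\Gamma(M;F)\right)$, and then notes that this quantity depends only on the class $[F]$ in the Grothendieck group, hence only on $\varphi$ by Kashiwara's isomorphism (this is exactly the content echoed in Lemma \ref{lem:integralsection}). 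The additivity that you have to establish by hand is there supplied for free by the triangulated structure: $\chi(\Rr\Gamma_c)$ is automatically additive on distinguished triangles. Your construction of $\chi_c$ via compactly supported cohomology of level sets is the standard ``Euler calculus'' route; it buys independence from the Grothendieck-group isomorphism (which is a harder theorem than anything you use), at the cost of doing the additivity and refinement bookkeeping explicitly, which in the subanalytic category is legitimate but leans on the triangulation theorem you already invoke for finiteness. One small imprecision to fix: for $Y \subset X$ locally closed but neither open nor closed, the open-closed long exact sequence does not apply directly; you should either apply it twice (using that $Y$ is open in its closure inside $X$), or order the pieces of a compatible triangulation so that successive unions are open and induct. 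With that adjustment, your linearity step goes through exactly as you sketch, and the conclusion $\sum_\alpha C_\alpha = \chi_c(\varphi)$ is sound.
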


\begin{definition}
With the above notations, one defines $\int \varphi \dk := \sum_\lambda C_\lambda$.
\end{definition}

To any constructible sheaf $F \in \D^b_{\R c }(\kk_M)$, it is possible to associate a constructible function $\chi(F) \in \CF(M)$, called the local Euler characteristic of $F$, and defined by:

\[\chi(F) (x) = \chi(F_x) = \sum_{i\in \Z} (-1)^i \textnormal{dim}_\kk(\Ho^i(F)_x).\]

\noindent It is clear that for any distinguished triangle $F' \longrightarrow F \longrightarrow F'' \stackrel{+1}{\longrightarrow}$ in $\D^b_{\R c}(\kk_M)$, one has $\chi(F) = \chi(F') + \chi(F'')$. Therefore, $\chi$ factorizes through the Grothendieck group $K(\D^b_{\R c}(\kk_M))$ and there is a well-defined morphism of groups $K(\D^b_{\R c}(\kk_M)) \longrightarrow\CF(M)$ mapping $[F]$ to $\chi(F)$.

\begin{theorem}[Sheaf-function correspondence]\label{th:correspondence}
The morphism $K(\D^b_{\R c}(\kk_M)) \longrightarrow\CF(M)$ is an isomorphism of groups.
\end{theorem}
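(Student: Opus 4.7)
The plan is to construct an explicit preimage for surjectivity and to prove injectivity by a descending induction on the dimension of the support, pivoting on the open-closed distinguished triangle.

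For surjectivity, given $\varphi \in \CF(M)$, apply the decomposition theorem above to write $\varphi = \sum_\alpha C_\alpha \cdot 1_{X_\alpha}$ with each $X_\alpha$ compact, contractible and subanalytic. The extension-by-zero sheaf $\kk_{X_\alpha} \in \D^b_{\R c}(\kk_M)$ has stalk $\kk$ on $X_\alpha$ and $0$ elsewhere, hence $\chi(\kk_{X_\alpha}) = 1_{X_\alpha}$; the class $\sum_\alpha C_\alpha [\kk_{X_\alpha}]$ is then a preimage of $\varphi$.

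For injectivity, let $F \in \D^b_{\R c}(\kk_M)$ satisfy $\chi(F) = 0$. Invoke the subanalytic triangulation theorem to refine a stratification adapted to $F$ into a locally finite partition $M = \bigsqcup_\alpha S_\alpha$ by contractible locally closed subanalytic cells; contractibility forces each locally constant cohomology sheaf $\Ho^i(F)|_{S_\alpha}$ to be constant, of some finite rank $d_{\alpha,i}$. Proceed by descending induction on the maximal dimension $d$ of a stratum intersecting $\Supp F$: peel off an open subanalytic union $U$ of $d$-dimensional strata through the distinguished triangle $F_U \to F \to F_{M\setminus U} \stackrel{+1}{\to}$, so that in $K$-theory $[F] = [F_U] + [F_{M\setminus U}]$. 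The truncation triangles $\tau^{\leq i-1} F_U \to \tau^{\leq i} F_U \to \Ho^i(F_U)[-i] \stackrel{+1}{\to}$ combined with the constancy of cohomology on each $S_\alpha \subset U$ identify $[F_U] = \sum_{S_\alpha \subset U} n_\alpha [\kk_{S_\alpha}]$ with $n_\alpha = \sum_i (-1)^i d_{\alpha,i}$. Applying $\chi$ and evaluating at any point $x \in S_\alpha$ gives $n_\alpha = \chi(F)(x) = 0$, hence $[F_U] = 0$ and $[F] = [F_{M\setminus U}]$, a class supported in strictly smaller dimension. Iterating yields $[F] = 0$.

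The main obstacle is the injectivity step: it rests on producing a subanalytic cell decomposition compatible with $F$ (a nontrivial result from subanalytic geometry) and on handling the induction carefully in the non-compact setting. Local finiteness of the triangulation together with the boundedness of $F$ ensures that each dimension level yields a well-defined $K$-theoretic decomposition into constant sheaves on locally closed cells, and that the recursive elimination of strata terminates in finitely many steps relative to any compact subset. The contractibility of cells is the crucial ingredient that forces locally constant sheaves to be trivial, allowing the class of the top layer to be read off directly from $\chi(F)$.
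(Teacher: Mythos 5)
The paper itself gives no proof of this statement: it is quoted from Kashiwara--Schapira \cite[Chapter 9.7]{KS90} (going back to Kashiwara's index theorem), so there is no in-paper argument to compare against. Your sketch is in fact the standard proof from that source, and its architecture is sound: surjectivity from $\chi(\kk_{X_\alpha})=1_{X_\alpha}$ for $X_\alpha$ compact, contractible, subanalytic; injectivity from a subanalytic triangulation adapted to $F$, the open--closed distinguished triangle, truncation triangles, and descending induction on the dimension of the support, with contractibility of the cells forcing the cohomology sheaves to be constant there.

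Two steps, however, are glossed in a way that does not literally parse outside the compactly supported case. First, the decomposition $\varphi=\sum_\alpha C_\alpha\cdot 1_{X_\alpha}$ and the adapted triangulation are only \emph{locally} finite, hence possibly infinite, whereas elements of a Grothendieck group are finite sums; so ``$\sum_\alpha C_\alpha[\kk_{X_\alpha}]$'' and ``$[F_U]=\sum_{S_\alpha\subset U} n_\alpha[\kk_{S_\alpha}]$'' are not defined as written, and invoking ``local finiteness ensures a well-defined $K$-theoretic decomposition'' does not resolve this. For surjectivity the fix is to take the class of an actual sheaf, e.g.\ $\bigoplus_{C_\alpha>0}\kk_{X_\alpha}^{C_\alpha}\oplus\bigoplus_{C_\alpha<0}\kk_{X_\alpha}^{-C_\alpha}[1]$, which is a constructible object precisely because the family is locally finite, and whose local Euler characteristic is $\varphi$. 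For injectivity, avoid the infinite sum: the finitely many truncation triangles give $[F_U]=\sum_i(-1)^i[\Ho^i(F)_U]$, and setting $A=\bigoplus_{i\ \mathrm{even}}\Ho^i(F)_U$, $B=\bigoplus_{i\ \mathrm{odd}}\Ho^i(F)_U$ (finite sums over $i$), the hypothesis $\chi(F)=0$ says $A$ and $B$ are constant of the same finite rank on each cell of $U$, hence $A\simeq B$ and $[F_U]=[A]-[B]=0$ with no infinite sums in $K$. Second, the union $U$ of the top-dimensional strata meeting $\Supp F$ is in general open only in $\Supp F$, not in $M$; the triangle $F_U\to F\to F_{M\setminus U}\stackrel{+1}{\to}$ is still distinguished, but you should justify it via $F\simeq F_{\Supp F}$ and the fact that $\Supp F$ is a closed union of strata, so that $U$ is open in it. With these repairs (and the observation, which you use implicitly, that every element of $K(\D^b_{\R c}(\kk_M))$ is a single class $[F]$, thanks to shifts and finite direct sums, so testing injectivity on objects is legitimate), your argument is complete and coincides with the classical one.
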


\begin{remark}
The proof of the above theorem in \cite[Theorem 9.7.1]{KS90} do not make use of the characteristic $0$ hypothesis stated at the beginning of  \cite[Chapter 9]{KS90} for expository convenience, and therefore extends to any field.
\end{remark}

\begin{lemma}\label{lem:integralsection}
Let $F \in \D^b_{\R c }(\kk_M)$ with compact support, then:

\[\int \chi(F) \dk = \chi\left(\Rr \Gamma(M; F)\right) = \sum_{i\in \Z}(-1)^i \textnormal{dim}_\kk \left(\Ho^i(M;F)\right). \]
\end{lemma}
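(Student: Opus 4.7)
The plan is to reduce the statement to the case of an indicator sheaf of a compact contractible set, using both the sheaf-function correspondence and the additivity of the Euler characteristic of global sections.

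First I would decompose $\chi(F) \in \CF(M)$ as a finite sum $\chi(F) = \sum_\alpha C_\alpha \cdot 1_{X_\alpha}$ with the $X_\alpha$ compact contractible subanalytic subsets (this is possible and the sum is finite because $F$ has compact support). By the sheaf-function correspondence, the class $[F] \in K(\D^b_{\R c}(\kk_M))$ must coincide with $\sum_\alpha C_\alpha \cdot [\kk_{X_\alpha}]$, since both classes have the same image $\chi(F)$ under the isomorphism $K(\D^b_{\R c}(\kk_M)) \xrightarrow{\sim} \CF(M)$ (note that $\kk_{X_\alpha}$ is constructible and compactly supported, so it defines a class in the Grothendieck group, and its local Euler characteristic is $1_{X_\alpha}$).

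Next, I would observe that the assignment $G \mapsto \chi(\Rr\Gamma(M;G)) = \sum_i (-1)^i \dim_\kk \Ho^i(M;G)$ is a well-defined group homomorphism $K(\D^b_{\R c, c}(\kk_M)) \to \Z$, where the subscript $c$ denotes compact support. Indeed, for such $G$ the cohomology spaces $\Ho^i(M;G)$ are finite-dimensional and vanish outside a finite range (a standard finiteness property of constructible sheaves with compact support), and any distinguished triangle $G' \to G \to G'' \xrightarrow{+1}$ yields a long exact sequence in cohomology from which additivity of $\chi(\Rr\Gamma(M;-))$ follows. Applying this homomorphism to the identity $[F] = \sum_\alpha C_\alpha [\kk_{X_\alpha}]$ yields
\[
\chi(\Rr\Gamma(M;F)) = \sum_\alpha C_\alpha \cdot \chi(\Rr\Gamma(M;\kk_{X_\alpha})).
\]

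Finally, for each $\alpha$ the sheaf $\kk_{X_\alpha}$ is the extension by zero of the constant sheaf on the closed compact subset $X_\alpha$, so $\Rr\Gamma(M;\kk_{X_\alpha}) \simeq \Rr\Gamma(X_\alpha;\kk)$, and since $X_\alpha$ is contractible this computes to $\kk$ in degree $0$. Hence $\chi(\Rr\Gamma(M;\kk_{X_\alpha})) = 1$, and summing gives $\chi(\Rr\Gamma(M;F)) = \sum_\alpha C_\alpha = \int \chi(F)$ by the definition of $\int$.

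The only step requiring some care is the lift from the equality of constructible functions to the equality of classes in the Grothendieck group; this is precisely where the sheaf-function correspondence (injectivity of $\chi$ at the level of $K$-theory) is used, and it is what makes the otherwise purely combinatorial decomposition of $\varphi$ into indicators of compact contractible sets effective at the sheaf-theoretic level.
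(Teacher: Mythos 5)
The paper itself does not prove this lemma---it cites KS90, Chapter 9.7---and the cleanest route inside the paper is to specialize Theorem~\ref{th:functoriality}-(2) to the constant map $a_M \colon M \to \{\pt\}$ (which is proper on $\textnormal{supp}(F)$ because $F$ has compact support), giving $\chi(\Rr\Gamma(M;F)) = \chi(\Rr a_{M\ast} F) = a_{M\ast}\chi(F) = \int \chi(F)$. Your argument instead tries to rebuild the result from the Grothendieck-group formulation of the sheaf-function correspondence; the decomposition into $\kk_{X_\alpha}$ and the contractibility computation are fine, but there is a genuine gap in the middle step.

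The gap: the equality $[F] = \sum_\alpha C_\alpha [\kk_{X_\alpha}]$ that injectivity of $\chi$ gives you lives in $K(\D^b_{\R c}(\kk_M))$, whereas the homomorphism $G \mapsto \chi(\Rr\Gamma(M;G))$ you introduce is only defined on $K(\D^b_{\R c,c}(\kk_M))$ (you say so yourself). You cannot apply a homomorphism defined on a subgroup to a relation that you have established only in the ambient group. To make this work you would need to know that $K(\D^b_{\R c,c}(\kk_M)) \to K(\D^b_{\R c}(\kk_M))$ is injective, equivalently that $\chi$ restricts to an isomorphism $K(\D^b_{\R c,c}(\kk_M)) \xrightarrow{\sim} \CF_c(M)$. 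That ``compactly supported'' sheaf-function correspondence is true, but it is a refinement that is not stated in the paper and is not automatic in general: for a full triangulated subcategory $\mathcal{S} \subset \mathcal{T}$, the map $K(\mathcal{S}) \to K(\mathcal{T})$ need not be injective, because the distinguished triangles witnessing a relation in $\mathcal{T}$ may pass through objects outside $\mathcal{S}$. Nor can you dodge the issue by extending your functional to all of $K(\D^b_{\R c}(\kk_M))$: when $M$ is non-compact, $\chi(\Rr\Gamma(M;G))$ and $\chi(\Rr\Gamma_c(M;G))$ are not defined for arbitrary constructible $G$ (e.g.\ $G = \bigoplus_{n\in\Z}\kk_{\{n\}}$ on $\R$ has infinite-dimensional compactly supported cohomology). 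So either invoke the compactly supported version of Kashiwara's theorem explicitly, or simply use Theorem~\ref{th:functoriality}-(2) as above.
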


We review briefly the construction of the direct image operation for constructible functions. Let $f : X \longrightarrow Y$ be a morphism of real analytic manifolds and $\varphi \in \CF(X)$ such that $f$ is proper on $\textnormal{supp}(\varphi)$. Then, for each $y\in Y$, $\varphi \cdot 1_{f^{-1}(y)}$ is constructible and has compact support. 

\begin{definition}
Keeping the above notations, one defines the function $f_\ast \varphi : Y \longrightarrow \Z$ by:

\[(f_\ast \varphi)(y) := \int \varphi \cdot 1_{f^{-1}(y)} \dk.\]
\end{definition}

\begin{remark}
With $a_X : X \longrightarrow \{pt\}$, one has $a_{X\ast} \varphi = (\int \varphi \dk ) \cdot 1_{\{pt\}}  $.
\end{remark}

\begin{theorem} \label{th:functoriality}
\begin{enumerate}
    \item Let $\varphi \in \CF(X)$ and $f : X \longrightarrow Y$ be a morphism of real analytic manifolds such that $f$ is proper on $\textnormal{supp}(\varphi)$. Then $f_\ast \varphi$ is constructible on $Y$.
    
    \item Let $F \in \D^b_{\R c}(\kk_X)$ such that $\chi(F) = \varphi$. Then $\chi(\Rr f_\ast F) = f_\ast \chi (F) = f_\ast \varphi.$
    
    \item Let $g : Y \longrightarrow Z$ be another morphism of real analytic manifold, such that $g \circ f$ is proper on $\textnormal{supp} (g \circ f)$. Then: 
    
    \[(g \circ f)_\ast \varphi = g_\ast f_\ast \varphi.\]
\end{enumerate}
\end{theorem}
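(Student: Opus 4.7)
The natural strategy is to lift everything from constructible functions to constructible sheaves via the sheaf-function correspondence, and then apply the Grothendieck six-operations machinery already available in $\D^b_{\R c}(\kk)$. Concretely, fix $F \in \D^b_{\R c}(\kk_X)$ with $\chi(F) = \varphi$ (which exists by the preceding theorem), and note that $\supp(F) = \supp(\varphi)$, so $f$ is proper on $\supp(F)$. A standard result of Kashiwara--Schapira \cite[Prop.~8.4.8]{KS90} then ensures that $\Rr f_\ast F \in \D^b_{\R c}(\kk_Y)$.

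I would prove part (2) first, as it is the heart of the statement. For any $y \in Y$, proper base change along the inclusion $\{y\} \hookrightarrow Y$ yields
\[
(\Rr f_\ast F)_y \;\cong\; \Rr \Gamma\bigl(f^{-1}(y);\, F|_{f^{-1}(y)}\bigr).
\]
Since $f$ is proper on $\supp(F)$, the sheaf $F|_{f^{-1}(y)}$ has compact support on $f^{-1}(y)$, so Lemma~\ref{lem:integralsection} applies and gives
\[
\chi(\Rr f_\ast F)(y) \;=\; \chi\bigl(\Rr \Gamma(f^{-1}(y); F|_{f^{-1}(y)})\bigr) \;=\; \int \chi(F|_{f^{-1}(y)}) \;=\; \int \varphi \cdot 1_{f^{-1}(y)} \;=\; (f_\ast \varphi)(y).
\]
This is exactly the identity $\chi(\Rr f_\ast F) = f_\ast \chi(F) = f_\ast \varphi$ asserted in (2).

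Part (1) then follows immediately: the right-hand side $\chi(\Rr f_\ast F)$ is a constructible function on $Y$ since $\Rr f_\ast F \in \D^b_{\R c}(\kk_Y)$, so its equal $f_\ast \varphi$ is constructible as well. For part (3), observe that if $g \circ f$ is proper on $\supp(\varphi)$, then $g$ is proper on $\supp(\Rr f_\ast F) \subseteq f(\supp(\varphi))$, so the composition $\Rr g_\ast \circ \Rr f_\ast = \Rr (g \circ f)_\ast$ of derived direct images is well defined on $\D^b_{\R c}$. Applying (2) three times,
\[
(g \circ f)_\ast \varphi \;=\; \chi\bigl(\Rr (g\circ f)_\ast F\bigr) \;=\; \chi\bigl(\Rr g_\ast \Rr f_\ast F\bigr) \;=\; g_\ast \chi(\Rr f_\ast F) \;=\; g_\ast f_\ast \varphi.
\]

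The main work is concentrated in part (2), and the only nontrivial analytic input there is properness of $f$ on $\supp(F)$, which is needed both for proper base change to give the fiber formula and for $F|_{f^{-1}(y)}$ to have compact support (so that Lemma~\ref{lem:integralsection} applies). Everything else is formal manipulation with the sheaf-function correspondence and functoriality of $\Rr f_\ast$. I expect no real obstacle beyond correctly invoking these standard facts; the content of the theorem is really that the elementary definition of $f_\ast$ on $\CF(X)$ matches the derived direct image under the isomorphism $K(\D^b_{\R c}(\kk_M)) \cong \CF(M)$.
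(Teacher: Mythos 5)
The paper itself does not prove this theorem: it is recalled as background, quoted from \cite[Chapter 9.7]{KS90} and \cite{Sch91}, so your proposal is being compared against the standard argument in those references --- and indeed your route (lift $\varphi$ to a sheaf, use proper base change, conclude with Lemma~\ref{lem:integralsection}, then deduce (1) and (3) formally) is exactly that standard argument. There is, however, one genuine gap in your write-up: the claim $\textnormal{supp}(F)=\textnormal{supp}(\varphi)$ is false in general; one only has $\textnormal{supp}(\varphi)\subseteq\textnormal{supp}(F)$, since $\chi(F_x)$ may vanish while $F_x\neq 0$ (e.g.\ $F=G\oplus G[1]$ has $\chi(F)=0$ but the same support as $G$). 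This equality is load-bearing at every step where you need properness of $f$ on $\textnormal{supp}(F)$: the constructibility of $\Rr f_\ast F$ via \cite[Prop.\ 8.4.8]{KS90}, the base-change isomorphism and the compactness of $\textnormal{supp}(F)\cap f^{-1}(y)$ in part (2), and the containment $\textnormal{supp}(\Rr f_\ast F)\subseteq f(\textnormal{supp}(F))$ (not $f(\textnormal{supp}(\varphi))$) in part (3). The repair is standard but must be said: for (1) and (3) choose $F$ adapted to $\varphi$, namely $F=\bigoplus_\alpha \kk_{X_\alpha}^{|C_\alpha|}$ with shifts coming from a decomposition $\varphi=\sum_\alpha C_\alpha\cdot 1_{X_\alpha}$ with $X_\alpha\subseteq\textnormal{supp}(\varphi)$, so that $\textnormal{supp}(F)\subseteq\textnormal{supp}(\varphi)$ and $f$ is proper on $\textnormal{supp}(F)$; for (2), which quantifies over \emph{all} $F$ with $\chi(F)=\varphi$, the statement must be read, as in \cite[Thm 9.7.3]{KS90}, with the hypothesis that $f$ is proper on $\textnormal{supp}(F)$ --- your argument does not cover an arbitrary such $F$ under properness on $\textnormal{supp}(\varphi)$ alone.

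Two smaller points. First, Lemma~\ref{lem:integralsection} is stated for a real analytic manifold, while $f^{-1}(y)$ is in general a singular subanalytic set; the clean fix is to apply the lemma on $X$ to $F\otimes\kk_{f^{-1}(y)}$, using $\Rr\Gamma\bigl(f^{-1}(y);F|_{f^{-1}(y)}\bigr)\simeq\Rr\Gamma\bigl(X;F\otimes\kk_{f^{-1}(y)}\bigr)$ (the fiber is closed) together with $\chi\bigl(F\otimes\kk_{f^{-1}(y)}\bigr)=\varphi\cdot 1_{f^{-1}(y)}$, whose support is compact by properness on $\textnormal{supp}(F)$. Second, in (3) you should record the elementary verification that properness of $g\circ f$ on $\textnormal{supp}(\varphi)$ implies properness of $f$ on $\textnormal{supp}(\varphi)$ and of $g$ on $f(\textnormal{supp}(\varphi))$, since this is what makes $f_\ast\varphi$ and then $g_\ast f_\ast\varphi$ defined before you invoke $\Rr g_\ast\Rr f_\ast\simeq\Rr(g\circ f)_\ast$. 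With these corrections your proof is complete and coincides with the one the paper implicitly relies on.
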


\subsection{Convolution distance}

We consider a finite dimensional real vector space $\V$ endowed with a norm $\|\cdot\|$. We equip $\V$ with the usual topology. Following \cite{KS18}, we briefly present the convolution distance, which is inspired from the interleaving distance between persistence modules \cite{Chazal2009}. We introduce the following notations: 

\begin{equation*}
	s : \V \times \V \longrightarrow \V, ~~~s(x,y) = x + y
\end{equation*}
\begin{equation*}
	p_i : \V \times \V \longrightarrow \V ~~(i=1,2) ~~~p_1(x,y) = x,~p_2(x,y) = y.
\end{equation*} 

The convolution bifunctor $- \star - \colon \D^b(\kk_\V)\times \D^b(\kk_\V) \longrightarrow \D^b(\kk_\V)$ is defined as follows. For $F, \; G \in \D^b(\kk_\V)$, we set
	\begin{align*}
		F \star G := \Rr s_! (F \boxtimes G).
	\end{align*}

	\noindent For $r \geq 0$ and $x\in \V$, let $B(x,r) = \{v \in \V \mid \|x-v\| \leq r\}$ and $K_r := \kk_{B(0,r)}$. For $r < 0 $, we set $K_r := \kk_{\{x \in \V \mid \, \|x\| < -r\}}[n]$ (where $n$ is the dimension of $\V$). 
	
\noindent	The following proposition is proved in \cite{KS18}.
	
	\begin{proposition}
		
		\label{P:propertiesofconvolution} Let $\varepsilon, \varepsilon'\in \R$ and $F \in \D^b(\kk_\V)$. There are functorial isomorphisms 
		
		\begin{equation*}
			( K_{\varepsilon}  \star K_{\varepsilon'}) \star F \simeq K_{\varepsilon + \varepsilon'} \star F ~~~ and ~~~ K_0 \star F\simeq F.
		\end{equation*}
	\end{proposition}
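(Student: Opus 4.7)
The plan is as follows. Since $-\star F$ preserves isomorphisms, the first claim reduces to the sheaf-level identification $K_\varepsilon \star K_{\varepsilon'} \simeq K_{\varepsilon+\varepsilon'}$; together with $K_0 \star F \simeq F$, these are the two statements I plan to establish.

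For the unit, let $i : \V \hookrightarrow \V \times \V$ be the closed embedding $y \mapsto (0, y)$. Then $\kk_{\{0\}} \boxtimes F \simeq i_! F$ and $s \circ i = \id_\V$, whence
\[K_0 \star F = \Rr s_!(i_! F) \simeq \Rr(s \circ i)_! F = F.\]

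For the multiplicativity, since both $K_\varepsilon$ and $K_{\varepsilon'}$ have compact support, I would apply proper base change to compute stalks as $(K_\varepsilon \star K_{\varepsilon'})_x \simeq \Rr\Gamma_c(A_x, \kk)[d]$, where $d \in \{0, n, 2n\}$ collects the shifts and
\[A_x := \{y \in \V : y \in \mathcal{B}_{\varepsilon},\ x - y \in \mathcal{B}_{\varepsilon'}\},\]
with $\mathcal{B}_r := B(0, r)$ for $r \geq 0$ and $\mathcal{B}_r := \{\|y\| < -r\}$ for $r < 0$. In every case $A_x$ is convex. Three sub-cases arise: if $\varepsilon, \varepsilon' \geq 0$, then $A_x$ is compact convex, non-empty (hence contractible) exactly when $\|x\| \leq \varepsilon + \varepsilon'$, giving $\Rr\Gamma_c(A_x, \kk) \simeq \kk$ on $B(0, \varepsilon+\varepsilon')$ and matching $K_{\varepsilon+\varepsilon'}$; if $\varepsilon, \varepsilon' < 0$, then $A_x$ is a bounded open convex set (homeomorphic to $\R^n$ when non-empty), so $\Rr\Gamma_c(A_x, \kk) \simeq \kk[-n]$ on $\{\|x\| < -(\varepsilon+\varepsilon')\}$, and combined with $d = 2n$ this yields the expected $[n]$-shift of $K_{\varepsilon+\varepsilon'}$.

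The main obstacle is the mixed-sign case, where $A_x$ is locally closed ``half-open'' convex. I would handle it via the excision triangle
\[\Rr\Gamma_c(A_x, \kk) \to \Rr\Gamma_c(\bar A_x, \kk) \to \Rr\Gamma_c(\bar A_x \setminus A_x, \kk) \xrightarrow{+1},\]
and a sub-case analysis: when the ``open-factor ball'' is fully contained in the ``closed-factor ball'', $A_x$ coincides with the former and the computation reduces to the case of two negative parameters; when the balls overlap only partially, both $\bar A_x$ and $\bar A_x \setminus A_x$ (a ``cap''-shaped piece of a sphere intersected with a closed ball) are non-empty compact and contractible, so each has $\Rr\Gamma_c \simeq \kk$, and I would verify that the restriction map is an isomorphism by a straight-line deformation retraction of $\bar A_x$ onto $\bar A_x \setminus A_x$ inside the larger closed ball. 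This forces $\Rr\Gamma_c(A_x, \kk) = 0$, as required for $K_{\varepsilon+\varepsilon'}$ to vanish there. Since both $K_\varepsilon \star K_{\varepsilon'}$ and $K_{\varepsilon+\varepsilon'}$ are constructible sheaves whose stalks agree on the natural stratification induced by the relevant balls, the sheaf isomorphism $K_\varepsilon \star K_{\varepsilon'} \simeq K_{\varepsilon+\varepsilon'}$ follows.
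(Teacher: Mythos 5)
The paper itself gives no proof of this proposition: it is quoted directly from \cite{KS18}. Judged on its own terms, your outline is right about the easy parts (the unit isomorphism via the closed embedding $y\mapsto(0,y)$ is correct and complete, the reduction to the kernel identity $K_\varepsilon\star K_{\varepsilon'}\simeq K_{\varepsilon+\varepsilon'}$ is fine, and the base-change stalk computations in the two equal-sign cases are correct), but there are two genuine gaps. First, your concluding step infers an isomorphism of sheaves from an agreement of stalks, and that inference is invalid: two constructible complexes with the same stalks at every point need not be isomorphic (on $\R$, $\kk_{[0,1)}\oplus\kk_{\{1\}}$ and $\kk_{[0,1]}$ have identical stalks). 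Since the statement moreover claims \emph{functorial} isomorphisms, you must produce a canonical morphism and then check it is a stalkwise isomorphism; for $\varepsilon,\varepsilon'\ge 0$ this can be done via the adjunction map $\kk_{B(0,\varepsilon+\varepsilon')}\to \Rr s_{*}\kk_{B(0,\varepsilon)\times B(0,\varepsilon')}\simeq K_\varepsilon\star K_{\varepsilon'}$ (using that $s$ is proper on the support), but in the mixed and negative cases the construction of the comparison map, with its shifts, is not addressed at all in your write-up.

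Second, the one genuinely delicate point of the computation, the vanishing $\Rr\Gamma_c(A_x;\kk)=0$ in the mixed-sign, partial-overlap regime, is asserted rather than proved. Your proposed ``straight-line deformation retraction of $\bar A_x$ onto $\bar A_x\setminus A_x$'' fails as described: pushing points of $\bar A_x$ radially from the centre $x$ of the open ball out to the sphere can leave the closed ball $B(0,\varepsilon)$ (Euclidean norm in $\R^2$, $\varepsilon=-\varepsilon'=1$, $x=(1/2,0)$: the point $(0.4,0.9)\in A_x$ is sent to a point of norm about $1.07$), so the homotopy does not stay in $\bar A_x$. What your excision triangle actually needs is that the cap $\bar A_x\setminus A_x=B(0,\varepsilon)\cap\{y:\|y-x\|=-\varepsilon'\}$ is nonempty, connected and acyclic; this is true, but it is not automatic and your argument never uses the one hypothesis that makes it work, namely that the two sets are homothetic balls of the \emph{same} norm. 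For arbitrary convex bodies the analogous claim is false (a thin rectangle crossing a disc meets the bounding circle in two disjoint arcs), so some argument specific to norm balls is required. You also silently omit the sub-case $B(0,\varepsilon)\subseteq\{y:\|y-x\|<-\varepsilon'\}$, which occurs when $\varepsilon+\varepsilon'<0$ and is needed to match the stalks of $K_{\varepsilon+\varepsilon'}=\kk_{\{\|y\|<-(\varepsilon+\varepsilon')\}}[n]$; that case is easy, but the two gaps above are exactly the content a complete proof (as in \cite{KS18}) has to supply.
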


	If $\varepsilon \geq \varepsilon' \geq 0 $, there is a canonical morphism 
	$\chi_{\varepsilon, \varepsilon'} \colon K_{\varepsilon}\longrightarrow K_{\varepsilon'}$ in $\D^b(\kk_\V)$. It induces a canonical morphism $\chi_{\varepsilon, \varepsilon'} \star F \colon K_{\varepsilon} \star F \longrightarrow  K_{\varepsilon'} \star F $. 
	In particular when $\varepsilon' = 0$, we get
	\begin{equation}
		\chi_{\varepsilon,0} \star F \colon K_{\varepsilon} \star F \longrightarrow  F.
	\end{equation}
	
	Following \cite{KS18}, we recall the notion of $\varepsilon$-isomorphic sheaves. 
	
	\begin{definition}
		Let $F,G \in \D^b(\kk_\V)$ and let $\varepsilon \geq  0$. The sheaves $F$ and $G$ are $\varepsilon$-isomorphic if there are morphisms $f : K_\varepsilon  \star F \longrightarrow G$ and $g : K_\varepsilon \star G \longrightarrow F$ such that the  diagrams
		
		\begin{align*}
			\xymatrix{ K_{2\varepsilon} \star F \ar[rr]^-{{K_{2\varepsilon}} \star f} \ar@/_2pc/[rrrr]_{{\chi_{2\varepsilon,0}} \star F} && K_{\varepsilon} \star G \ar[rr]^-{g} &&   F
			},\\
			\xymatrix{ K_{2\varepsilon} \star G \ar[rr]^-{{K_{2\varepsilon}} \star g} \ar@/_2pc/[rrrr]_{{\chi_{2\varepsilon,0}} \star G} && K_{\varepsilon} \star F \ar[rr]^-{f} &&   G
			}.
		\end{align*}
		are commutative. The pair of morphisms $(f,g)$ is called a pair of $\varepsilon$-isomorphisms.	
	\end{definition}

\begin{definition}For $F,G \in \D^b(\kk_\V) $, their \textit{convolution distance} is 
	\begin{equation*}
		d_C(F,G) := \inf(\{\varepsilon \geq 0 \mid F ~\textnormal{and}~G~\textnormal{are}~\varepsilon -\textnormal{isomorphic}\} \cup \{ \infty \}).
	\end{equation*}

\end{definition}

\begin{definition}
A pseudo-extended metric on a set $X$ is a map $\delta : X \times X \longrightarrow \R_{\geq 0}\cup \{+\infty\}$ satisfying for all $x,y,z \in X$: $\delta(x,y) \leq \delta (x,z) + \delta (z,y)$. 
\end{definition}

It is proved in \cite{KS18} that the convolution is, indeed, a pseudo-extended metric, that is, it satisfies the triangular inequality. Having isomorphic global sections is a necessary condition for two sheaves to be at finite convolution distance, as expressed in the following proposition, which can be found as \cite[Remark 2.5 (i)]{KS18}. 

\begin{proposition}\label{p:sections}Let $F,G \in \D^b(\kk_\V)$ such that $d_C(F,G) < +\infty$. Then: \[\Rr \Gamma (\V ; F) \simeq \Rr \Gamma (\V;G).\]

\end{proposition}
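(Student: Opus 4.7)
The plan is to reduce the statement to a key lemma: for every $F \in \D^b(\kk_\V)$ and $\varepsilon \geq 0$, the canonical morphism $\chi_{\varepsilon, 0} \star F : K_\varepsilon \star F \to F$ induces an isomorphism $\alpha_F : \Rr\Gamma(\V; K_\varepsilon \star F) \xrightarrow{\sim} \Rr\Gamma(\V; F)$, natural in $F$. Once this is established, the proposition will follow by applying $\Rr\Gamma(\V; -)$ to the two commutative diagrams defining a pair of $\varepsilon$-isomorphisms.

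To prove the lemma, I will exploit that $K_\varepsilon = \kk_{B(0,\varepsilon)}$ has compact support. This makes the sum map $s : \V \times \V \to \V$ proper on $\Supp(K_\varepsilon \boxtimes F) \subseteq B(0,\varepsilon) \times \Supp(F)$, since each fiber $s^{-1}(x) \cap (B(0,\varepsilon) \times \V)$ is homeomorphic to $B(0,\varepsilon)$. Hence $\Rr s_!(K_\varepsilon \boxtimes F) \simeq \Rr s_*(K_\varepsilon \boxtimes F)$, and applying $\Rr\Gamma(\V; -)$ yields
\[
\Rr\Gamma(\V; K_\varepsilon \star F) \simeq \Rr\Gamma(\V \times \V; K_\varepsilon \boxtimes F).
\]
Since $p_2 : \V \times \V \to \V$ is likewise proper on this support, the projection formula and proper base change give $\Rr p_{2*}(K_\varepsilon \boxtimes F) \simeq \Rr\Gamma(B(0,\varepsilon); \kk) \otimes F \simeq F$, using that $B(0,\varepsilon)$ is compact and contractible. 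The identification with $\Rr\Gamma(\chi_{\varepsilon, 0} \star F)$ then comes from the analogous computation for $K_0 = \kk_{\{0\}}$ and the fact that $\Rr\Gamma(\V; \chi_{\varepsilon, 0}) : \kk \to \kk$ is the identity.

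Given $d_C(F, G) < +\infty$, choose $\varepsilon \geq 0$ and morphisms $f : K_\varepsilon \star F \to G$ and $g : K_\varepsilon \star G \to F$ forming a pair of $\varepsilon$-isomorphisms. Define
\[
\phi := \Rr\Gamma(\V; f) \circ \alpha_F^{-1}, \qquad \psi := \Rr\Gamma(\V; g) \circ \alpha_G^{-1}.
\]
Applying $\Rr\Gamma(\V; -)$ to the first defining diagram, and using the naturality of $\alpha$ together with the factorisation of $\chi_{2\varepsilon, 0}$ as $\chi_{\varepsilon, 0} \star \chi_{\varepsilon, 0}$ under $K_{2\varepsilon} \simeq K_\varepsilon \star K_\varepsilon$, one obtains $\psi \circ \phi = \id_{\Rr\Gamma(\V; F)}$; the second diagram symmetrically yields $\phi \circ \psi = \id_{\Rr\Gamma(\V; G)}$. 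So $\phi$ is the desired isomorphism.

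The main technical point, and where I expect most care to be needed, is the naturality check in the lemma: verifying that the Künneth-type identification is genuinely induced by $\chi_{\varepsilon, 0} \star F$ and is functorial in $F$. Once this compatibility is made precise, the remainder of the argument is a short diagram chase.
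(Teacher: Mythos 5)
Your argument is correct in substance, but note that the paper does not prove this proposition at all: it is quoted from \cite[Remark 2.5 (i)]{KS18}, so there is no in-text proof to compare against. What you have written is essentially the standard argument behind that remark, and it works: for $G$ with compact support the sum map $s$ and the projection $p_2$ are proper on $\Supp(G\boxtimes F)$, and the chain of identifications you describe yields a natural isomorphism
\begin{equation*}
\Rr\Gamma(\V; G\star F)\;\simeq\;\Rr\Gamma_c(\V;G)\otimes_\kk \Rr\Gamma(\V;F),
\end{equation*}
functorial in both the compactly supported variable $G$ and in $F$. Packaging your key lemma this way is also the cleanest resolution of the compatibility issue you flag at the end: naturality in $G$ applied to $\chi_{\varepsilon,0}\colon K_\varepsilon\to K_0$ reduces everything to the observation that $\Rr\Gamma_c(\V;K_\varepsilon)\to\Rr\Gamma_c(\V;K_0)$ is the isomorphism $\kk\to\kk$ (both balls being compact and contractible), while naturality in $F$ gives exactly the squares needed for the diagram chase with $f$, $g$ and the factorisation $\chi_{2\varepsilon,0}=\chi_{\varepsilon,0}\star\chi_{\varepsilon,0}$; so the deferred verification is routine rather than a gap. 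One small imprecision: having fibers homeomorphic to $B(0,\varepsilon)$ does not by itself give properness of $s$ on $B(0,\varepsilon)\times\V$ (compact fibers do not imply properness); the correct justification is that the preimage of a compact set $C$ is a closed subset of $B(0,\varepsilon)\times\bigl(C-B(0,\varepsilon)\bigr)$, hence compact. With that fixed, your $\phi$ and $\psi$ are mutually inverse and the proposition follows.
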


Moreover, it satisfies the following important stability property.

\begin{theorem} \label{th:stability}
Let $u,v : X \longrightarrow \V$ be continuous maps, and let $F \in \D^b(\kk_\V)$. Then,

\[d_C(\Rr u_\ast F, \Rr v_\ast F) \leq \sup_{x \in X} \|u(x) - v(x)\|.\]
\end{theorem}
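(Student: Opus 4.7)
The plan is to exhibit explicit $\varepsilon$-isomorphism morphisms for $\varepsilon := \sup_{x\in X}\|u(x)-v(x)\|$. If $\varepsilon = +\infty$ the inequality is trivial, so assume $\varepsilon < +\infty$; I also work under the standard properness hypothesis making $\Rr u_*, \Rr v_*$ coincide with $\Rr u_!, \Rr v_!$ so that proper base change applies, and treat $F$ as a sheaf on $X$.

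First I give a geometric model for the convolution. Let $B := B(0,\varepsilon) \subset \V$, $A := X \times B$, with projection $\pi \colon A \to X$ and addition map $\phi_w \colon A \to \V$, $(x,b) \mapsto w(x)+b$, attached to any continuous $w \colon X \to \V$. Proper base change and the projection formula applied to $K_\varepsilon \star \Rr w_! F = \Rr s_!(K_\varepsilon \boxtimes \Rr w_! F)$ yield a canonical isomorphism $K_\varepsilon \star \Rr w_! F \simeq \Rr \phi_{w!}\, \pi^{-1} F$. Setting $h := v-u$, the bound $\|h(x)\| \le \varepsilon$ ensures the graph $\tau_h \colon X \to A$, $x \mapsto (x,h(x))$, is a closed embedding with $\pi \circ \tau_h = \id_X$ and $\phi_u \circ \tau_h = v$. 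The adjunction unit $\pi^{-1} F \to \tau_{h!} F$ then produces, after $\Rr \phi_{u!}$, a morphism $f \colon K_\varepsilon \star \Rr u_! F \to \Rr v_! F$; the symmetric embedding $\tau_{-h}$ furnishes $g \colon K_\varepsilon \star \Rr v_! F \to \Rr u_! F$.

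The heart of the argument, which I expect to be the main obstacle, is verifying the two diagrams in the definition of an $\varepsilon$-isomorphism. A second application of base change gives $K_\varepsilon \star K_\varepsilon \star \Rr u_! F \simeq \Rr \mu_!\, p^{-1} F$ on $A_2 := X \times B \times B$, with $\mu(x,b_1,b_2) := u(x)+b_1+b_2$ and $p$ the projection to $X$. Unwinding definitions, the composite $g \circ (K_\varepsilon \star f)$ is produced by the adjunction unit attached to the closed section $\sigma \colon X \hookrightarrow A_2$, $x \mapsto (x,h(x),-h(x))$, whereas under the canonical isomorphism $K_\varepsilon \star K_\varepsilon \simeq K_{2\varepsilon}$ (coming from the sum map $B \times B \to B(0,2\varepsilon)$) the map $\chi_{2\varepsilon,0} \star \Rr u_! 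F$ is produced by the zero-section $x \mapsto (x,0,0)$. Both sections satisfy $\mu \circ (\cdot) = u$ and factor through the antidiagonal closed embedding $\sigma_{\mathrm{ad}} \colon X \times B \hookrightarrow A_2$, $(x,b) \mapsto (x,b,-b)$, on which $\mu$ restricts to $u \circ \pi_X$, where $\pi_X \colon X \times B \to X$ is the projection.

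After applying $\Rr u_!$, both morphisms thus reduce to a natural map $\Rr \pi_{X!}\, \pi_X^{-1} F \to F$: for $\chi_{2\varepsilon,0} \star \Rr u_! F$ this is the projection-formula isomorphism induced by $\Rr \pi_{X!}\, \kk_{X \times B} \simeq \kk_X$, which is valid because $B$ is compact and contractible; for the composite, it is the same isomorphism precomposed with the adjunction unit attached to the graph section $(x,h(x))$ of $\pi_X$. A stalkwise check shows these agree: on the contractible fiber $\{x\} \times B$, the pullback $\pi_X^{-1} F$ is the constant sheaf with value $F_x$, and its evaluation at any point of $B$, in particular at $h(x)$, realizes the canonical identification $\Rr \Gamma_c(B;\kk) \simeq \kk$. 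The second diagram is handled identically by swapping the roles of $u$ and $v$.
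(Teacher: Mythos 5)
First, a point of comparison: the paper does not prove Theorem \ref{th:stability} at all --- it is quoted from \cite{KS18} --- so I am measuring your argument against the standard proof there. Your construction is essentially that proof: model $K_\varepsilon \star \Rr w_! F$ as $\Rr \phi_{w!}\opb{\pi}F$ on $X\times B(0,\varepsilon)$ via base change and the projection formula, produce the interleaving morphisms from the closed graph sections $x\mapsto (x,\pm h(x))$, and verify the two triangles on $X\times B\times B$ by comparing the section $(x,h(x),-h(x))$ with the zero section through the antidiagonal, using that the ball is compact, convex and connected, so that $\Rr\Gamma(B(0,\varepsilon);\kk)\simeq \kk$ and evaluation at any of its points induces the same identification. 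This core is correct (modulo the routine compatibilities of the base-change and projection-formula isomorphisms that you only sketch, which is acceptable), and it is the same route as Kashiwara--Schapira's.

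The genuine gap is your blanket ``standard properness hypothesis''. The theorem as stated concerns $\Rr u_*$ and $\Rr v_*$ for \emph{arbitrary} continuous maps; no properness is assumed, and the result holds in that generality. You use properness only to replace $\Rr u_*$ by $\Rr u_!$; the rest of your argument never needs $u$ or $v$ proper, since proper base change for $\Rr s_!$ and $\Rr\phi_{w!}$ is valid for arbitrary continuous maps of locally compact spaces. So what you actually establish is the $!$-version for arbitrary continuous maps, together with the $*$-version only when $u,v$ are proper on $\textnormal{supp}(F)$ --- strictly weaker than the printed statement. To cover $\Rr u_*$ in general, observe that $s$ is proper on $B(0,\varepsilon)\times\V$, so $K_\varepsilon\star G\simeq \Rr s_*(\kk_{B(0,\varepsilon)}\boxtimes G)$ for every $G$; combined with the isomorphism $\kk_{B(0,\varepsilon)}\boxtimes \Rr u_*F\simeq \Rr(\id_{B(0,\varepsilon)}\times u)_*(\kk_{B(0,\varepsilon)}\boxtimes F)$ --- which follows from homotopy invariance and the compactness of the ball, not from proper base change --- one obtains $K_\varepsilon\star\Rr u_*F\simeq \Rr\phi_{u*}\opb{\pi}F$, after which your construction and the triangle verification go through verbatim with $*$ in place of $!$ (the graph sections are closed embeddings, so their $*$- and $!$-pushforwards coincide). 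Without this supplement, the theorem as stated is not fully proved.
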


We will often make use of the following result, that we call the additivity of interleavings, which is a direct consequence of the additivity of the convolution functor.

\begin{proposition}[Additivity of interleavings] \label{p:additivityinterleavings}
Let $(F_i)_{i \in I}$ and $(G_j)_{j\in J}$ be two finite families of $\D^b(\kk_\V)$. For all $I' \subseteq I$ and $J' \subseteq J$ of the same cardinality (eventually empty), and for all bijections $\sigma : I' \longrightarrow J'$, one has: 

\[d_C(\oplus_{i\in i} F_i, \oplus_{j\in J} G_j) \leq \max \left ( \max_{i \in I'} d_C(F_i, G_{\sigma(i)}), \max_{i\in I\backslash I'}d_C(F_i,0),  \max_{j\in J\backslash J'}d_C(G_j,0)) \right).\]
\end{proposition}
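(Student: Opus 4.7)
The plan is to reduce the statement to the additivity of the convolution bifunctor $- \star -$. Denote by $\varepsilon$ the quantity on the right-hand side of the claimed inequality. If $\varepsilon = +\infty$ the bound is vacuous, so I assume $\varepsilon < +\infty$ and fix an arbitrary $\eta > 0$. By definition of the convolution distance, I can choose, for every $i \in I'$, a pair $(f_i, g_i)$ of $(\varepsilon+\eta)$-isomorphisms between $F_i$ and $G_{\sigma(i)}$; for every $i \in I \setminus I'$, a pair of $(\varepsilon+\eta)$-isomorphisms between $F_i$ and $0$; and for every $j \in J \setminus J'$, a pair between $G_j$ and $0$.

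Setting $F = \bigoplus_{i \in I} F_i$ and $G = \bigoplus_{j \in J} G_j$, the next step is to assemble these local data into a pair of $(\varepsilon+\eta)$-isomorphisms between $F$ and $G$. The key ingredient is that the external tensor product $\boxtimes$ and the functor $\Rr s_!$ both commute with finite direct sums, so $- \star -$ is additive in each variable. In particular there are canonical isomorphisms
\[ K_{\varepsilon+\eta} \star F \simeq \bigoplus_{i \in I} \bigl( K_{\varepsilon+\eta} \star F_i \bigr), \qquad K_{\varepsilon+\eta} \star G \simeq \bigoplus_{j \in J} \bigl( K_{\varepsilon+\eta} \star G_j \bigr). \]
Using these, I would define $f \colon K_{\varepsilon+\eta} \star F \to G$ by taking $f_i$ on the $i$-th summand (landing in the $\sigma(i)$-th summand of $G$ when $i \in I'$, and in the zero summand when $i \notin I'$) and the zero morphism on the factors of $G$ indexed by $J \setminus J'$; and symmetrically for $g \colon K_{\varepsilon+\eta} \star G \to F$.

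It then remains to check that $(f, g)$ is a pair of $(\varepsilon+\eta)$-isomorphisms. Both composites $K_{2(\varepsilon+\eta)} \star F \to K_{\varepsilon+\eta} \star G \to F$, as well as the canonical morphism $\chi_{2(\varepsilon+\eta),\,0} \star F \colon K_{2(\varepsilon+\eta)} \star F \to F$, decompose, through the additivity isomorphisms above, into direct sums of the corresponding morphisms for the individual pairs. Hence the two required diagrams commute summand-by-summand, which gives $d_C(F, G) \leq \varepsilon + \eta$; letting $\eta \to 0$ concludes.

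The main obstacle is not conceptual but notational: one needs to verify that $\chi_{2(\varepsilon+\eta),\,0} \star (-)$ is compatible with the additivity isomorphism $K_\varepsilon \star \bigl(\bigoplus_i F_i\bigr) \simeq \bigoplus_i (K_\varepsilon \star F_i)$. This compatibility follows from the naturality of the canonical morphism $\chi_{\varepsilon, \varepsilon'}$ with respect to the sheaf argument, which is precisely why the author describes the proposition as a \emph{direct consequence} of additivity of the convolution functor.
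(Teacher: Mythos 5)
Your argument is correct and is exactly the intended one: the paper gives no written proof, merely noting the proposition is a direct consequence of the additivity of $- \star -$ (since $\boxtimes$ and $\Rr s_!$ commute with finite direct sums), and your proposal simply spells out the standard details — choosing $(\varepsilon+\eta)$-isomorphisms for matched summands and with $0$ for unmatched ones, assembling them componentwise through the additivity isomorphisms, and checking the interleaving diagrams summand-by-summand before letting $\eta \to 0$. No gaps; the only implicit ingredient, that $\varepsilon$-isomorphic objects are $\varepsilon''$-isomorphic for $\varepsilon'' \geq \varepsilon$, is standard from \cite{KS18} and used correctly.
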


\begin{proof}
Let $I' \subseteq I$ and $J' \subseteq J$ of the same cardinality (eventually empty), and  $\sigma : I' \longrightarrow J'$ a bijection. We set \[M = \max \left ( \max_{i \in I'} d_C(F_i, G_{\sigma(i)}), \max_{i\in I\backslash I'}d_C(F_i,0),  \max_{j\in J\backslash J'}d_C(G_j,0)) \right).\] If $M = + \infty$, the inequality is true. Let us now assume that $M < + \infty$. Let $\varepsilon > M$. Then for all $i \in I \backslash I'$, $F_i$ is $\varepsilon$-interleaved with $0$, so the canonical map $F_i \star K_{2\varepsilon} \longrightarrow F_i$ is zero. Similarly,  for all $j \in J\backslash J'$, $G_j$ is $\varepsilon$-interleaved with $0$, so the canonical map $G_j \star K_{2\varepsilon} \longrightarrow G_j$ is zero. Moreover, for all $i \in I'$, there exists a pair of $\varepsilon$-interleavings morphisms $f_i : F_i \star K_\varepsilon \longrightarrow G_{\sigma(i)}$ and $g_i : G_{\sigma(i)} \star K_\varepsilon \longrightarrow F_i$. 

Since $(\oplus_{i \in I} F_i) \star K_\varepsilon \simeq \oplus_{i \in I} (F_i \star K_\varepsilon)$ and $(\oplus_{j \in J} G_j) \star K_\varepsilon \simeq \oplus_{j \in J} (G_j \star K_\varepsilon)$, we can define $f : \oplus_{i \in I} F_i \star K_\varepsilon \longrightarrow \oplus_{j \in J} G_{j}$ and $g : \oplus_{j \in J} G_j \star K_\varepsilon \longrightarrow \oplus_{i \in I} F_i$ by:

\[\forall i \in I', f_{|F_i \star K_\varepsilon} = f_i ~~~\textnormal{and} ~~~g_{|G_{\sigma(i) }\star K_\varepsilon} = g_{i}, \]

\[\forall i \in I \backslash I', f_{|F_i \star K_\varepsilon} = 0,~~~\textnormal{and} ~~~ \forall j \in J \backslash J', g_{|G_j \star K_\varepsilon} = 0.\]

\noindent Let us verify that $(f,g)$ is an $\varepsilon$-interleaving pair between $\oplus_{i\in i} F_i$ and $\oplus_{j\in J} G_j$. For all $i \in I'$, one has:

\begin{align*}
(g \circ f \star K_\varepsilon)_{|F_i \star K_{2\varepsilon}} &= g \circ f_i\star K_\varepsilon \\
&= g_{i} \circ f_i\star K_\varepsilon \\ 
&= \chi_{2\varepsilon,0}\star F_i \quad \textnormal{($(f_i,g_i)$ is an $\varepsilon$-interleaving pair)}\\
&= (\chi_{2\varepsilon,0}\star F)_{|F_i \star K_{2\varepsilon}}.
\end{align*}    

\noindent Also for all $i\in I \backslash I'$, one has:

\begin{align*}
(g \circ f \star K_\varepsilon)_{|F_i \star K_{2\varepsilon}} &= 0 \\
&= \chi_{2\varepsilon,0}\star F_i \quad \textnormal{($F_i$ is $\varepsilon$-interleaved with $0$)}\\
&= (\chi_{2\varepsilon,0}\star F)_{|F_i \star K_{2\varepsilon}}.
\end{align*}  

\noindent Therefore, for all $i \in I$, one has $(g \circ f \star K_\varepsilon)_{|F_i \star K_{2\varepsilon}} = (\chi_{2\varepsilon,0}\star F)_{|F_i \star K_{2\varepsilon}}$, which implies that $g \circ f \star K_\varepsilon = \chi_{2\varepsilon,0}\star F$. A similar computation yields $f \circ g \star K_\varepsilon = \chi_{2\varepsilon,0}\star G$. Thus, $(f,g)$ is indeed an $\varepsilon$-interleaving pair.

By taking the infimum over $\varepsilon > M$, we get the desired inequality.

\end{proof}

\subsection{PL-sheaves and functions}

We consider a finite dimensional real vector space $\V$ endowed with a norm $\|\cdot\|$. We equip $\V$ with the topology induced by the norm $\|\cdot\|$, and $\D^b(\kk_\V)$ with the convolution distance $d_C$ associated to $\|\cdot\|$. The notion of Piecewise-Linear sheaves was introduced by Kashiwara-Schapira in \cite{KS18}. 

\begin{definition}
A convex polytope $P$ in $\V$, is the intersection of a finite family of open or closed affine half-spaces.
\end{definition}

\begin{definition}
A sheaf $F \in \D_{\R c}^b(\kk_\V)$ is Piecewise Linear (PL) if there exists a locally-finite family $(P_a)_{a\in A}$ of locally closed convex polytopes covering $\V$, such that $F_{|P_a}$ is locally constant and of finite rank for all $a\in A$.
\end{definition}

We shall denote by $\D^b_{\textnormal{PL}}(\kk_\V)$ the full subcategory of $\D^b(\kk_\V)$ consisting of PL sheaves. The following approximation theorem is proved in \cite{KS18}.

\begin{theorem}\label{th:PLapprox}
Let $F \in \D^b_{\R c}(\kk_\V)$ and $C \in \Z_{\geq 0}$ such that for all $|i| > C$, one has $\Ho^i(F) \simeq 0$. Then for any  $\varepsilon > 0$, there exists a sheaf $F_\varepsilon \in \D^b_{\textnormal{PL}}(\kk_\V)$ satisfying:

\begin{enumerate}
    \item $d_C(F, F_\varepsilon) \leq \varepsilon$,
    
    \item $\textnormal{supp}(F_\varepsilon) \subset \textnormal{supp}(F) + B(0,\varepsilon)$,
    
    \item $\Ho^i(F_\varepsilon) \simeq 0$, for all $|i| > C + \mathrm{dim}(\V) + 1$.
\end{enumerate}
\end{theorem}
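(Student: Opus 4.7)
The plan is to build $F_\varepsilon$ as the pushforward of $F$ along a homeomorphism of $\V$ that is uniformly $\varepsilon$-close to the identity and that straightens a subanalytic triangulation adapted to $F$ into a piecewise-linear one. The control on the convolution distance then follows directly from the stability theorem (Theorem \ref{th:stability}) applied to the chosen homeomorphism and the identity.

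Using the triangulation theorem for subanalytic sets, together with a $\mu$-stratification of $\V$ adapted to $F$, we obtain a locally finite subanalytic triangulation $\mathcal{T}$ of $\V$ on each of whose open simplices every cohomology sheaf $\Ho^j(F)$ is locally constant of finite rank. Choose $R$ with $\textnormal{supp}(F) + B(0,\varepsilon) \subset B(0,R)$, and refine $\mathcal{T}$ so that every simplex meeting $B(0,R)$ has diameter smaller than $\varepsilon$, while outside $B(0,2R)$ the triangulation is already linear (for example a simplicial fan from the origin). Replacing each closed subanalytic simplex by the affine simplex on the same vertex set produces a PL triangulation $\mathcal{T}'$ with the same underlying simplicial complex, and simplex-wise linear interpolation yields a homeomorphism $u \colon \V \to \V$ sending each simplex of $\mathcal{T}'$ onto the corresponding simplex of $\mathcal{T}$, equal to the identity outside $B(0,2R)$. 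Since corresponding simplices share their vertices and have diameter less than $\varepsilon$ inside $B(0,2R)$, we get $\sup_{x\in\V}\|u(x)-x\| \leq \varepsilon$.

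Setting $F_\varepsilon := \Rr u^{-1}_\ast F$, the map $u^{-1}$ sends each open simplex of $\mathcal{T}$ homeomorphically onto the corresponding open simplex of $\mathcal{T}'$, so $F_\varepsilon$ is locally constant of finite rank on each cell of $\mathcal{T}'$ and hence lies in $\D^b_{\textnormal{PL}}(\kk_\V)$. Every $x \in \textnormal{supp}(F_\varepsilon)$ satisfies $u(x) \in \textnormal{supp}(F)$ with $\|u(x)-x\| \leq \varepsilon$, which yields the support inclusion. Finally, Theorem \ref{th:stability} applied to the pair of continuous maps $u^{-1}$ and $\id_\V$ gives
\[ d_C(F_\varepsilon, F) = d_C(\Rr u^{-1}_\ast F, \Rr (\id_\V)_\ast F) \leq \sup_{x\in\V}\|u^{-1}(x)-x\| \leq \varepsilon. \]

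The main obstacle I anticipate is the global construction of $u$ with uniformly controlled displacement. Straightening a subanalytic triangulation to a PL one on compact pieces is a classical consequence of the triangulability of subanalytic sets, but globalising it on $\V$ requires care at infinity: arranging $\mathcal{T}$ to be already PL outside a sufficiently large ball enclosing a neighborhood of $\textnormal{supp}(F)$ lets $u$ be taken as the identity there, which is what allows the supremum of $\|u(x)-x\|$ over the whole of $\V$ to be finite and bounded by $\varepsilon$. Everything else---the local constancy of $F_\varepsilon$ on the PL cells and the support bound---is formal once such a $u$ has been produced.
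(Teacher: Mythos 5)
The paper does not prove this statement at all --- it is quoted from Kashiwara--Schapira (cited as \cite{KS18}), so the only comparison available is with their argument, whose spirit (triangulate, approximate by something piecewise affine, conclude by the stability Theorem \ref{th:stability}) you have correctly identified. However, your write-up has a genuine gap at its central step: you assert that replacing each closed subanalytic simplex of $\mathcal{T}$ by the affine simplex on the same vertex set yields a PL triangulation $\mathcal{T}'$ of $\V$, and that simplexwise linear interpolation gives a homeomorphism $u$ of $\V$. Neither claim is justified, and neither is automatic: the images of the simplices of a subanalytic triangulation are only topologically embedded ``curved'' simplices, and the affine simplices spanned by the same vertices can be degenerate, can overlap each other, and need not cover $\V$, so $\mathcal{T}'$ need not be a simplicial complex and $u$ need not be well defined, injective or surjective. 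Making a merely continuous (subanalytic) triangulation PL by a small ambient homeomorphism is a Hauptvermutung-type statement (in the subanalytic/o-minimal setting it is a nontrivial theorem of Shiota), not something one gets for free by taking fine subdivisions. The subsidiary claim that $\mathcal{T}$ can be arranged to be literally linear outside a large ball, matching the subanalytic triangulation along the interface, is also asserted without construction.

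The standard way to repair this --- and essentially what Kashiwara--Schapira do --- is to drop the requirement that the straightening be a homeomorphism of $\V$. Take the triangulation as a subanalytic homeomorphism $h\colon |K|\to\V$ adapted to $F$, subdivide so that each $h(\sigma)$ has diameter at most $\varepsilon$ (this can be done simplex by simplex, non-uniformly, keeping local finiteness), and let $h'\colon |K|\to\V$ be the map that agrees with $h$ on vertices and is affine on each simplex; then $\sup_{y\in|K|}\|h(y)-h'(y)\|\leq\varepsilon$ even though $h'$ may fail to be injective. With $G:=h^{-1}F$, one has $\Rr h_\ast G\simeq F$, the sheaf $F_\varepsilon:=\Rr h'_\ast G$ is PL (direct image of a simplicially constructible sheaf under a proper PL map, properness following from closeness to the proper map $h$), the support estimate holds because $h'(\mathrm{supp}\,G)\subset h(\mathrm{supp}\,G)+B(0,\varepsilon)$, and Theorem \ref{th:stability} applied to the two maps $h,h'\colon |K|\to\V$ gives $d_C(F,F_\varepsilon)\leq\varepsilon$. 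Your use of the stability theorem, the PL-ness of the pushforward, and the support bound are all fine once this is in place; the missing ingredient is precisely the existence of the straightening, which your affine-interpolation homeomorphism does not deliver.
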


\begin{proof}
$(1)$ and $(2)$ are \cite[Theorem 2.11]{KS18}. For $(3)$, we have to use the construction of the proof of \cite[Theorem 2.11]{KS18}. More precisely, the authors construct a simplicial complex $(S, \Delta)$ such that there is an homeomorphism $f : |S| \overset{\sim}{\longrightarrow} \V$ and a PL continuous map $g : |S| \longrightarrow \V$, such that $F \simeq \Rr f_\ast f^{-1} F $ and $F_{\varepsilon} \simeq \Rr g_\ast f^{-1} F$. We conclude by observing that the flabby dimension of $\V$ (hence of $|S|$) is $\dim(\V) + 1$ \cite[Exercise III.2]{KS90}.
\end{proof}

Following \cite{Leb21}, we introduce the PL counterpart of constructible functions.

\begin{definition}
A function $\varphi : \V \longrightarrow \Z$ is PL-constructible, if there exists a locally-finite covering $\V = \bigcup_{a\in A} P_a$ by locally closed convex polytopes, such that $\varphi$ is constant on each $P_a$.
\end{definition}

We denote by $\CF_\textnormal{PL}(\V)$ the group of PL-constructible functions on $\V$. 

\begin{proposition}[\cite{Leb21}]
    Any $\varphi \in \CF_\textnormal{PL}(\V)$ with compact support can be written as a finite sum $\varphi = \sum_\lambda C_\lambda \cdot 1_{    X_\lambda}$, where $X_\lambda$ is a compact convex polytope, and $C_\lambda \in \Z$.
\end{proposition}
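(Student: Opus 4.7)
My plan is to reduce the statement in three steps: first use compactness to make the defining sum finite, then take closures to produce compact convex polytopes, and finally apply inclusion-exclusion to handle the faces that were removed.

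To begin, I would unfold the definition of $\CF_\textnormal{PL}(\V)$ to obtain a locally finite covering $\V = \bigcup_{a \in A} P_a$ by locally closed convex polytopes on which $\varphi$ is constant with value $c_a \in \Z$. Because $\textnormal{supp}(\varphi)$ is compact and the covering is locally finite, only finitely many $P_a$ meet $\textnormal{supp}(\varphi)$; on the others one has $c_a = 0$, and each relevant $P_a$ is bounded. This yields a \emph{finite} expression $\varphi = \sum_{a} c_a \cdot 1_{P_a}$ indexed by bounded locally closed convex polytopes.

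Next, for each such $P$, I would set $Q = \bar{P}$ (a compact convex polytope) and observe that, using a presentation of $P$ as the intersection of finitely many closed and finitely many strictly open half-spaces, the complement $Q \setminus P$ is a finite union $\bigcup_{j \in J} F_j$ of closed faces of $Q$. Since intersections of faces of $Q$ are again faces of $Q$, and hence compact convex polytopes, inclusion-exclusion gives
\[ 1_P = 1_Q - \sum_{\emptyset \neq S \subseteq J} (-1)^{|S|+1} \, 1_{\bigcap_{j \in S} F_j}, \]
an integer linear combination of indicators of compact convex polytopes. Substituting this back into the finite sum from the first step and regrouping terms then produces a decomposition $\varphi = \sum_\alpha C_\alpha \cdot 1_{X_\alpha}$ with $C_\alpha \in \Z$ and $X_\alpha$ compact convex polytope; finally one discards the terms with $C_\alpha = 0$.

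The step I expect to require the most care is the combinatorial identification of $Q \setminus P$ with a finite union of closed faces of $Q$: it should follow from writing $P = \bigcap_i \{a_i \leq b_i\} \cap \bigcap_j \{a'_j < b'_j\}$, so that $Q = \bigcap_i \{a_i \leq b_i\} \cap \bigcap_j \{a'_j \leq b'_j\}$ and $Q \setminus P = \bigcup_j \bigl(Q \cap \{a'_j = b'_j\}\bigr)$ is visibly a finite union of faces of $Q$. Once this identification is in place, the remainder is a mechanical expansion with integer coefficients, and one only has to verify that the locally finite indexing in the definition of $\CF_\textnormal{PL}(\V)$ is compatible with such a representation, which is standard.
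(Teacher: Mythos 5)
Your second and third steps (passing to the closure $Q=\bar P$, identifying $Q\setminus P$ with a finite union of exposed faces, and expanding by inclusion--exclusion) are correct: for a nonempty $P=\bigcap_i\{a_i\le b_i\}\cap\bigcap_j\{a'_j<b'_j\}$ a standard segment argument shows $\bar P=\bigcap_i\{a_i\le b_i\}\cap\bigcap_j\{a'_j\le b'_j\}$, and then $Q\setminus P=\bigcup_j\bigl(Q\cap\{a'_j=b'_j\}\bigr)$ as you say. The genuine gap is in your first step. The definition of $\CF_\textnormal{PL}(\V)$ only provides a locally finite \emph{covering} $\V=\bigcup_a P_a$ on which $\varphi$ is constant, not a partition, so the identity $\varphi=\sum_a c_a\cdot 1_{P_a}$ from which everything else proceeds is false in general: overlaps are double counted. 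For instance on $\R$, take $\varphi=1_{[0,2]}$ and the covering by $[0,1]$, $[1/2,2]$, $(-\infty,0)$, $(2,+\infty)$; $\varphi$ is constant on each piece, yet $1_{[0,1]}+1_{[1/2,2]}$ equals $2$ on $[1/2,1]$. Since differences of convex polytopes are not convex, replacing the covering by a partition is not a purely formal manipulation, and your closing remark about ``compatibility of the locally finite indexing'' does not address this point. (A smaller imprecision: a $P_a$ meeting $\textnormal{supp}(\varphi)$ need not be bounded; only those on which $\varphi$ takes a nonzero value are, since they then lie inside the compact support.)

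The gap is fixable by a standard refinement, after which your argument goes through verbatim. Choose a closed box $K$ with $\textnormal{supp}(\varphi)\subset\textnormal{Int}(K)$. By local finiteness only finitely many $P_a$ meet $K$, and each of them is a finite intersection of open or closed half-spaces; let $\mathcal{H}$ be the finite collection of all bounding hyperplanes of these half-spaces together with the hyperplanes of the facets of $K$. The relatively open cells of the arrangement $\mathcal{H}$ are locally closed convex polytopes, and each cell is either contained in or disjoint from every half-space bounded by a member of $\mathcal{H}$, hence contained in or disjoint from $K$ and from each of the finitely many relevant $P_a$. Consequently the cells contained in $K$ form a finite partition of $K$ into bounded locally closed convex polytopes, each contained in some $P_a$, so $\varphi$ is constant on each of them; since $\textnormal{supp}(\varphi)\subset K$ this yields an honest finite decomposition $\varphi=\sum_{C\subseteq K}\varphi(C)\cdot 1_C$ over pairwise disjoint pieces. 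Applying your closure and inclusion--exclusion step to each such $C$ then gives the asserted representation, and discarding zero coefficients finishes the proof.
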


\section{Main result}

Let $(\V, \| \cdot \|)$ be a finite dimensional normed real vector space. We endow $\D^b(\kk_\V)$ with the associated convolution distance $d_C$ \cite{KS18}. 

\begin{definition}
Let $\mathscr{C}$ be an abelian category. A sequence of objects $(X_n)_{n \geq 0}$ of $\D^b(\mathscr{C})$ is said to be uniformly bounded, if there exists an integer $C \geq 0$ such that for all $|i| > C$, one has for all $n \geq 0$, $\Ho^i (X_n) \simeq 0$.
\end{definition}

Let $\delta$ be a pseudo-extended metric on $\CF (\V)$.

\begin{definition}

The pseudo-extended metric $\delta$ is said to be $d_C$-\emph{dominated} if for all uniformly bounded sequences $(F_n) \in \D^b_{\R c}(\kk_\V)$ of compactly supported sheaves, and $F \in \D^b_{\R c}(\kk_\V)$ with compact support, one has:

\[d_C(F,F_n) \underset{n \longrightarrow +\infty}{\longrightarrow} 0  ~~\Longrightarrow \delta(\chi(F),\chi(F_n)) ~~ \underset{n \longrightarrow +\infty}{\longrightarrow} 0. \]

\end{definition}

It shall be noted that by Proposition \ref{p:sections} and Lemma \ref{lem:integralsection}, the condition $d_C(F,G) < +\infty$  implies that $\int \chi(F) \dk= \int \chi(G) \dk$. Our aim is to characterize all $d_C$-dominated pseudo-extended metrics on $\CF (\V)$. This will be achieved in Theorem \ref{th:main}. In all this section, $\delta$ designates a $d_C$-dominated pseudo-extended metric on $\CF (\V)$.

Our strategy is to prove that for any $\varphi \in \CF(\V)$ with compact support, it is possible to concentrate the "mass" of $\varphi$ on one single point, that is $\delta(\varphi, (\int \varphi \dk)\cdot 1_{\{0\}}) = 0$. To do so, we first assume that $\varphi$ is PL-constructible, which allows us to use rather straightforward arguments instead of sophisticated one from subanalytic geometry. We then generalize to any stratification thanks to Kashiwara-Schapira's approximation Theorem \ref{th:PLapprox}.

In Section \ref{section:flag}, we introduce the notion of $\varepsilon$-flag, which is a nested sequence of convex compact sets, and allows us to successively concentrate the mass of an indicator PL-function onto one single point. This allows us to treat the PL-case in Section \ref{section:PLcase}, and the general one in Section \ref{section:generalcase}

\subsection{Convolution distance of the difference of compact convex subsets} \label{section:flag}

Recall that for $x\in \V$ and $\varepsilon \geq 0$, we denote by $B(x,\varepsilon)$ the closed ball of radius $\varepsilon$ centered at $x$.
\begin{lemma}\label{lem:interleaving}
Let $F \in \D^b(\kk_\V)$ with compact support, and $\varepsilon \geq 0$.  If for all $x \in \textnormal{supp}(F)$ one has $\Rr \Gamma (B(x, \varepsilon) ; F) \simeq 0$, then  $F$ is $\frac{\varepsilon}{2}$-isomorphic to $0$.
\end{lemma}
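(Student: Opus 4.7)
The plan is to reduce the claim to the vanishing of the single canonical morphism $\chi_{\varepsilon, 0}\star F : K_\varepsilon \star F \to F$ in $\D^b(\kk_\V)$. Indeed, unpacking the definition of $\varepsilon/2$-isomorphism with $G = 0$, the only candidates for $f$ and $g$ are the zero morphisms, and the two commutative-diagram conditions collapse to the single requirement $\chi_{\varepsilon, 0}\star F = 0$ (the second diagram has $0$ on both sides and is automatic).

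First I would compute the stalks of $K_\varepsilon \star F$. Applying proper base change for $\Rr s_!$ along $s : \V \times \V \to \V$, parametrizing $s^{-1}(x) \simeq \V$ via $a \mapsto (a, x-a)$, and changing variables $y = x - a$, one obtains
\[
(K_\varepsilon \star F)_x \;\simeq\; \Rr\Gamma_c(B(x,\varepsilon); F) \;\simeq\; \Rr\Gamma(B(x, \varepsilon); F),
\]
the last identification following from compactness of $B(x, \varepsilon)$. By hypothesis, this vanishes for every $x \in S := \textnormal{supp}(F)$.

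Next, writing $i \colon S \hookrightarrow \V$ for the closed embedding, the very definition of $S$ makes the adjunction unit $F \to i_* i^{-1} F$ a stalkwise (hence derived) isomorphism. The standard $(i^{-1}, i_*)$ adjunction for closed immersions then yields
\[
\textnormal{Hom}_{\D^b(\kk_\V)}(K_\varepsilon \star F,\, F) \;\simeq\; \textnormal{Hom}_{\D^b(\kk_S)}\bigl(i^{-1}(K_\varepsilon \star F),\, i^{-1} F\bigr).
\]
The complex $i^{-1}(K_\varepsilon \star F)$ has all stalks zero by the previous step, so each of its cohomology sheaves is zero and therefore $i^{-1}(K_\varepsilon \star F) \simeq 0$ in $\D^b(\kk_S)$. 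Consequently the Hom-group above is zero and, in particular, $\chi_{\varepsilon, 0}\star F = 0$, which concludes.

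The step that will require the most care is the stalk identification: one must track the base change, execute the change of variables correctly inside the fiber $s^{-1}(x)$, and invoke compactness of $B(x, \varepsilon)$ to pass from $\Rr\Gamma_c$ to $\Rr\Gamma$ (this is where the compact-support hypothesis on $F$ is invisibly used to keep everything inside $\D^b$). Once this formula is in hand, the rest is a short application of adjunction together with the fact that a bounded complex of sheaves with vanishing stalks everywhere is zero in the derived category.
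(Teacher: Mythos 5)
Your proof is correct and takes essentially the same route as the paper's: reduce the $\tfrac{\varepsilon}{2}$-isomorphism with $0$ to the vanishing of the canonical morphism $\chi_{\varepsilon,0}\star F\colon K_\varepsilon\star F\to F$, and use the stalk identification $(K_\varepsilon\star F)_x\simeq \Rr\Gamma(B(x,\varepsilon);F)$, which the paper simply cites from Petit--Schapira (eq.\ (2.12) of that reference) rather than rederiving via base change. The only substantive difference is your last step, and it is an improvement: the paper passes from stalkwise vanishing of the morphism to its vanishing in $\D^b(\kk_\V)$, which is not a valid inference in general, whereas your argument via the closed embedding $i\colon \textnormal{supp}(F)\hookrightarrow\V$ --- showing that the whole group of morphisms $K_\varepsilon\star F\to F$ vanishes because the source restricts to zero on a closed set containing the support of the target --- closes that gap rigorously (your aside about compact support of $F$ being needed for boundedness is inessential, but harmless).
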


\begin{proof}
Let $F$ and $\varepsilon$ as in the statement. By definition of interleavings, it is sufficient to prove that the canonical map $F \star K_\varepsilon \longrightarrow F$ is zero. Let $x \in \V$. If $x \not \in \textnormal{supp}(F)$, it is clear that the induced morphism $(F \star K_\varepsilon)_x \longrightarrow F_x$ is zero. Let us assume that $x \in \textnormal{supp}(F)$. By equation (2.12) in \cite{PS20}, one has \[(F \star K_\varepsilon)_x \simeq \Rr \Gamma(B(x,\varepsilon);F) \simeq 0.\]

Therefore the morphism $(F \star K_\varepsilon)_x \longrightarrow F_x$ is zero in every case, which implies that $F \star K_\varepsilon \longrightarrow F$ is also zero.
\end{proof}

\begin{definition}
Given $X \subset \V$ and $\varepsilon \geq 0$, the $\varepsilon$-thickening of $X$ is defined by: 

\[T_\varepsilon(X) := \{v \in \V \mid d(v,X) \leq \varepsilon\}.\]
\end{definition}

\begin{lemma} \label{lemma:vanishing}
Let $X \subset Y$ be compact convex subsets of $\V$, and assume that there exists $\varepsilon\geq 0$ such that $Y \subset T_\varepsilon(X)$. Then $d_C(\kk_{Y \backslash X},0) \leq \frac{\varepsilon}{2}$.
\end{lemma}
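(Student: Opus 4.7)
My plan is to invoke Lemma \ref{lem:interleaving} with the sheaf $F = \kk_{Y\setminus X}$. Since $X \subset Y$ are compact, so is $Y\setminus X$, and hence $\kk_{Y\setminus X}$ has compact support contained in $Y$. By the lemma, it suffices to verify that for every $x \in \Supp(\kk_{Y\setminus X}) \subset \overline{Y\setminus X} \subset Y$, one has $\Rr \Gamma(B(x,\varepsilon); \kk_{Y\setminus X}) \simeq 0$.

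To compute this, I would use the standard short exact sequence of sheaves associated to the closed inclusion $X \hookrightarrow Y$ in $\V$,
\begin{equation*}
0 \longrightarrow \kk_{Y\setminus X} \longrightarrow \kk_Y \longrightarrow \kk_X \longrightarrow 0,
\end{equation*}
and apply $\Rr\Gamma(B(x,\varepsilon);-)$ to obtain a distinguished triangle. For any $x \in Y$, the intersections $B(x,\varepsilon) \cap Y$ and $B(x,\varepsilon)\cap X$ are intersections of convex sets, hence convex, and they are both nonempty: the first contains $x$, and the second is nonempty because $Y \subset T_\varepsilon(X)$ together with compactness of $X$ gives some $y \in X$ with $\|x-y\| \leq \varepsilon$. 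Therefore
\begin{equation*}
\Rr\Gamma(B(x,\varepsilon); \kk_Y) \simeq \Rr\Gamma(B(x,\varepsilon)\cap Y; \kk) \simeq \kk,
\end{equation*}
and likewise $\Rr\Gamma(B(x,\varepsilon); \kk_X) \simeq \kk$, since both sets are nonempty and contractible.

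The main point that still needs attention is that the induced map $\kk \to \kk$ is an isomorphism, so that the middle object of the triangle vanishes. This is immediate once unwound: on sections over $B(x,\varepsilon)$, the morphism $\kk_Y \to \kk_X$ corresponds to restriction of locally constant $\kk$-valued functions from $B(x,\varepsilon)\cap Y$ to $B(x,\varepsilon)\cap X$, and since both are connected and nonempty this is the identity map on $\kk$. Consequently $\Rr\Gamma(B(x,\varepsilon);\kk_{Y\setminus X}) \simeq 0$ for every $x$ in the support, and Lemma \ref{lem:interleaving} yields $d_C(\kk_{Y\setminus X},0) \leq \frac{\varepsilon}{2}$.

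The only real obstacle I anticipate is the (minor) bookkeeping at boundary points of $Y\setminus X$: since $Y\setminus X$ need not be closed, one must be a little careful that the support of $\kk_{Y\setminus X}$ still sits inside $Y$ so that the convexity argument for $B(x,\varepsilon)\cap Y$ applies — this is automatic because $Y$ is closed.
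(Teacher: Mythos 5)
Your proof is correct and follows essentially the same route as the paper: the short exact sequence $0 \to \kk_{Y\setminus X} \to \kk_Y \to \kk_X \to 0$, the convexity/contractibility of $B(x,\varepsilon)\cap Y$ and $B(x,\varepsilon)\cap X$ to identify both section complexes with $\kk$ via an isomorphism, and then Lemma \ref{lem:interleaving}. The only cosmetic difference is that the paper works with radii $\varepsilon'>\varepsilon$ and lets $\varepsilon'\to\varepsilon$, whereas you work directly at radius $\varepsilon$, which is legitimate since compactness of $X$ guarantees $B(x,\varepsilon)\cap X\neq\emptyset$ for $x\in Y\subset T_\varepsilon(X)$.
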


\begin{proof}
For $y\in Y$ and $\varepsilon' > \varepsilon$,  one has the following distinguished triangle: 

\[\Rr \Gamma (B(y,\varepsilon') ; \kk_Y) \longrightarrow \Rr \Gamma (B(y,\varepsilon') ; \kk_X) \longrightarrow \Rr \Gamma (B(y,\varepsilon') ; \kk_{Y\backslash X}) \stackrel{+1}{\longrightarrow}.\]

\noindent By hypothesis,  $B(y, \varepsilon')\cap Y \cap X$ is non-empty and convex. Since $X$ and $Y$ are closed convex subsets, we deduce that the map $\Rr \Gamma (B(y,\varepsilon') ; \kk_Y) \longrightarrow \Rr \Gamma (B(y,\varepsilon') ; \kk_X)$ is an isomorphism. Therefore,  $\Rr \Gamma (B(y,\varepsilon') ; \kk_{Y\backslash X}) \simeq 0$, for all $y \in \textnormal{supp}(\kk_{Y\backslash X}) \subset Y$ and $\varepsilon' > \varepsilon$.  Lemma \ref{lem:interleaving} implies that $d_C(\kk_{Y\backslash X},0) \leq \frac{\varepsilon}{2}$.
\end{proof}

\begin{definition}
Let $\varepsilon \geq 0$. An $\varepsilon$-flag is a sequence of nested subsets $X^0 \subset X^1 \subset ...\subset X^n$ of $\V$ satisfying: 

\begin{enumerate}
    \item $X^i$ is a compact convex subset of $\V$, for all $i$;
    \item $X^0 = \{x_0\}$ is a single point;
    \item $X^i \subset T_\varepsilon(X^{i-1})$ for all $i$.
\end{enumerate}
We designate this data by $X^\bullet$.
\end{definition}

\noindent Given an $\varepsilon$-flag $X^\bullet = (X^i)_{i = 0... n}$, and $i \in \llbracket 0,n \rrbracket$, we define the spaces $\Gr_i(X^\bullet)$ by

\[\Gr_0(X^\bullet) := X^0, ~~~\textnormal{and for all~} i\geq 1, \Gr_i(X^\bullet) := X^i \backslash X^{i-1}. \]

\noindent It is immediate to verify that $\Gr_i(X^\bullet)$ is  locally closed for all $i \in \llbracket0,n \rrbracket$, and that one has $X^n = \sqcup_i \Gr_i(X^\bullet).$ Moreover, we set:

\[S(X^\bullet) := \bigoplus_{i = 0}^n \kk_{\Gr_i(X^\bullet)} \in \D^b_{\R c}(\kk_\V).\]

\begin{proposition}\label{P:flag}
    Let $X^\bullet = (X^i)_{i = 0 ... n }$ be an $\varepsilon$-flag. Then one has:
    
    \begin{enumerate}
        \item $\chi(S(X^\bullet)) = \chi(\kk_{X^n})$;
        \item $d_C(S(X^\bullet), \kk_{X^0} ) \leq \frac{\varepsilon}{2}$.
    \end{enumerate}
\end{proposition}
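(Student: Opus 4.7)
The plan is that both parts of Proposition \ref{P:flag} follow directly from the lemmas already established in Section \ref{section:flag}, together with Proposition \ref{p:additivityinterleavings}.

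For part (1), I would first observe that for any locally closed subset $U \subset \V$, the sheaf $\kk_U$ has stalk $\kk$ at points of $U$ and $0$ elsewhere, so its local Euler characteristic is simply $\chi(\kk_U) = 1_U$. Since the $\Gr_i(X^\bullet)$ are locally closed and $X^n = \sqcup_i \Gr_i(X^\bullet)$ is a disjoint union, additivity of $\chi$ on direct sums yields
\[
\chi(S(X^\bullet)) = \sum_{i=0}^n \chi(\kk_{\Gr_i(X^\bullet)}) = \sum_{i=0}^n 1_{\Gr_i(X^\bullet)} = 1_{X^n} = \chi(\kk_{X^n}),
\]
which is exactly (1).

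For part (2), I would split off the $i=0$ summand: since $\Gr_0(X^\bullet) = X^0$, we can write $S(X^\bullet) = \kk_{X^0} \oplus \bigoplus_{i=1}^n \kk_{\Gr_i(X^\bullet)}$. Applying Proposition \ref{p:additivityinterleavings} with the obvious pairing of $\kk_{X^0}$ on each side (and no partner for the remaining summands) gives
\[
d_C(S(X^\bullet), \kk_{X^0}) \leq \max_{1 \leq i \leq n} d_C(\kk_{\Gr_i(X^\bullet)}, 0).
\]
It thus remains to bound each term on the right by $\varepsilon/2$. Fix $i \geq 1$; by definition of an $\varepsilon$-flag, $X^{i-1}$ and $X^i$ are compact convex subsets of $\V$ with $X^{i-1} \subset X^i \subset T_\varepsilon(X^{i-1})$. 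Lemma \ref{lemma:vanishing} applied to this pair yields $d_C(\kk_{X^i \setminus X^{i-1}}, 0) \leq \varepsilon/2$, that is $d_C(\kk_{\Gr_i(X^\bullet)}, 0) \leq \varepsilon/2$. Taking the maximum over $i$ finishes part (2).

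I do not expect a genuine obstacle here: the proposition is essentially a bookkeeping result that packages together the disjoint decomposition $X^n = \sqcup_i \Gr_i(X^\bullet)$ (for the Euler characteristic statement), the telescoping of the flag (to reduce to consecutive differences), the vanishing Lemma \ref{lemma:vanishing} (to handle each difference), and the additivity of interleavings (to reassemble). The only point that requires care is invoking Proposition \ref{p:additivityinterleavings} with the correct partial bijection so that the $\kk_{X^0}$ summand on the left is matched with the target $\kk_{X^0}$ on the right and all other summands contribute only through their distance to $0$.
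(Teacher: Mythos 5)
Your proposal is correct and follows essentially the same route as the paper: part (1) from the disjoint decomposition $X^n = \sqcup_i \Gr_i(X^\bullet)$ and additivity of $\chi$, and part (2) by applying Lemma \ref{lemma:vanishing} to each consecutive pair $(X^{i-1},X^i)$ and reassembling via Proposition \ref{p:additivityinterleavings} with $\kk_{X^0}$ matched to itself. Your write-up just makes the bookkeeping (in particular $\chi(\kk_U)=1_U$ and the choice of partial bijection) more explicit than the paper does.
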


\begin{proof}
\begin{enumerate}
    \item This is a direct consequence of the fact that $X^n = \sqcup_i \Gr_i(X^\bullet).$
    
    \item For $i \geq 1$, the definition of $\varepsilon$-flag implies that the pair $(X^{i-1}, X^i)$ satisfy the hypothesis of Lemma \ref{lemma:vanishing}. Therefore, $d_C(\kk_{\Gr_i(X^\bullet)}, 0) \leq \frac{\varepsilon}{2}$. By additivity of interleavings, one deduces:
    
    \begin{align*}
        d_C(S(X^\bullet), \kk_{X^0} ) &= d_C(\kk_{X^0} \oplus \bigoplus_{i=1}^n \kk_{\Gr_i(X^\bullet)}, \kk_{X^0}) \\
        &\leq \max \left (d_C \left (\kk_{X^0},\kk_{X^0} \right), \max_{i = 1 ... n} d_C \left (\kk_{\Gr_i(X^\bullet)}, 0 \right ) \right ) \quad \textnormal{(Proposition \ref{p:additivityinterleavings})}\\
        &= \max_{i = 1 ... n} d_C(\kk_{\Gr_i(X^\bullet)}, 0) \\
        &\leq \frac{\varepsilon}{2}.
    \end{align*}
\end{enumerate}
\end{proof}

\subsection{PL-case}\label{section:PLcase}

The first step of our proof is the following concentration lemma in the Piecewise-Linear (PL) case, that we will extend later on to arbitrary stratification by density of PL-sheaves with respect to the convolution distance.

\begin{lemma} \label{lemma:reunion} 
Let $\varphi \in \CF_\PL(\V)$ with compact support, such that $\varphi = \sum_{\lambda \in A} C_\lambda \cdot 1_{X_\lambda}$, with $A$ finite and $X_\lambda$ compact and convex polytopes. For $\lambda \in A$, let $x_\lambda \in X_\lambda$. Then one has  
\[\delta \left (\varphi,\sum_{\lambda \in A} C_\lambda \cdot 1_{\{x_\lambda\}} \right) = 0.\]
\end{lemma}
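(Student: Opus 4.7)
The plan is to realize $\varphi$ and $\sum_\alpha C_\alpha \cdot 1_{\{x_\alpha\}}$ as Euler characteristics of two compactly supported constructible sheaves whose convolution distance can be made arbitrarily small, and then invoke the $d_C$-domination hypothesis on $\delta$. Given the tools already developed in Section \ref{section:flag}, this splits into three steps.

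The first step is geometric: for each $\alpha \in A$, I would build an $\varepsilon$-flag starting at $\{x_\alpha\}$ and ending at $X_\alpha$. A natural choice is
\[
    X^i_\alpha := X_\alpha \cap B(x_\alpha, i\varepsilon), \qquad 0 \leq i \leq n_\alpha,
\]
where $n_\alpha$ is chosen so that $n_\alpha \varepsilon$ exceeds the $\|\cdot\|$-diameter of $X_\alpha$. Each $X^i_\alpha$ is compact and convex as an intersection of compact convex sets, $X^0_\alpha = \{x_\alpha\}$, and $X^{n_\alpha}_\alpha = X_\alpha$. The containment $X^i_\alpha \subset T_\varepsilon(X^{i-1}_\alpha)$ follows from convexity: for $y \in X^i_\alpha$, the point $z$ on the segment $[x_\alpha, y] \subset X_\alpha$ with $\|z - x_\alpha\| = \min(\|y-x_\alpha\|, (i-1)\varepsilon)$ lies in $X^{i-1}_\alpha$ and satisfies $\|y - z\| \leq \varepsilon$. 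Proposition \ref{P:flag} then delivers a sheaf $S_\alpha := S(X^\bullet_\alpha) \in \D^b_{\R c}(\kk_\V)$ with compact support such that $\chi(S_\alpha) = 1_{X_\alpha}$ and $d_C(S_\alpha, \kk_{\{x_\alpha\}}) \leq \varepsilon/2$.

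The second step is to encode the integer coefficients $C_\alpha$ into sheaves and glue across $\alpha$. For $n \in \Z$ and $M \in \D^b(\kk_\V)$, set $M^{(n)} := M^{\oplus n}$ if $n \geq 0$ and $M^{(n)} := M[1]^{\oplus |n|}$ otherwise, so that $\chi(M^{(n)}) = n \cdot \chi(M)$; since the shift functor commutes with convolution, $d_C(M^{(n)}, N^{(n)}) = d_C(M, N)$ for any $N \in \D^b(\kk_\V)$. Defining
\[
    F := \bigoplus_{\alpha \in A} S_\alpha^{(C_\alpha)}, \qquad G := \bigoplus_{\alpha \in A} \kk_{\{x_\alpha\}}^{(C_\alpha)},
\]
both of which have compact support, one has $\chi(F) = \varphi$ and $\chi(G) = \sum_\alpha C_\alpha \cdot 1_{\{x_\alpha\}}$. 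Pairing summands by $\alpha$ and applying the additivity of interleavings (Proposition \ref{p:additivityinterleavings}) yields $d_C(F, G) \leq \max_\alpha d_C(S_\alpha, \kk_{\{x_\alpha\}}) \leq \varepsilon/2$. The $d_C$-domination of $\delta$ then gives $\delta\bigl(\varphi, \sum_\alpha C_\alpha \cdot 1_{\{x_\alpha\}}\bigr) \leq \Phi(\varepsilon/2)$, and letting $\varepsilon \to 0^+$ together with $\Phi(t) \to 0$ closes the argument.

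The main obstacle is really only conceptual and resides in the first step: one needs to observe that $\kk_{X_\alpha}$ and $\kk_{\{x_\alpha\}}$, which are typically far apart in the convolution distance, can be related through Proposition \ref{P:flag} applied to a nested sequence of compact convex sets whose successive Hausdorff jumps are all at most $\varepsilon$. Once such a flag is in hand, the remaining work is routine packaging: additivity of interleavings handles the index $\alpha$, and the shift trick $M \mapsto M^{(n)}$ absorbs negative multiplicities.
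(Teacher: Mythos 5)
Your argument follows essentially the same route as the paper: for each $\alpha$ build an $\varepsilon$-flag interpolating from $\{x_\alpha\}$ to $X_\alpha$, apply Proposition \ref{P:flag}, absorb the signs of the $C_\alpha$ by a degree shift, combine the summands via additivity of interleavings (Proposition \ref{p:additivityinterleavings}), and conclude by $d_C$-domination as $\varepsilon \to 0$. The only substantive difference is the flag itself: you take $X^i_\alpha = X_\alpha \cap B(x_\alpha, i\varepsilon)$, whereas the paper rescales the polytope linearly towards $x_\alpha$, i.e. $X^i_\alpha = x_\alpha + \tfrac{i}{n}(X_\alpha - x_\alpha)$. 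Your verification of the flag axioms is correct, but there is one point that needs repair in full generality: the $d_C$-domination hypothesis (and the sheaf-function correspondence itself) only applies to sheaves in $\D^b_{\R c}(\kk_\V)$, and for an arbitrary norm $\|\cdot\|$ the closed ball $B(x_\alpha, i\varepsilon)$ need not be subanalytic; hence your strata $X^i_\alpha \setminus X^{i-1}_\alpha$, and therefore $S_\alpha$, need not be $\R$-constructible. The paper's scaled flags stay compact convex polytopes, so all the sheaves involved remain (PL-)constructible and the domination hypothesis applies without further comment. The fix is immediate: use the linear rescaling, or intersect with balls of an auxiliary Euclidean or polyhedral norm, shrunk so that the $\varepsilon$-flag condition for $\|\cdot\|$ still holds. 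A last small nit: $\Phi$ is not assumed monotone, so from $d_C(F,G) \leq \varepsilon/2$ you may only conclude $\delta \leq \Phi(d_C(F,G))$, not $\delta \leq \Phi(\varepsilon/2)$; your limiting argument survives (as in the paper, fix $\varepsilon>0$, choose $\eta$ with $\Phi \leq \varepsilon$ on $[0,\eta]$, and run the construction at scale $\eta$), but it should be phrased that way.
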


\begin{proof}
 We consider the linear deformation retraction $H_\lambda : X_\lambda \times [0,1] \longrightarrow X_\lambda$ from  $\{x_\lambda\}$ to $X_\lambda$  defined by: 

\[ H_\lambda(x,t) = (1-t)\cdot x_\lambda + t\cdot x. \]

We set $\ell_\lambda = \max\{\|x - x_\lambda\| \mid x \in X_\lambda \}$ and $\ell = \max_\lambda \ell_\lambda$.  Let  $\varepsilon>0$ and $n = \lceil\frac{\ell}{\varepsilon}\rceil$. We define for $i \in \llbracket 0, n \rrbracket$ the sequence of subsets $X_\lambda^i := H_\lambda (X_\lambda \times {[0, \frac{i}{n}]})$. By construction, $X_\lambda^\bullet = (X_\lambda^i)_{i=0...n}$ is an $\varepsilon$-flag. We depict an illustration of $X_\lambda^\bullet$ in figure \ref{fig:flag}.

\begin{figure}
    \centering
   
 \resizebox{1\textwidth}{!}{%

\definecolor{zzwwff}{rgb}{0.6,0.4,1}
\definecolor{ccwwff}{rgb}{0.8,0.4,1}
\definecolor{cczzff}{rgb}{0.8,0.6,1}
\definecolor{ffffff}{rgb}{1,1,1}
\definecolor{ccccff}{rgb}{0.8,0.8,1}
\definecolor{zzttqq}{rgb}{0.6,0.2,0}
\begin{tikzpicture}[line cap=round,line join=round,>=triangle 45,x=1cm,y=1cm]
\clip(-19.346689233632603,-0.8058529748790468) rectangle (9.498837943338705,19.137882481885843);
\fill[line width=2pt,color=zzttqq] (-5,0) -- (2.7922362240260115,4.62237517933541) -- (2.6852600049119006,13.681837291298972) -- (-5.213952438228222,18.118924223927124) -- (-13.006188662254232,13.496549044591717) -- (-12.899212443140126,4.437086932628156) -- cycle;
\fill[line width=2pt,color=ccccff,fill=ccccff,fill opacity=1] (-13.006188662254232,13.496549044591717) -- (-10.071986594541027,13.13286527126167) -- (-9.99296294358437,6.440613687503346) -- (-12.899212443140126,4.437086932628156) -- cycle;
\fill[line width=2pt,color=ccccff,fill=ccccff,fill opacity=1] (-13.006188662254232,13.496549044591717) -- (-5.213952438228222,18.118924223927124) -- (-4.331081456037921,16.57409921608025) -- (-10.071986594541027,13.13286527126167) -- cycle;
\fill[line width=2pt,color=ccccff,fill=ccccff,fill opacity=1] (-5.213952438228222,18.118924223927124) -- (2.6852600049119006,13.681837291298972) -- (1.539121440735141,13.276732452920562) -- (-4.331081456037921,16.57409921608025) -- cycle;
\fill[line width=2pt,color=ccccff,fill=ccccff,fill opacity=1] (2.6852600049119006,13.681837291298972) -- (2.7922362240260115,4.62237517933541) -- (1.6186195066309166,6.544304237366243) -- (1.539121440735141,13.276732452920562) -- cycle;
\fill[line width=2pt,color=ccccff,fill=ccccff,fill opacity=1] (2.7922362240260115,4.62237517933541) -- (-5,0) -- (-4.172085324246375,3.1092427849716113) -- (1.6186195066309166,6.544304237366243) -- cycle;
\fill[line width=2pt,color=ccccff,fill=ccccff,fill opacity=1] (-5,0) -- (-4.172085324246375,3.1092427849716113) -- (-9.99296294358437,6.440613687503346) -- (-12.899212443140126,4.437086932628156) -- cycle;
\fill[line width=2pt,color=cczzff,fill=cczzff,fill opacity=1] (-4.172085324246375,3.1092427849716113) -- (-3.3248203998271095,6.291155644303581) -- (-7.086713444028617,8.444140442378535) -- (-9.99296294358437,6.440613687503346) -- cycle;
\fill[line width=2pt,color=cczzff,fill=cczzff,fill opacity=1] (-9.99296294358437,6.440613687503346) -- (-7.086713444028617,8.444140442378535) -- (-7.137784526827822,12.76918149793162) -- (-10.071986594541027,13.13286527126167) -- cycle;
\fill[line width=2pt,color=cczzff,fill=cczzff,fill opacity=1] (-7.137784526827822,12.76918149793162) -- (-3.4275757713289052,14.993168134839374) -- (-4.331081456037921,16.57409921608025) -- (-10.071986594541027,13.13286527126167) -- cycle;
\fill[line width=2pt,color=cczzff,fill=cczzff,fill opacity=1] (-3.4275757713289052,14.993168134839374) -- (0.36619501197328413,12.862159393253362) -- (1.539121440735141,13.276732452920562) -- (-4.331081456037921,16.57409921608025) -- cycle;
\fill[line width=2pt,color=cczzff,fill=cczzff,fill opacity=1] (0.41757269772418354,8.51115314798547) -- (1.6186195066309166,6.544304237366243) -- (1.539121440735141,13.276732452920562) -- (0.36619501197328413,12.862159393253362) -- cycle;
\fill[line width=2pt,color=cczzff,fill=cczzff,fill opacity=1] (-4.172085324246375,3.1092427849716113) -- (1.6186195066309166,6.544304237366243) -- (0.41757269772418354,8.51115314798547) -- (-3.3248203998271095,6.291155644303581) -- cycle;
\fill[line width=2pt,color=ccwwff,fill=ccwwff,fill opacity=1] (-3.3248203998271095,6.291155644303581) -- (0.41757269772418354,8.51115314798547) -- (-0.7834741111825433,10.478002058604687) -- (-2.477555475407845,9.473068503635549) -- cycle;
\fill[line width=2pt,color=ccwwff,fill=ccwwff,fill opacity=1] (-3.3248203998271095,6.291155644303581) -- (-2.477555475407845,9.473068503635549) -- (-4.180463944472864,10.447667197253722) -- (-7.086713444028617,8.444140442378535) -- cycle;
\fill[line width=2pt,color=ccwwff,fill=ccwwff,fill opacity=1] (-7.086713444028617,8.444140442378535) -- (-4.180463944472864,10.447667197253722) -- (-4.203582459114618,12.405497724601574) -- (-7.137784526827822,12.76918149793162) -- cycle;
\fill[line width=2pt,color=ccwwff,fill=ccwwff,fill opacity=1] (-4.203582459114618,12.405497724601574) -- (-2.524070086619891,13.412237053598506) -- (-3.4275757713289052,14.993168134839374) -- (-7.137784526827822,12.76918149793162) -- cycle;
\fill[line width=2pt,color=ccwwff,fill=ccwwff,fill opacity=1] (-2.524070086619891,13.412237053598506) -- (-0.8067314167885665,12.44758633358616) -- (0.36619501197328413,12.862159393253362) -- (-3.4275757713289052,14.993168134839374) -- cycle;
\fill[line width=2pt,color=ccwwff,fill=ccwwff,fill opacity=1] (-0.7834741111825433,10.478002058604687) -- (0.41757269772418354,8.51115314798547) -- (0.36619501197328413,12.862159393253362) -- (-0.8067314167885665,12.44758633358616) -- cycle;
\fill[line width=2pt,color=zzwwff,fill=zzwwff,fill opacity=1] (-2.477555475407845,9.473068503635549) -- (-0.7834741111825433,10.478002058604687) -- (-1.7768146185999911,12.104708595268635) -- cycle;
\fill[line width=2pt,color=zzwwff,fill=zzwwff,fill opacity=1] (-2.477555475407845,9.473068503635549) -- (-1.7768146185999911,12.104708595268635) -- (-4.180463944472864,10.447667197253722) -- cycle;
\fill[line width=2pt,color=zzwwff,fill=zzwwff,fill opacity=1] (-4.180463944472864,10.447667197253722) -- (-1.7768146185999911,12.104708595268635) -- (-4.203582459114618,12.405497724601574) -- cycle;
\fill[line width=2pt,color=zzwwff,fill=zzwwff,fill opacity=1] (-1.7768146185999911,12.104708595268635) -- (-2.524070086619891,13.412237053598506) -- (-4.203582459114618,12.405497724601574) -- cycle;
\fill[line width=2pt,color=zzwwff,fill=zzwwff,fill opacity=1] (-1.7768146185999911,12.104708595268635) -- (-0.8067314167885665,12.44758633358616) -- (-2.524070086619891,13.412237053598506) -- cycle;
\fill[line width=2pt,color=zzwwff,fill=zzwwff,fill opacity=1] (-1.7768146185999911,12.104708595268635) -- (-0.7834741111825433,10.478002058604687) -- (-0.8067314167885665,12.44758633358616) -- cycle;
\draw [line width=2pt,color=ccccff] (-5,0)-- (2.7922362240260115,4.62237517933541);
\draw [line width=2pt,color=ccccff] (2.7922362240260115,4.62237517933541)-- (2.6852600049119006,13.681837291298972);
\draw [line width=2pt,color=ccccff] (2.6852600049119006,13.681837291298972)-- (-5.213952438228222,18.118924223927124);
\draw [line width=2pt,color=ccccff] (-5.213952438228222,18.118924223927124)-- (-13.006188662254232,13.496549044591717);
\draw [line width=2pt,color=ccccff] (-13.006188662254232,13.496549044591717)-- (-12.899212443140126,4.437086932628156);
\draw [line width=2pt,color=ccccff] (-12.899212443140126,4.437086932628156)-- (-5,0);
\draw [line width=2pt] (-9.99296294358437,6.440613687503346)-- (-4.172085324246375,3.1092427849716113);
\draw [line width=2pt] (-9.99296294358437,6.440613687503346)-- (-10.071986594541027,13.13286527126167);
\draw [line width=2pt] (-10.071986594541027,13.13286527126167)-- (-4.331081456037921,16.57409921608025);
\draw [line width=2pt] (-4.331081456037921,16.57409921608025)-- (1.539121440735141,13.276732452920562);
\draw [line width=2pt] (1.539121440735141,13.276732452920562)-- (1.6186195066309166,6.544304237366243);
\draw [line width=2pt] (1.6186195066309166,6.544304237366243)-- (-4.172085324246375,3.1092427849716113);
\draw [line width=2pt] (-7.086713444028617,8.444140442378535)-- (-7.137784526827822,12.76918149793162);
\draw [line width=2pt] (-7.137784526827822,12.76918149793162)-- (-3.4275757713289052,14.993168134839374);
\draw [line width=2pt] (-3.4275757713289052,14.993168134839374)-- (0.36619501197328413,12.862159393253362);
\draw [line width=2pt] (0.36619501197328413,12.862159393253362)-- (0.41757269772418354,8.51115314798547);
\draw [line width=2pt] (0.41757269772418354,8.51115314798547)-- (-3.3248203998271095,6.291155644303581);
\draw [line width=2pt] (-3.3248203998271095,6.291155644303581)-- (-7.086713444028617,8.444140442378535);
\draw [line width=2pt] (-4.180463944472864,10.447667197253722)-- (-4.203582459114618,12.405497724601574);
\draw [line width=2pt] (-4.203582459114618,12.405497724601574)-- (-2.524070086619891,13.412237053598506);
\draw [line width=2pt] (-2.524070086619891,13.412237053598506)-- (-0.8067314167885665,12.44758633358616);
\draw [line width=2pt] (-0.8067314167885665,12.44758633358616)-- (-0.7834741111825433,10.478002058604687);
\draw [line width=2pt] (-0.7834741111825433,10.478002058604687)-- (-2.477555475407845,9.473068503635549);
\draw [line width=2pt] (-2.477555475407845,9.473068503635549)-- (-4.180463944472864,10.447667197253722);
\draw [line width=2pt,color=ccccff] (-13.006188662254232,13.496549044591717)-- (-10.071986594541027,13.13286527126167);
\draw [line width=2pt,color=ccccff] (-10.071986594541027,13.13286527126167)-- (-9.99296294358437,6.440613687503346);
\draw [line width=2pt,color=ccccff] (-9.99296294358437,6.440613687503346)-- (-12.899212443140126,4.437086932628156);
\draw [line width=2pt,color=ccccff] (-12.899212443140126,4.437086932628156)-- (-13.006188662254232,13.496549044591717);
\draw [line width=2pt,color=ccccff] (-13.006188662254232,13.496549044591717)-- (-5.213952438228222,18.118924223927124);
\draw [line width=2pt,color=ccccff] (-5.213952438228222,18.118924223927124)-- (-4.331081456037921,16.57409921608025);
\draw [line width=2pt,color=ccccff] (-4.331081456037921,16.57409921608025)-- (-10.071986594541027,13.13286527126167);
\draw [line width=2pt,color=ccccff] (-10.071986594541027,13.13286527126167)-- (-13.006188662254232,13.496549044591717);
\draw [line width=2pt,color=ccccff] (-5.213952438228222,18.118924223927124)-- (2.6852600049119006,13.681837291298972);
\draw [line width=2pt,color=ccccff] (2.6852600049119006,13.681837291298972)-- (1.539121440735141,13.276732452920562);
\draw [line width=2pt,color=ccccff] (1.539121440735141,13.276732452920562)-- (-4.331081456037921,16.57409921608025);
\draw [line width=2pt,color=ccccff] (-4.331081456037921,16.57409921608025)-- (-5.213952438228222,18.118924223927124);
\draw [line width=2pt,color=ccccff] (2.6852600049119006,13.681837291298972)-- (2.7922362240260115,4.62237517933541);
\draw [line width=2pt,color=ccccff] (2.7922362240260115,4.62237517933541)-- (1.6186195066309166,6.544304237366243);
\draw [line width=2pt,color=ccccff] (1.6186195066309166,6.544304237366243)-- (1.539121440735141,13.276732452920562);
\draw [line width=2pt,color=ccccff] (1.539121440735141,13.276732452920562)-- (2.6852600049119006,13.681837291298972);
\draw [line width=2pt,color=ccccff] (2.7922362240260115,4.62237517933541)-- (-5,0);
\draw [line width=2pt,color=ccccff] (-5,0)-- (-4.172085324246375,3.1092427849716113);
\draw [line width=2pt,color=ccccff] (-4.172085324246375,3.1092427849716113)-- (1.6186195066309166,6.544304237366243);
\draw [line width=2pt,color=ccccff] (1.6186195066309166,6.544304237366243)-- (2.7922362240260115,4.62237517933541);
\draw [line width=2pt,color=ccccff] (-5,0)-- (-4.172085324246375,3.1092427849716113);
\draw [line width=2pt,color=ccccff] (-4.172085324246375,3.1092427849716113)-- (-9.99296294358437,6.440613687503346);
\draw [line width=2pt,color=ccccff] (-9.99296294358437,6.440613687503346)-- (-12.899212443140126,4.437086932628156);
\draw [line width=2pt,color=ccccff] (-12.899212443140126,4.437086932628156)-- (-5,0);
\draw [line width=2pt,color=cczzff] (-4.172085324246375,3.1092427849716113)-- (-3.3248203998271095,6.291155644303581);
\draw [line width=2pt,color=cczzff] (-3.3248203998271095,6.291155644303581)-- (-7.086713444028617,8.444140442378535);
\draw [line width=2pt,color=cczzff] (-7.086713444028617,8.444140442378535)-- (-9.99296294358437,6.440613687503346);
\draw [line width=2pt,color=cczzff] (-9.99296294358437,6.440613687503346)-- (-4.172085324246375,3.1092427849716113);
\draw [line width=2pt,color=cczzff] (-9.99296294358437,6.440613687503346)-- (-7.086713444028617,8.444140442378535);
\draw [line width=2pt,color=cczzff] (-7.086713444028617,8.444140442378535)-- (-7.137784526827822,12.76918149793162);
\draw [line width=2pt,color=cczzff] (-7.137784526827822,12.76918149793162)-- (-10.071986594541027,13.13286527126167);
\draw [line width=2pt,color=cczzff] (-10.071986594541027,13.13286527126167)-- (-9.99296294358437,6.440613687503346);
\draw [line width=2pt,color=cczzff] (-7.137784526827822,12.76918149793162)-- (-3.4275757713289052,14.993168134839374);
\draw [line width=2pt,color=cczzff] (-3.4275757713289052,14.993168134839374)-- (-4.331081456037921,16.57409921608025);
\draw [line width=2pt,color=cczzff] (-4.331081456037921,16.57409921608025)-- (-10.071986594541027,13.13286527126167);
\draw [line width=2pt,color=cczzff] (-10.071986594541027,13.13286527126167)-- (-7.137784526827822,12.76918149793162);
\draw [line width=2pt,color=cczzff] (-3.4275757713289052,14.993168134839374)-- (0.36619501197328413,12.862159393253362);
\draw [line width=2pt,color=cczzff] (0.36619501197328413,12.862159393253362)-- (1.539121440735141,13.276732452920562);
\draw [line width=2pt,color=cczzff] (1.539121440735141,13.276732452920562)-- (-4.331081456037921,16.57409921608025);
\draw [line width=2pt,color=cczzff] (-4.331081456037921,16.57409921608025)-- (-3.4275757713289052,14.993168134839374);
\draw [line width=2pt,color=cczzff] (0.41757269772418354,8.51115314798547)-- (1.6186195066309166,6.544304237366243);
\draw [line width=2pt,color=cczzff] (1.6186195066309166,6.544304237366243)-- (1.539121440735141,13.276732452920562);
\draw [line width=2pt,color=cczzff] (1.539121440735141,13.276732452920562)-- (0.36619501197328413,12.862159393253362);
\draw [line width=2pt,color=cczzff] (0.36619501197328413,12.862159393253362)-- (0.41757269772418354,8.51115314798547);
\draw [line width=2pt,color=cczzff] (-4.172085324246375,3.1092427849716113)-- (1.6186195066309166,6.544304237366243);
\draw [line width=2pt,color=cczzff] (1.6186195066309166,6.544304237366243)-- (0.41757269772418354,8.51115314798547);
\draw [line width=2pt,color=cczzff] (0.41757269772418354,8.51115314798547)-- (-3.3248203998271095,6.291155644303581);
\draw [line width=2pt,color=cczzff] (-3.3248203998271095,6.291155644303581)-- (-4.172085324246375,3.1092427849716113);
\draw [line width=2pt,color=ccwwff] (-3.3248203998271095,6.291155644303581)-- (0.41757269772418354,8.51115314798547);
\draw [line width=2pt,color=ccwwff] (0.41757269772418354,8.51115314798547)-- (-0.7834741111825433,10.478002058604687);
\draw [line width=2pt,color=ccwwff] (-0.7834741111825433,10.478002058604687)-- (-2.477555475407845,9.473068503635549);
\draw [line width=2pt,color=ccwwff] (-2.477555475407845,9.473068503635549)-- (-3.3248203998271095,6.291155644303581);
\draw [line width=2pt,color=ccwwff] (-3.3248203998271095,6.291155644303581)-- (-2.477555475407845,9.473068503635549);
\draw [line width=2pt,color=ccwwff] (-2.477555475407845,9.473068503635549)-- (-4.180463944472864,10.447667197253722);
\draw [line width=2pt,color=ccwwff] (-4.180463944472864,10.447667197253722)-- (-7.086713444028617,8.444140442378535);
\draw [line width=2pt,color=ccwwff] (-7.086713444028617,8.444140442378535)-- (-3.3248203998271095,6.291155644303581);
\draw [line width=2pt,color=ccwwff] (-7.086713444028617,8.444140442378535)-- (-4.180463944472864,10.447667197253722);
\draw [line width=2pt,color=ccwwff] (-4.180463944472864,10.447667197253722)-- (-4.203582459114618,12.405497724601574);
\draw [line width=2pt,color=ccwwff] (-4.203582459114618,12.405497724601574)-- (-7.137784526827822,12.76918149793162);
\draw [line width=2pt,color=ccwwff] (-7.137784526827822,12.76918149793162)-- (-7.086713444028617,8.444140442378535);
\draw [line width=2pt,color=ccwwff] (-4.203582459114618,12.405497724601574)-- (-2.524070086619891,13.412237053598506);
\draw [line width=2pt,color=ccwwff] (-3.4275757713289052,14.993168134839374)-- (-7.137784526827822,12.76918149793162);
\draw [line width=2pt,color=ccwwff] (-7.137784526827822,12.76918149793162)-- (-4.203582459114618,12.405497724601574);
\draw [line width=2pt,color=ccwwff] (-2.524070086619891,13.412237053598506)-- (-0.8067314167885665,12.44758633358616);
\draw [line width=2pt,color=ccwwff] (-0.8067314167885665,12.44758633358616)-- (0.36619501197328413,12.862159393253362);
\draw [line width=2pt,color=ccwwff] (0.36619501197328413,12.862159393253362)-- (-3.4275757713289052,14.993168134839374);
\draw [line width=2pt,color=ccwwff] (-0.7834741111825433,10.478002058604687)-- (0.41757269772418354,8.51115314798547);
\draw [line width=2pt,color=ccwwff] (0.41757269772418354,8.51115314798547)-- (0.36619501197328413,12.862159393253362);
\draw [line width=2pt,color=ccwwff] (0.36619501197328413,12.862159393253362)-- (-0.8067314167885665,12.44758633358616);
\draw [line width=2pt,color=ccwwff] (-0.8067314167885665,12.44758633358616)-- (-0.7834741111825433,10.478002058604687);
\draw [line width=2pt,color=zzwwff] (-2.477555475407845,9.473068503635549)-- (-0.7834741111825433,10.478002058604687);
\draw [line width=2pt,color=zzwwff] (-0.7834741111825433,10.478002058604687)-- (-1.7768146185999911,12.104708595268635);
\draw [line width=2pt,color=zzwwff] (-1.7768146185999911,12.104708595268635)-- (-2.477555475407845,9.473068503635549);
\draw [line width=2pt,color=zzwwff] (-2.477555475407845,9.473068503635549)-- (-1.7768146185999911,12.104708595268635);
\draw [line width=2pt,color=zzwwff] (-1.7768146185999911,12.104708595268635)-- (-4.180463944472864,10.447667197253722);
\draw [line width=2pt,color=zzwwff] (-4.180463944472864,10.447667197253722)-- (-2.477555475407845,9.473068503635549);
\draw [line width=2pt,color=zzwwff] (-4.180463944472864,10.447667197253722)-- (-1.7768146185999911,12.104708595268635);
\draw [line width=2pt,color=zzwwff] (-1.7768146185999911,12.104708595268635)-- (-4.203582459114618,12.405497724601574);
\draw [line width=2pt,color=zzwwff] (-4.203582459114618,12.405497724601574)-- (-4.180463944472864,10.447667197253722);
\draw [line width=2pt,color=zzwwff] (-2.524070086619891,13.412237053598506)-- (-4.203582459114618,12.405497724601574);
\draw [line width=2pt,color=zzwwff] (-4.203582459114618,12.405497724601574)-- (-1.7768146185999911,12.104708595268635);
\draw [line width=2pt,color=zzwwff] (-1.7768146185999911,12.104708595268635)-- (-0.8067314167885665,12.44758633358616);
\draw [line width=2pt,color=zzwwff] (-0.8067314167885665,12.44758633358616)-- (-2.524070086619891,13.412237053598506);
\draw [line width=2pt,color=zzwwff] (-1.7768146185999911,12.104708595268635)-- (-0.7834741111825433,10.478002058604687);
\draw [line width=2pt,color=zzwwff] (-0.7834741111825433,10.478002058604687)-- (-0.8067314167885665,12.44758633358616);
\draw [line width=2pt,color=zzwwff] (-0.8067314167885665,12.44758633358616)-- (-1.7768146185999911,12.104708595268635);
\draw [line width=2pt,dotted,color=ffffff] (-5.213952438228222,18.118924223927124)-- (-1.7768146185999911,12.104708595268635);
\draw [line width=2pt,dotted,color=ffffff] (-3.3799578800888757,14.909847569669099) -- (-3.7377461363286595,14.97330563160761);
\draw [line width=2pt,dotted,color=ffffff] (-3.3799578800888757,14.909847569669099) -- (-3.253020920499557,15.250327187588148);
\draw [line width=2pt,dotted,color=ffffff] (-3.610809176739339,15.313785249526658) -- (-3.9685974329791227,15.37724331146517);
\draw [line width=2pt,dotted,color=ffffff] (-3.610809176739339,15.313785249526658) -- (-3.4838722171500187,15.65426486744571);
\draw [line width=2pt,dotted,color=ffffff] (-3.14910658343841,14.505909889811537) -- (-3.506894839678194,14.56936795175005);
\draw [line width=2pt,dotted,color=ffffff] (-3.14910658343841,14.505909889811537) -- (-3.022169623849092,14.846389507730589);
\draw [line width=2pt,dotted,color=ffffff] (-13.006188662254232,13.496549044591717)-- (-1.7768146185999911,12.104708595268635);
\draw [line width=2pt,dotted,color=ffffff] (-7.160642969692491,12.772014719557681) -- (-7.425838560874105,12.52359841504865);
\draw [line width=2pt,dotted,color=ffffff] (-7.160642969692491,12.772014719557681) -- (-7.357164719980116,13.0776592248117);
\draw [line width=2pt,dotted,color=ffffff] (-7.622360311161731,12.829242920302669) -- (-7.887555902343345,12.580826615793638);
\draw [line width=2pt,dotted,color=ffffff] (-7.622360311161731,12.829242920302669) -- (-7.818882061449355,13.134887425556688);
\draw [line width=2pt,dotted,color=ffffff] (-6.698925628223253,12.714786518812693) -- (-6.964121219404867,12.46637021430366);
\draw [line width=2pt,dotted,color=ffffff] (-6.698925628223253,12.714786518812693) -- (-6.895447378510878,13.020431024066713);
\draw [line width=2pt,dotted,color=ffffff] (-12.899212443140126,4.437086932628156)-- (-1.7768146185999911,12.104708595268635);
\draw [line width=2pt,dotted,color=ffffff] (-7.146489355588974,8.402931785610923) -- (-7.179572704875014,8.041068753611109);
\draw [line width=2pt,dotted,color=ffffff] (-7.146489355588974,8.402931785610923) -- (-7.496454356865104,8.50072677428568);
\draw [line width=2pt,dotted,color=ffffff] (-7.529537706151144,8.138863742285867) -- (-7.562621055437183,7.777000710286053);
\draw [line width=2pt,dotted,color=ffffff] (-7.529537706151144,8.138863742285867) -- (-7.879502707427273,8.236658730960624);
\draw [line width=2pt,dotted,color=ffffff] (-6.763441005026807,8.66699982893598) -- (-6.7965243543128455,8.305136796936166);
\draw [line width=2pt,dotted,color=ffffff] (-6.763441005026807,8.66699982893598) -- (-7.113406006302935,8.764794817610737);
\draw [line width=2pt,dotted,color=ffffff] (-5,0)-- (-1.7768146185999911,12.104708595268635);
\draw [line width=2pt,dotted,color=ffffff] (-3.3285506101569995,6.277146797867711) -- (-3.118656309019904,5.980526258662725);
\draw [line width=2pt,dotted,color=ffffff] (-3.3285506101569995,6.277146797867711) -- (-3.6581583095800876,6.124182336605907);
\draw [line width=2pt,dotted,color=ffffff] (-3.4482640084429903,5.8275617974009215) -- (-3.238369707305895,5.530941258195936);
\draw [line width=2pt,dotted,color=ffffff] (-3.4482640084429903,5.8275617974009215) -- (-3.7778717078660784,5.674597336139118);
\draw [line width=2pt,dotted,color=ffffff] (-3.208837211871005,6.726731798334502) -- (-2.9989429107339096,6.430111259129516);
\draw [line width=2pt,dotted,color=ffffff] (-3.208837211871005,6.726731798334502) -- (-3.538444911294093,6.573767337072698);
\draw [line width=2pt,dotted,color=ffffff] (2.7922362240260115,4.62237517933541)-- (-1.7768146185999911,12.104708595268635);
\draw [line width=2pt,dotted,color=ffffff] (0.3864757381312806,8.562077908490473) -- (0.7459540281391692,8.50902396480009);
\draw [line width=2pt,dotted,color=ffffff] (0.3864757381312806,8.562077908490473) -- (0.26946757728685294,8.218059809803954);
\draw [line width=2pt,dotted,color=ffffff] (0.6289458672947416,8.165005866113571) -- (0.9884241573026301,8.111951922423188);
\draw [line width=2pt,dotted,color=ffffff] (0.6289458672947416,8.165005866113571) -- (0.5119377064503139,7.820987767427052);
\draw [line width=2pt,dotted,color=ffffff] (0.1440056089678197,8.959149950867376) -- (0.5034838989757082,8.906096007176993);
\draw [line width=2pt,dotted,color=ffffff] (0.1440056089678197,8.959149950867376) -- (0.026997448123392036,8.615131852180856);
\draw [line width=2pt,dotted,color=ffffff] (2.6852600049119006,13.681837291298972)-- (-1.7768146185999911,12.104708595268635);
\draw [line width=2pt,dotted,color=ffffff] (0.23489453766357687,12.815750992832381) -- (0.3611963526142427,13.156466729874642);
\draw [line width=2pt,dotted,color=ffffff] (0.23489453766357687,12.815750992832381) -- (0.5472490336976698,12.630079156692965);
\draw [line width=2pt,dotted,color=ffffff] (0.673550848648332,12.970794893735224) -- (0.7998526635989979,13.311510630777486);
\draw [line width=2pt,dotted,color=ffffff] (0.673550848648332,12.970794893735224) -- (0.9859053446824249,12.785123057595808);
\draw [line width=2pt,dotted,color=ffffff] (-0.20376177332118176,12.660707091929538) -- (-0.07745995837051593,13.001422828971798);
\draw [line width=2pt,dotted,color=ffffff] (-0.20376177332118176,12.660707091929538) -- (0.10859272271291105,12.475035255790122);
\draw (-9.359313157304904,4.560035413955049) node[anchor=north west] {\huge{$\mathbf{Gr_4(X_\lambda^\bullet)}$}};
\draw (-5.69934304237736,9.522706756229361) node[anchor=north west] {\huge{$\mathbf{Gr_2(X_\lambda^\bullet)}$}};
\draw (-7.467294708062698,6.948320997424561) node[anchor=north west] {\huge{$\mathbf{Gr_3(X_\lambda^\bullet)}$}};
\draw (-3.652241113689074,11.476758597249871) node[anchor=north west] {\huge{$\mathbf{Gr_1(X_\lambda^\bullet)}$}};
\draw (-2.3805565822311987,13.554877221827239) node[anchor=north west] {\huge{$\mathbf{Gr_0(X_\lambda^\bullet)}$}};
\begin{scriptsize}
\draw [fill=ffffff] (-1.7768146185999911,12.104708595268635) circle (2.5pt);
\end{scriptsize}
\end{tikzpicture}

}

 \caption{Illustration of the $\varepsilon$-flag $X_\lambda^\bullet$}
    \label{fig:flag}
\end{figure}
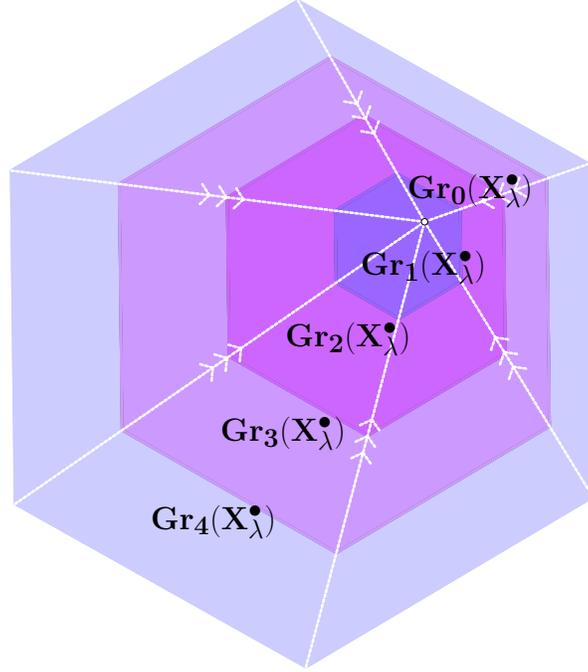

Let us define the following sheaves:

\[F_\varepsilon = \bigoplus_{\lambda \in A} S(X^\bullet_\lambda)^{|C_\lambda|}[(1-\textnormal{sgn}(C_\lambda))/2],\]

\[F = \bigoplus_{\lambda \in A} \kk_{\{x_\lambda\}}^{|C_\lambda|}[(1-\textnormal{sgn}(C_\lambda))/2].\]

Then one has: \begin{align*}
    \chi(F_\varepsilon) &= \chi \left (\bigoplus_{\lambda \in A}S(X^\bullet_\lambda)^{|C_\lambda|}[(1-\textnormal{sgn}(C_\lambda))/2] \right) \\
    &= \sum_{\lambda\in A} C_\lambda \cdot \chi(S(X^\bullet_\lambda))\\
    &= \varphi \quad \textnormal{(Proposition \ref{P:flag}-$(1)$).}
\end{align*}

Similarly:

\[\chi(F) = \sum_{\lambda \in A} C_\lambda \cdot 1_{\{x_\lambda\}}.\]

Moreover, one has by additivity of interleavings (Proposition \ref{p:additivityinterleavings}):

\begin{align*}
    d_C(F_\varepsilon,F) &\leq \max_{\lambda \in A} d_C\left (S(X^\bullet_\lambda)^{|C_\lambda|}[(1-\textnormal{sgn}(C_\lambda))/2] , \kk_{\{x_\lambda\}}^{|C_\lambda|}[(1-\textnormal{sgn}(C_\lambda))/2] \right) \\
    &= \max_{\lambda \in A} d_C\left (S(X^\bullet_\lambda), \kk_{\{x_\lambda\}} \right ) \\ 
    &\leq \frac{\varepsilon}{2} \leq \varepsilon \quad \textnormal{(Proposition \ref{P:flag}-$(2)$).}
\end{align*}

Therefore, one has for all $k>0$: 
\begin{align*}
\delta\left (\varphi, \sum_{\lambda\in A} C_\lambda \cdot 1_{\{x_\lambda\}}\right ) &= \delta\left ( \chi(F), \chi(F_{\frac{1}{k}}) \right). 
\end{align*}

\noindent Since $\delta$ is $d_C$-dominated, $(F_{\frac{1}{k}})_{k> 0}$ is a uniformly bounded sequence of compactly supported constructible sheaves, and $d_C(F, F_{\frac{1}{k}}) \underset{k \longrightarrow +\infty}{\longrightarrow} 0$, we conclude that \[\delta \left (\varphi, \sum_{\lambda\in A} C_\lambda \cdot  1_{\{x_\lambda\}} \right) =0.\]

\end{proof}

\begin{proposition}\label{p:plcase}
Let $\varphi \in \CF_\PL(\V)$ with compact support, and let $x \in \V$. Then one has  
\[\delta \left (\varphi,\left (\int \varphi \dk \right ) \cdot 1_{\{x\}} \right) = 0. \]
\end{proposition}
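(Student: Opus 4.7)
The plan is to reduce everything to Lemma \ref{lemma:reunion} by exploiting the freedom it affords in the choice of representative points. Write $\varphi = \sum_{\alpha \in A} C_\alpha \cdot 1_{X_\alpha}$ with $A$ finite and each $X_\alpha$ a compact convex polytope, as provided by the decomposition proposition for $\CF_\PL(\V)$ recalled above. Since compact convex polytopes are contractible, the definition of $\int$ yields $\int \varphi = \sum_{\alpha \in A} C_\alpha$. For each $\alpha$, I would fix an arbitrary $x_\alpha \in X_\alpha$; Lemma \ref{lemma:reunion} then directly gives
\[\delta\Big(\varphi,\, \sum_{\alpha \in A} C_\alpha \cdot 1_{\{x_\alpha\}}\Big) = 0.\]

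The key step is to introduce the auxiliary PL-constructible function
\[\psi := \sum_{\alpha \in A} C_\alpha \cdot 1_{[x_\alpha, x]},\]
where $[x_\alpha, x] := \{(1-t) x_\alpha + t x \mid t \in [0,1]\}$ is the segment between $x_\alpha$ and $x$ (interpreted as $\{x\}$ when $x_\alpha = x$). Each $[x_\alpha, x]$ is itself a compact convex polytope, so $\psi$ has compact support and its decomposition fits the hypotheses of Lemma \ref{lemma:reunion}. Applying that lemma twice to $\psi$, with two different choices of representative point inside each segment---first picking $x_\alpha$, then picking $x$---yields
\[\delta\Big(\psi,\, \sum_{\alpha \in A} C_\alpha \cdot 1_{\{x_\alpha\}}\Big) = 0 \quad \text{and} \quad \delta\Big(\psi,\, \int \varphi \cdot 1_{\{x\}}\Big) = 0,\]
where the identity $\sum_\alpha C_\alpha = \int \varphi$ was used in the second vanishing.

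Combining these three equalities through two applications of the triangle inequality for $\delta$ then gives $\delta(\varphi, \int \varphi \cdot 1_{\{x\}}) = 0$. There is no genuine obstacle in this argument; the only idea needed is to regard the segments $[x_\alpha, x]$ as a common \emph{bridge} decomposition, from which one recovers both $\sum_\alpha C_\alpha \cdot 1_{\{x_\alpha\}}$ and $\int \varphi \cdot 1_{\{x\}}$ by distinct choices of representative points in Lemma \ref{lemma:reunion}. The only mild care is the degenerate case $x_\alpha = x$, which is harmless since a single point is still a compact convex polytope.
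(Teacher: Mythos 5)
Your proof is correct and follows essentially the same route as the paper: the paper also introduces the bridge function $\psi = \sum_{\alpha\in A} C_\alpha \cdot 1_{[x_\alpha,x]}$, applies Lemma \ref{lemma:reunion} to it with the two choices of representative points $x_\alpha$ and $x$, applies the same lemma to $\varphi$ itself, and concludes by the triangle inequality. The only differences are in the ordering of the steps, which is immaterial.
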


\begin{proof}
Given $u,v \in \V$, we set $[u,v] = \{t\cdot u + (1-t)\cdot v \mid t \in [0,1]\}$. Let us write $\varphi = \sum_{\lambda \in A} C_\lambda \cdot 1_{X_\lambda}$, with $A$ finite, $C_\lambda \in \Z$ and $X_\lambda$ compact and convex polytopes. For $\lambda \in A$, let $x_\lambda \in X_\lambda$. Then by Lemma \ref{lemma:reunion} applied to $\psi = \sum_{\lambda\in A} C_\lambda \cdot  1_{[x_\lambda,x]}$, one has:

\begin{align*}
    \delta \left (\psi, \sum_{\lambda\in A} C_\lambda \cdot  1_{\{x_\lambda\}}\right ) = 0 =  \delta \left (\psi, \sum_{\lambda\in A} C_\lambda \cdot  1_{\{x\}}\right ).
\end{align*}

Therefore:

\[ \delta \left (\sum_{\lambda\in A} C_\lambda \cdot  1_{\{x_\lambda\}}, \sum_{\lambda\in A} C_\lambda \cdot  1_{\{x\}} \right ) = 0.\]

We now apply Lemma \ref{lemma:reunion} to $\varphi$: 

\begin{align*}
    \delta \left(\varphi, \left (\int \varphi \dk \right ) \cdot  1_{\{x\}}  \right) &= \delta \left(\varphi, \sum_{\lambda\in A} C_\lambda \cdot  1_{\{x\}}  \right)  \\ &\leq \delta \left (\varphi, \sum_{\lambda\in A} C_\lambda \cdot  1_{\{x_\lambda\}} \right)  + \delta \left (\sum_{\lambda\in A} C_\lambda \cdot  1_{\{x_\lambda\}}, \sum_{\lambda\in A} C_\lambda \cdot  1_{\{x\}}  \right) \\
    &= 0.
\end{align*}
\end{proof}

\subsection{General case}\label{section:generalcase}

In this final section, we generalize the previous results to arbitrary stratifications, by piecewise linear approximation (Theorem \ref{th:PLapprox}).

\begin{lemma}\label{lem:gencase}
Let $\varphi \in \CF(\V)$ with compact support, and let $x \in \V$. Then one has 

\[\delta \left ( \varphi,\left ( \int \varphi \dk \right ) \cdot 1_{\{x\}} \right ) = 0. \]
\end{lemma}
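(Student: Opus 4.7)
The plan is to reduce the general constructible case to the PL case already handled in Proposition \ref{p:plcase}, using the Kashiwara--Schapira PL approximation theorem (Theorem \ref{th:PLapprox}) together with the $d_C$-domination hypothesis on $\delta$.

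First, I would lift $\varphi$ to a compactly supported constructible sheaf. Writing $\varphi = \sum_\alpha C_\alpha \cdot 1_{X_\alpha}$ with the $X_\alpha$ compact contractible subanalytic subsets, the sheaf
\[
F := \bigoplus_\alpha \kk_{X_\alpha}^{|C_\alpha|}\bigl[(1-\operatorname{sgn}(C_\alpha))/2\bigr]
\]
lies in $\D^b_{\R c}(\kk_\V)$, has compact support and satisfies $\chi(F) = \varphi$.

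Next, fix $\varepsilon>0$ and apply Theorem \ref{th:PLapprox} to obtain $F_\varepsilon \in \D^b_{\PL}(\kk_\V)$ with $d_C(F, F_\varepsilon) \leq \varepsilon$ and $\supp(F_\varepsilon) \subset \supp(F) + B(0,\varepsilon)$; in particular $F_\varepsilon$ still has compact support, so $\varphi_\varepsilon := \chi(F_\varepsilon)$ belongs to $\CF_\PL(\V)$ and has compact support. Since $d_C(F,F_\varepsilon) < +\infty$, Proposition \ref{p:sections} combined with Lemma \ref{lem:integralsection} yields $\int\varphi_\varepsilon = \int\varphi$. The $d_C$-domination hypothesis gives
\[
\delta(\varphi, \varphi_\varepsilon) \;=\; \delta(\chi(F), \chi(F_\varepsilon)) \;\leq\; \Phi(d_C(F,F_\varepsilon)) \;\leq\; \Phi(\varepsilon).
\]

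Now apply Proposition \ref{p:plcase} to the PL-constructible function $\varphi_\varepsilon$: it asserts $\delta(\varphi_\varepsilon, \int\varphi_\varepsilon \cdot 1_{\{x\}}) = 0$, and using $\int\varphi_\varepsilon = \int\varphi$ this reads $\delta(\varphi_\varepsilon, \int\varphi \cdot 1_{\{x\}}) = 0$. The triangle inequality for $\delta$ then gives
\[
\delta\!\left(\varphi, \int\varphi \cdot 1_{\{x\}}\right) \;\leq\; \delta(\varphi, \varphi_\varepsilon) + \delta\!\left(\varphi_\varepsilon, \int\varphi \cdot 1_{\{x\}}\right) \;\leq\; \Phi(\varepsilon).
\]
Letting $\varepsilon \to 0$ and invoking $\Phi(\varepsilon) \to 0$ yields the claim.

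The only genuinely delicate point is ensuring that the PL approximation at the sheaf level really does produce a PL-constructible \emph{function} with compact support to which Proposition \ref{p:plcase} applies, and that its Euler integral agrees with $\int\varphi$ so that the concentration point $\int\varphi\cdot 1_{\{x\}}$ is the same on both sides of the triangle inequality; both facts fall out immediately from the support control in Theorem \ref{th:PLapprox} and from Proposition \ref{p:sections} applied to $F$ and $F_\varepsilon$. Everything else is a direct assembly of earlier results.
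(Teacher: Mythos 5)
Your argument is correct and is essentially the paper's own proof: lift $\varphi$ to a compactly supported constructible sheaf, PL-approximate it via Theorem \ref{th:PLapprox}, use Proposition \ref{p:sections} and Lemma \ref{lem:integralsection} to see the Euler integral is preserved, apply Proposition \ref{p:plcase} to the PL approximation, and conclude by the triangle inequality, the domination hypothesis, and a limit. The only cosmetic slip is the step $\Phi(d_C(F,F_\varepsilon))\leq\Phi(\varepsilon)$, which presumes a monotonicity of $\Phi$ that is not assumed; just keep the bound $\Phi(d_C(F,F_\varepsilon))$ and note that $d_C(F,F_\varepsilon)\leq\varepsilon\to 0$ together with $\Phi(x)\to 0$ as $x\to 0$ finishes the argument, exactly as in the paper.
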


\begin{proof}
Let $F \in \D^b_{\R c}(\kk_\V)$ with compact support such that $\varphi = \chi(F)$. According to Theorem \ref{th:PLapprox}, for all $n\in \Z_{> 0}$, there exists $F_n \in \D^b_\PL(\kk_\V)$ such that $d_C(F,F_n) \leq \frac{1}{n}$, $\textnormal{supp}(F_n) \subset T_{\frac{1}{n}}(\textnormal{supp}(F))$, and the sequence $(F_n)$ is uniformly bounded. In particular, $F_n$ has compact support for all $n \geq 1$. Moreover by Proposition \ref{p:sections}, one has for all $n \geq 1$,

\[\Rr \Gamma (\V ; F) \simeq \Rr \Gamma (\V ; F_n). \]

Therefore, $\int \chi(F_n) \dk = \int \varphi \dk$ according to Lemma \ref{lem:integralsection}. Consequently, for all $n >0$: 

\begin{align*}\delta \left (\varphi, \left (\int \varphi \dk \right ) \cdot 1_{\{x\}} \right ) &\leq \delta \left (\varphi, \chi(F_n) \right ) + \delta\left (\chi(F_n), \left (\int \varphi \dk \right ) \cdot 1_{\{x\}}\right) \\
&= \delta \left (\varphi, \chi(F_n) \right ) + \delta\left (\chi(F_n), \left ( \int  \chi(F_n) \dk \right) \cdot 1_{\{x\}}\right) \\ 
&= \delta \left (\varphi, \chi(F_n) \right ) \quad \textnormal{(Proposition \ref{p:plcase})}  \\ 
&= \delta \left (\chi(F), \chi(F_n) \right ).
\end{align*} 

\noindent Since $\delta$ is $d_C$-dominated, $(F_n)$ is a uniformly bounded sequence of constructible compactly supported sheaves, and $d_C(F,F_n) \underset{n \longrightarrow +\infty}{\longrightarrow} 0$, we conclude that:

\[\delta \left (\varphi, \left (\int \varphi \dk \right ) \cdot 1_{\{x\}} \right ) = 0.\]
\end{proof}

\begin{theorem}\label{th:main}
Let $\delta$ be a $d_C$-dominated pseudo-extended metric on $\CF(\V)$, and let $\varphi,\psi \in \CF(\V)$ with compact supports be such that $\int \varphi \dk = \int \psi \dk $. Then: $$\delta(\varphi, \psi) = 0. $$
\end{theorem}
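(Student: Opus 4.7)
The plan is to reduce Theorem \ref{th:main} to Lemma \ref{lem:gencase} by a single application of the triangle inequality, interposing the common Dirac-type function $c \cdot 1_{\{x\}}$ at an arbitrary point $x \in \V$, where $c := \int\varphi = \int\psi$ (equality of these integers being the standing hypothesis).

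Concretely, I fix such an $x$ and write
\[
\delta(\varphi, \psi) \;\leq\; \delta\bigl(\varphi,\; c \cdot 1_{\{x\}}\bigr) \;+\; \delta\bigl(c \cdot 1_{\{x\}},\; \psi\bigr).
\]
The equality of integrals means that $c \cdot 1_{\{x\}}$ is literally both $\int\varphi \cdot 1_{\{x\}}$ and $\int\psi \cdot 1_{\{x\}}$. Lemma \ref{lem:gencase} applied to $\varphi$ forces the first summand to vanish, and, applied to $\psi$, should kill the second one as well, yielding $\delta(\varphi,\psi) = 0$.

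The only point requiring care, and what I expect to be the main (still very minor) obstacle, is that Lemma \ref{lem:gencase} as stated gives $\delta(\psi,\, \int\psi \cdot 1_{\{x\}}) = 0$, whereas the second summand above is $\delta(\int\psi \cdot 1_{\{x\}},\, \psi)$; a pseudo-extended metric in the sense of this paper is not assumed symmetric. I resolve this by observing that the proof chain Lemma \ref{lemma:reunion} $\Rightarrow$ Proposition \ref{p:plcase} $\Rightarrow$ Lemma \ref{lem:gencase} goes through \emph{verbatim} after transposing the two arguments of $\delta$ at every step: the $d_C$-domination hypothesis $\delta(\chi(F),\chi(G)) \leq \Phi(d_C(F,G))$ is imposed for all ordered pairs $(F,G)$, and $d_C$ itself is symmetric, so every bound invoked in those proofs remains valid after swapping. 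The transposed form of Lemma \ref{lem:gencase} applied to $\psi$ then gives $\delta(\int\psi \cdot 1_{\{x\}},\, \psi) = 0$, and the two vanishings combine to finish the proof. In practice I would handle this either by a one-sentence remark recording that the proof of Lemma \ref{lem:gencase} also yields the swapped identity, or by stating a transposed companion lemma; beyond this bookkeeping, the argument is a pure triangle-inequality collapse.
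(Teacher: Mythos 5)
Your proof is essentially identical to the paper's: the paper applies the same triangle inequality through the common function $\int\varphi\cdot 1_{\{0\}}=\int\psi\cdot 1_{\{0\}}$ and cites Lemma \ref{lem:gencase} for both summands. Your remark about the order of arguments is reasonable care (the paper's stated definition of a pseudo-extended metric only demands the triangle inequality, and the paper silently uses the swapped form $\delta(\int\psi\cdot 1_{\{0\}},\psi)=0$), and your observation that the chain Lemma \ref{lemma:reunion} $\Rightarrow$ Proposition \ref{p:plcase} $\Rightarrow$ Lemma \ref{lem:gencase} transposes verbatim correctly settles it.
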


\begin{proof}

By the above lemma, \begin{align*}
    \delta(\varphi,\psi) &\leq \delta\left (\varphi, \left (\int \varphi \dk \right ) \cdot 1_{\{0\}}\right) + \delta\left (\left (\int \psi \dk \right ) \cdot 1_{\{0\}}, \psi\right ) \\ &= 0 \quad \textnormal{(Lemma \ref{lem:gencase})}.
\end{align*}
\end{proof}

\begin{corollary}\label{cor:main}
Let $F,G \in \D^b_{\R c}(\kk_\V)$ with compact support, such that $d_C(F,G) < +\infty$. Then: \[\delta(\chi(F), \chi(G)) = 0.\]

\end{corollary}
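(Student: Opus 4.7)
The plan is to observe that Corollary \ref{cor:main} is essentially an immediate consequence of Theorem \ref{th:main} once one verifies that the hypothesis $\int \chi(F) = \int \chi(G)$ holds. The only non-trivial content is the translation from the sheaf-level statement $d_C(F,G)<+\infty$ to the equality of Euler integrals on the function side.

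First, I would unpack the hypothesis: since $d_C(F,G)<+\infty$, Proposition \ref{p:sections} yields an isomorphism
\[
\Rr\Gamma(\V;F) \simeq \Rr\Gamma(\V;G)
\]
in $\D^b(\kk)$. Taking graded dimensions and forming the alternating sum gives
\[
\chi\bigl(\Rr\Gamma(\V;F)\bigr) = \chi\bigl(\Rr\Gamma(\V;G)\bigr).
\]
Since $F$ and $G$ have compact support, so do $\chi(F)$ and $\chi(G)$, and Lemma \ref{lem:integralsection} identifies these two alternating sums with $\int \chi(F)$ and $\int\chi(G)$ respectively. Hence $\int\chi(F)=\int\chi(G)$.

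With this in hand, the conclusion follows by a direct invocation of Theorem \ref{th:main} applied to the pair of compactly supported constructible functions $\varphi := \chi(F)$ and $\psi := \chi(G)$, which yields $\delta(\chi(F),\chi(G))=0$. There is no real obstacle here: the corollary is simply repackaging the main theorem in terms of sheaves through the sheaf-function correspondence, and the only input beyond the main theorem is the standard compatibility of convolution distance with global sections (Proposition \ref{p:sections}) combined with the Euler-integral formula (Lemma \ref{lem:integralsection}).
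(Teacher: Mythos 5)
Your proof is correct and matches the paper's (implicit) argument exactly: the paper notes right after the definition of $d_C$-domination that $d_C(F,G)<+\infty$ forces $\int\chi(F)=\int\chi(G)$ via Proposition \ref{p:sections} and Lemma \ref{lem:integralsection}, and the corollary is then a direct application of Theorem \ref{th:main}. Nothing to add.
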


\begin{corollary}
Let $X$ be a real analytic manifold, and let $\varphi \in \CF(X)$ with compact support. Also, consider $f,g : X \longrightarrow \V$ some morphisms of real analytic manifolds proper on $\textnormal{supp}(\varphi)$. Then:

\[\delta (f_\ast \varphi, g_\ast \varphi ) = 0.\]
\end{corollary}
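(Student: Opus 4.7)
The plan is to deduce this corollary directly from Theorem \ref{th:main}, which states that $\delta$ vanishes on any pair of compactly supported constructible functions sharing the same Euler integral. Thus I only need to check two things about $f_\ast\varphi$ and $g_\ast\varphi$: that both lie in $\CF(\V)$ with compact support, and that $\int f_\ast \varphi = \int g_\ast\varphi$.

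For the constructibility and compactness of support, I would invoke part (1) of Theorem \ref{th:functoriality}: since $f$ is proper on $\textnormal{supp}(\varphi)$, the function $f_\ast\varphi$ belongs to $\CF(\V)$. Moreover, $\textnormal{supp}(f_\ast \varphi) \subset f(\textnormal{supp}(\varphi))$, and the right-hand side is compact because $f_{|\textnormal{supp}(\varphi)}$ is a proper continuous map out of the compact set $\textnormal{supp}(\varphi)$. The same reasoning handles $g_\ast \varphi$.

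For the equality of Euler integrals, the idea is to write integration as pushforward to a point and then collapse via the functoriality of Theorem \ref{th:functoriality}-(3). Let $a_X : X \to \{pt\}$ and $a_\V : \V \to \{pt\}$. Since $a_X = a_\V \circ f$ and both composites are proper on $\textnormal{supp}(\varphi)$, one gets
\begin{equation*}
\int f_\ast \varphi \;=\; a_{\V \ast} f_\ast \varphi \;=\; (a_\V \circ f)_\ast \varphi \;=\; a_{X\ast} \varphi \;=\; \int \varphi,
\end{equation*}
and the identical argument with $g$ in place of $f$ gives $\int g_\ast \varphi = \int \varphi$. Hence $\int f_\ast \varphi = \int g_\ast\varphi$.

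Applying Theorem \ref{th:main} to the pair $(f_\ast\varphi, g_\ast\varphi) \in \CF(\V) \times \CF(\V)$ with compact supports and equal integrals then yields $\delta(f_\ast\varphi, g_\ast\varphi) = 0$, as required. I do not anticipate any serious obstacle here: the whole argument is a formal consequence of the functorial properties of $\ast$-pushforward of constructible functions together with the main theorem, and no further sheaf-theoretic input (such as the PL-approximation machinery or $\varepsilon$-flags) is needed at this stage.
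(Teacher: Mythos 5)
Your proposal is correct and follows essentially the same route as the paper: establish constructibility and compact support of $f_\ast\varphi$, $g_\ast\varphi$ via the functoriality of pushforward, compute $\int f_\ast\varphi = \int\varphi = \int g_\ast\varphi$ by factoring through the constant map and Theorem \ref{th:functoriality}-(3), then apply Theorem \ref{th:main}. Your justification of compactness via $\textnormal{supp}(f_\ast\varphi) \subset f(\textnormal{supp}(\varphi))$ is in fact slightly more careful than the paper's phrasing.
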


\begin{proof}
By \cite[Theorem 2.3]{Sch91}, $f_\ast \varphi$ and $g_\ast \varphi$ are indeed constructible and have compact support by the hypothesis. Let $a_X : X \longrightarrow \{\textnormal{pt}\}$ and $a_\V : \V \longrightarrow \{\textnormal{pt}\}$ be the constant maps. Then by \cite[Section 2]{Sch91}, one has:

\begin{align*}
    \int f_\ast \varphi \dk &= a_{\V \ast}(f_\ast \varphi) \\
    &= (a_{\V}\circ f)_{\ast} \varphi \quad \textnormal{(Theorem \ref{th:functoriality}-3)} \\
    &= a_{X \ast} \varphi \\
    &= \int \varphi \dk.
\end{align*}

Similarly, $\int g_\ast \varphi \dk = \int \varphi \dk = \int f_\ast \varphi \dk$. Since both $f_\ast \varphi$ and $g_\ast \varphi$ have compact support, we conclude the proof by applying Theorem \ref{th:main}.
\end{proof}

\section{Consequences for Topological Data Analysis}

This section is devoted to applying our main result to constructions of Topological Data Analysis (TDA). We start by recalling standard definitions concerning multi-parameter persistence modules, and review results of \cite{BP21} that allows to compare, the categories of persistence modules with $d$ parameters equipped with the interleaving distance $d_I$, and sheaves on a $\R^d$ endowed with the convolution. This bridge allow transferring corollary \ref{cor:main} to the setting of persistence, and to prove that there cannot exist any non-trivial $d_I$-continuous additive invariants of persistence modules. By getting into the persistent world, we are able to apply Lesnick's universality theorem, that allow removing any occurrence of the interleaving distance in the statements. We end the section by applying our results to several common TDA construction.

\subsection{Persistence and Sheaves}

For a general introduction to multi-parameter persistence, we refer the reader to \cite{BL22}. Let $d \geq 0$. We equip $\R^d$ with the partial order $\leq$, defined by $(x_1,...,x_d) \leq (y_1,...,y_d)$ iff $x_i \leq y_i$ for all $i$. We denote $(\R^d, \leq)$ for the associated poset category. The category of \emph{persistence modules} with $d$-parameters, denoted by $\mathrm{Pers}_\kk(\R^d)$, is the category of functors $(\R^d,\leq) \longrightarrow \Mod(\kk)$ and natural transformations.

Persistence modules are usually compared using the interleaving distance, which is defined as follows. Let $\varepsilon\geq 0$ and $M \in \mathrm{Pers}_\kk(\R^d)$. The $\varepsilon$-\emph{shift} of $M$ is the persistence modules $M[\varepsilon]$ defined, for $x \leq y \in \R^d$, by:

\[M[\varepsilon](x) = M(x + (\varepsilon,...,\varepsilon)), ~~~M[\varepsilon](x\leq y ) = M(x + (\varepsilon,...,\varepsilon) \leq y + (\varepsilon,...,\varepsilon)).\]

The collection of linear maps $(M(x \leq x + (\varepsilon,..., \varepsilon)))_{x\in \R^d}$ induces a natural transformation $M \longrightarrow M[\varepsilon]$, denoted $\tau_\varepsilon^M$. An $\varepsilon$-\emph{interleaving} between two persistence modules $M$ and $N$ in $\mathrm{Pers}_\kk(\R^d)$ is the data of two morphisms $f : M \longrightarrow N[\varepsilon]$ and $g : N \longrightarrow M[\varepsilon]$ such that $g[\varepsilon] \circ f = \tau_{2 \varepsilon}^M$ and $f[\varepsilon] \circ g = \tau_{2 \varepsilon}^N$. If there exists an $\varepsilon$-interleaving between $M$ and $N$, we say that they are $\varepsilon$-interleaved, and write $M \sim_\varepsilon N$.

\begin{definition}
The interleaving distance between the persistence modules $M$ and $N$ in $\mathrm{Pers}_\kk(\R^d)$ is the possibly infinite quantity:

\[d_I (M,N) := \inf \{\varepsilon \geq 0 \mid M \sim_\varepsilon N\}.\]
\end{definition}

\begin{remark}
\begin{enumerate}
    \item The interleaving distance is an extended-pseudo metric on the class of objects of $\mathrm{Pers}_\kk(\R^d)$.
    
    \item By exactness of the $\varepsilon$-shift functor, the interleaving distance readily extends to the bounded derived category of persistence modules $\D^b(\mathrm{Pers}_\kk(\R^d))$. In the following, we will still denote it $d_I$.
\end{enumerate}
\end{remark}

We now introduce the $\gamma$-topology after Kashiwara-Schapira \cite[Section 3.5]{KS90}, as an intermediate between the euclidean topology and the downset (or Alexandrov) topology. Let $\gamma = (\R_{\geq 0})^d$. An open set $U \subset \R^d$ is $\gamma$-\emph{open} if it satisfies $U + \gamma = U$. The set of $\gamma$-open subsets of $\R^d$ indeed forms a topology of $\R^d$, named the $\gamma$-topology. We denote the associated topological space by $\R^d_\gamma$. The identity map $\varphi_\gamma :  \R^d \longrightarrow \R^d_\gamma $, $x \mapsto x$, is continuous, and induces an adjunction:

\[\begin{tikzcd}
 \varphi_\gamma^{-1} : \D^b(\kk_{\R^d_\gamma}) \arrow[r,shift left=0.4ex] \arrow[r,draw=none]& 
    \D^b(\kk_{\R^d}) : \Rr \varphi_{\gamma \ast} \arrow[l,shift left=0.75ex].
\end{tikzcd} \]

Following \cite[Section 4.2]{BP21}, it is possible to define an interleaving distance on $\D^b(\kk_{\R^d_\gamma})$, that we write $d_I^\gamma$. Also, we endow $\R^d$ with the norm $\|\cdot\|_\infty$ defined by $\|x\|_\infty := \max_i |x_i|$, and denote by $d_C$ the associated convolution distance on $\D^b(\kk_{\R^d})$.

\begin{theorem}[\cite{BP22} ]\label{th:comparisionconvolution}
For all $F,G \in \D^b(\kk_{\R^d})$, and $H,I \in \D^b(\kk_{\R^d_\gamma})$ one has:
\begin{enumerate}
    \item $ d_I^\gamma(\Rr \varphi_{\gamma \ast} F, \Rr \varphi_{\gamma \ast} G) \leq d_C(F,G)$;
    
    \item $d_C (\varphi_\gamma^{-1} H, \varphi_\gamma^{-1} I ) =d_I^\gamma (H,I)$.
\end{enumerate}

\end{theorem}

Moreover, in \cite{BP21}, the authors introduce a pair of adjoint functors:

\[\begin{tikzcd}
 \alpha^{-1} :   \D^b(\mathrm{Pers}_\kk(\R^d)) \arrow[r,shift left=0.4ex] \arrow[r,draw=none]& 
    \D^b(\kk_{\R^d_\gamma}): \Rr \alpha_\ast \arrow[l,shift left=0.75ex],
\end{tikzcd} \]

and prove the following.

\begin{theorem}[\cite{BP21}]\label{th:comparisons}
For all $F,G \in \D^b(\kk_{\R^d_\gamma})$, and $M,N \in \D^b(\mathrm{Pers}_\kk(\R^d))$, one has:

\begin{enumerate}
    \item $d_I(\Rr \alpha_\ast F, \Rr \alpha_\ast G ) = d_I^\gamma(F,G)$;
    \item $d_I^\gamma( \alpha^{-1}M, \alpha^{-1} N) = d_I(M,N)$.
\end{enumerate}
\end{theorem}

We will also need the following lemma, that was not included in \cite{BP21}.

\begin{lemma}\label{lem:beta}
Let $M\in \D^b(\mathrm{Pers}_\kk(\R^d))$, then $d_I(M, \Rr \alpha_\ast\alpha^{-1} M) = 0$.
\end{lemma}

\begin{proof}
By \cite[Fact 2.10]{BP21} and \cite[Proposition 2.11-(i)]{BP21}, one has \[\alpha^{-1}\circ   \Rr \alpha_\ast \simeq \mathrm{id}_{\D^b(\kk_{\R^d_\gamma})}.\] 

Therefore, for any $M\in \D^b(\mathrm{Pers}_\kk(\R^d))$, by theorem \ref{th:comparisons}-$(1)$, one has:
\begin{align*}
d_I(M, \Rr \alpha_\ast\alpha^{-1} M) &= d_I^\gamma(\alpha^{-1} M, \alpha^{-1} (\Rr \alpha_\ast\alpha^{-1} M)) \\ 
&= d_I^\gamma (\alpha^{-1} M,  (\alpha^{-1} \Rr \alpha_\ast)\alpha^{-1} M)) \\ 
&=  d_I^\gamma (\alpha^{-1} M, \alpha^{-1} M) \\ 
&= 0.
    \end{align*}
\end{proof}

\noindent Combining the above results, we obtain the following adjunction: 

\[\begin{tikzcd}
 (\alpha\circ\varphi_\gamma)^{-1} :  (\D^b(\mathrm{Pers}_\kk(\R^d)), d_I) \arrow[r,shift left=0.4ex] \arrow[r,draw=none]& 
    (\D^b(\kk_{\R^d}), d_C)  : \Rr (\alpha\circ\varphi_\gamma)_\ast \arrow[l,shift left=0.75ex],
\end{tikzcd} \]

\noindent where the left adjoint functor is objectwise distance preserving, and the right adjoint is objectwise $1$-Lipschitz. 

\begin{definition}
A persistence module $M \in \D^b(\mathrm{Pers}_\kk(\R^d))$ is constructible, if $ (\alpha\circ\varphi_\gamma)^{-1} M \in \D^b_{\R c}(\kk_{\R^d})$.
\end{definition}

\begin{remark}
Constructibility of persistence modules is a rather general finiteness condition. Indeed, standard finiteness conditions such as being finitely presented or cubically encoded both imply constructibility (see \cite{Mil20}).
\end{remark}

We denote by $\D^b_{\R c }(\mathrm{Pers}_\kk(\R^d))$ the full subcategory of $\D^b(\mathrm{Pers}_\kk(\R^d))$ whose objects are constructible persistence modules.

\begin{proposition}
The category $\D^b_{\R c }(\mathrm{Pers}_\kk(\R^d))$ is a triangulated sub-category of $\D^b(\mathrm{Pers}_\kk(\R^d))$. 
\end{proposition}

\begin{proof}
This is a direct consequence of $\D^b_{\R c}(\kk_{\R^d})$ being a triangulated category and $ (\alpha\circ\varphi_\gamma)^{-1}$ being a triangulated functor.
\end{proof}

\begin{definition}
A constructible persistence module $M\in \D^b_{ }(\mathrm{Pers}_\kk(\R^d))$ is compactly generated, if there exists $F \in \D^b_{\R c}(\kk_{\R^d})$ compactly supported such that $ M \simeq \Rr (\alpha\circ\varphi_\gamma)_\ast F$.
\end{definition}

\subsection{Non-existence of additive stable invariants of persistence modules}

In this section, we identify $K(\D^b_{\R c}(\kk_{\R^d}))$ with $\CF(\R^d)$, according to the sheaf-function correspondence (Theorem \ref{th:correspondence}). Since $\D^b_{\R c }(\mathrm{Pers}_\kk(\R^d))$ is triangulated, its Grothendieck group is well-defined. We let $\kappa$ be the map $\mathrm{ob}(\D^b_{\R c }(\mathrm{Pers}_\kk(\R^d))) \longrightarrow K(\D^b_{\R c }(\mathrm{Pers}_\kk(\R^d)))$ sending a constructible persistence module to its $K$-class.

\begin{definition}
A pseudo-extended metric $\delta$ on $K(\D^b_{\R c }(\mathrm{Pers}_\kk(\R^d)))$ is said to be $d_I$-dominated if for all uniformly bounded sequences $(M_n) \in \D^b_{\R c }(\mathrm{Pers}_\kk(\R^d)) $ of compactly generated persistence modules, and $M \in \D^b_{\R c }(\mathrm{Pers}_\kk(\R^d))$, one has:

\[d_I(M,M_n) \underset{n \longrightarrow +\infty}{\longrightarrow} 0  ~~\Longrightarrow \delta(\kappa(M),\kappa(M_n)) ~~ \underset{n \longrightarrow +\infty}{\longrightarrow} 0. \]
\end{definition}

\noindent Any triangulated functor $T : \mathscr{C} \longrightarrow \mathscr{C'}$ between triangulated categories, induces a group morphism $K(\mathscr{C}) \longrightarrow K(\mathscr{C'})$, that, for simplicity, we keep denoting by $T$. Given $\delta$ a pseudo-extended metric on $K(\D^b_{\R c }(\mathrm{Pers}_\kk(\R^d)))$, we let $\delta_\ast$ be the pseudo-extended metric defined on $\CF(\R^d)$ by: 

\[ \delta_\ast (\varphi, \psi) := \delta(\Rr (\alpha\circ\varphi_\gamma)_\ast \varphi, \Rr (\alpha\circ\varphi_\gamma)_\ast \psi). \]

\begin{proposition}
The pseudo-extended metric $\delta$ on $K(\D^b_{\R c }(\mathrm{Pers}_\kk(\R^d)))$ is  $d_I$-dominated if and only if $\delta_\ast$ is $d_C$-dominated.
\end{proposition}

\begin{proof}

Assume that $\delta$ is $d_I$ dominated. Let $(F_n)$ and $F$ be compactly supported in $\D^b_{\R c }(\kk_{\R^d})$  constructible sheaves such that, $(F_n)$ is uniformly bounded, and \[d_C(F,F_n) \underset{n \longrightarrow +\infty}{\longrightarrow} 0 .\] 

\noindent Thus, by theorem \ref{th:comparisionconvolution}, \[d_I(\Rr (\alpha\circ\varphi_\gamma)_\ast F,\Rr (\alpha\circ\varphi_\gamma)_\ast F_n) \underset{n \longrightarrow +\infty}{\longrightarrow} 0.\] 

 \noindent The functor $\Rr (\alpha\circ\varphi_\gamma)_\ast$ has finite cohomological dimension \cite[Proposition 3.11]{BP21}, thus, the sequence $(\Rr (\alpha\circ\varphi_\gamma)_\ast F_n)$ is uniformly bounded, and compactly generated by definition.  Since $\delta$ is $d_I$-dominated, we deduce that 
\begin{align*}
    \delta(\kappa(\Rr (\alpha\circ\varphi_\gamma)_\ast F),\kappa(\Rr (\alpha\circ\varphi_\gamma)_\ast F_n))) &= \delta (\Rr (\alpha\circ\varphi_\gamma)_\ast  \chi(F),\Rr (\alpha\circ\varphi_\gamma)_\ast \chi(F_n))) \\
    &= \delta_\ast(\chi(F), \chi(F_n)) \underset{n \longrightarrow +\infty}{\longrightarrow} 0.
\end{align*}

\noindent Therefore, $\delta_\ast$ is $d_C$-dominated. The proof of the converse works similarly.
\end{proof}

\begin{corollary}
Let $\delta$ be a $d_I$-dominated pseudo-extended metric on $K(\D^b_{\R c }(\mathrm{Pers}_\kk(\R^d)))$. Then for all $M,N \in \D^b_{\R c }(\mathrm{Pers}_\kk(\R^d)) $ compactly generated, such that $d_I(M,N) < +\infty$, one has $\delta(\kappa(M), \kappa(N)) = 0$.
\end{corollary}

\begin{proof}
 According to Theorem \ref{th:comparisionconvolution}, one has $d_I(M,N) = d_C ( (\alpha\circ\varphi_\gamma)^{-1} M,  (\alpha\circ\varphi_\gamma)^{-1} N) < +\infty $. Also, since $\delta$ is $d_I$-dominated, $\delta_\ast$ is $d_C$-dominated. Therefore, by corollary \ref{cor:main}, 
 
 \begin{align}
  \delta_\ast(\chi((\alpha\circ\varphi_\gamma)^{-1} M), (\alpha\circ\varphi_\gamma)^{-1} N)) \\ 
  = \delta(\kappa(\Rr(\alpha\circ\varphi_\gamma)_\ast(\alpha\circ\varphi_\gamma)^{-1} M), \kappa(\Rr(\alpha\circ\varphi_\gamma)_\ast(\alpha\circ\varphi_\gamma)^{-1} N)) \\ = 0.
     \end{align}

Note that by \cite[Corollary 1.6]{KS18}, $\Rr \varphi_{\gamma \ast} \circ \varphi_\gamma^{-1} \simeq \mathrm{id}_{\D^b(\kk_{\V_\gamma})}$. Therefore, \[\Rr(\alpha\circ\varphi_\gamma)_\ast(\alpha\circ\varphi_\gamma)^{-1} M \simeq \Rr \alpha_\ast \alpha^{-1} M.\]

\noindent By Lemma \ref{lem:beta}, $d_I(M,\Rr \alpha_\ast \alpha^{-1} M) = 0$. Since $\delta$ is $d_I$-dominated, $\delta(\kappa(M), \kappa(\Rr \alpha_\ast \alpha^{-1} M)) = 0$. Similarly, $\delta(\kappa(N), \kappa(\Rr \alpha_\ast \alpha^{-1} M)) = 0$. From equation $(4.3)$, we deduce that 

\begin{align*}
  &   \delta(\kappa(M), \kappa(N))  \\
    & \leq \delta(\kappa(M), \kappa(\Rr \alpha_\ast \alpha^{-1} M)) + \delta(\kappa(\Rr \alpha_\ast \alpha^{-1} M), \kappa(\Rr \alpha_\ast \alpha^{-1} N)) \\ & + \delta(\kappa(\Rr \alpha_\ast \alpha^{-1} N), \kappa(N)) \\
    & = 0 + 0 + 0.
\end{align*}

\end{proof}

Let $(G,+)$ be an abelian group, endowed with a pseudo-extended metric $\delta$. We think of $G$ as a group of invariants of persistence modules, and of $\delta$ as a way of measuring dissimilarity between invariants.

\begin{definition}
Let $\mathscr{A}$ be an abelian category. A map  $\lambda : \mathrm{ob}(\D^b(\mathscr{A})) \longrightarrow G$ is said to be additive with respect to exact triangles if for all exact triangles $X \longrightarrow Y \longrightarrow Z \stackrel{+1}{\longrightarrow}$ in $\D^b(\mathscr{A})$, one has, $\lambda(Y) = \lambda(X) + \lambda(Z)$. 

\noindent Similarly, a map $\lambda : \mathrm{ob}(\mathscr{A}) \longrightarrow G$ is said to be additive with respect to short exact sequences if for all short exact sequences $0 \longrightarrow X \longrightarrow Y \longrightarrow Z \longrightarrow 0$ in $\mathscr{A}$, one has, $\lambda(Y) = \lambda(X) + \lambda(Z)$. 
\end{definition}

\begin{theorem}\label{th:derivedpersistence}
     Let $\lambda : \mathrm{ob}(\D^b_{\R c }(\mathrm{Pers}_\kk(\R^d))) \longrightarrow G$ be an additive map with respect to exact triangles,  such that for all uniformly bounded sequence $(M_n)$, and $M$,  compactly generated constructible persistence modules in $\D^b_{\R c }(\mathrm{Pers}_\kk(\R^d))$, \[d_I(M,M_n)\underset{n \longrightarrow +\infty}{\longrightarrow} 0 \Longrightarrow \delta(\lambda(M), \lambda(M_n)) \underset{n \longrightarrow +\infty}{\longrightarrow} 0.\] Then for all compactly generated constructible persistence modules $M,N \in \D^b_{\R c }(\mathrm{Pers}_\kk(\R^d))$:  \[d_I(M,N) < +\infty \Longrightarrow \delta (\lambda(M), \lambda(N)) = 0.\]

\end{theorem}

\begin{proof}
 By the universal property of the Grothendieck group, there exists a unique group morphism $\Phi : K(\D^b_{\R c}(\mathrm{Pers}_\kk(\R^d))) \longrightarrow G$ such that the following diagram of maps:
 
 \[\xymatrix{\mathrm{ob}(\D^b_{\R c}(\mathrm{Pers}_\kk(\R^d))) \ar[rr]^\lambda \ar[rd]_\kappa & & G \\
 & K(\D^b_{\R c}(\mathrm{Pers}_\kk(\R^d))) \ar[ru]_\Phi &} \]
 
 \noindent is commutative. For $\varphi,\psi \in K(\D^b_{\R c}(\mathrm{Pers}_\kk(\R^d))) $, define  $\delta^{-1}(\varphi, \psi) := \delta(\Phi(\varphi), \Phi(\psi)).$ It is a pseudo-extended metric on $K(\D^b_{\R c}(\mathrm{Pers}_\kk(\R^d)))$. Moreover, $\delta^{-1}$ is $d_I$-dominated by commutativity of the above diagram. Therefore, by theorem \ref{th:derivedpersistence}, for all $M,N \in \D^b_{\R c}(\mathrm{Pers}_\kk(\R^d))$ compactly generated, such that $d_I(M,N) < +\infty$, one has: \[\delta^{-1}(\kappa(M), \kappa(N)) = \delta(\Phi(\kappa(M)),\Phi(\kappa(N)) = \delta(\lambda(M), \lambda(N)) = 0.\]
\end{proof}

\begin{theorem} \label{th:mainpers}
     Let $\lambda : \mathrm{ob}(\mathrm{Pers}_{\kk, \R c}(\R^d)) \longrightarrow G$ be an additive map with respect to short exact sequences,  such that for all $(N_n)$ uniformly bounded sequence, and $N$,  compactly generated constructible persistence modules in $\mathrm{Pers}_{\kk, \R c}(\R^d))$, 
     \[d_I(N,N_n)\underset{n \longrightarrow +\infty}{\longrightarrow} 0\Longrightarrow \delta(\lambda(N), \lambda(N_n)) \underset{n \longrightarrow +\infty}{\longrightarrow} 0. \] 
     Also, assume that the sum map $G \times G \longrightarrow G$ and the symmetric map $G \longrightarrow G$ are continuous with respect to $\delta$. Then, for all compactly generated constructible persistence modules $M,N \in\mathrm{Pers}_{\kk, \R c}(\R^d)$, if $d_I(M,N) < +\infty$, then $\delta (\lambda(M), \lambda(N)) = 0$.
\end{theorem}

\begin{proof}
Let $\lambda$ be as in the statement of the theorem. We extend it as a map $\overline{\lambda} : \mathrm{ob}(\D^b_{\R c}(\mathrm{Pers}_\kk(\R^d))) \longrightarrow G$, by $\overline{\lambda}(M) := \sum_i (-1)^i \lambda(\Ho^i(M))$ (the sum is always finite by boundedness assumption, hence well-defined in $G$). One checks easily that $\overline{\lambda}$ is additive with respect to exact triangles. Moreover, since the $\varepsilon$-shift functor is exact, the cohomological functors $\Ho^i$ preserve interleavings. Therefore, for  $(N_n)$ and $N$ compactly generated persistence modules in $\D^b_{\R c}(\mathrm{Pers}_\kk(\R^d))$, if $d_I(N,N_n)\underset{n \longrightarrow +\infty}{\longrightarrow} 0$, then for all $i\in \Z$, $d_I(\Ho^i(N),\Ho^i(N_n))\underset{n \longrightarrow +\infty}{\longrightarrow} 0$. Therefore, by hypothesis, \[\delta(\lambda(\Ho^i(N)),\lambda(\Ho^i(N_n))) \underset{n \longrightarrow +\infty}{\longrightarrow} 0.\] Assume in addition that $(N_n)$ is uniformly bounded, that is, there exists $C \in \Z_{\geq 0}$ such that for all $n \geq 0$, $\Ho^i(N_n) \simeq 0$ whenever $|i |> C$. We also assume without loss of generality that $\Ho^i(N) \simeq 0$ for all $|i |> C$. Therefore, $\overline{\lambda}(N) = \sum_{i = -C}^C (-1)^i \lambda(\Ho^i(N))$, and for all $n \geq 0$, $\overline{\lambda}(N_n) =  \sum_{i = -C}^C (-1)^i \lambda(\Ho^i(N_n))$. By continuity of the group operations with respect to $\delta$, one has:

\[\delta(\overline{\lambda}(N),\overline{\lambda}(N_n)) \underset{n \longrightarrow +\infty}{\longrightarrow} 0.\]

Therefore, $\overline{\lambda}$ satisfies the hypothesis of theorem \ref{th:derivedpersistence}, and we can conclude  for all compactly generated persistence modules $M,N \in \mathrm{Pers}_{\kk, \R c}(\R^d)$, if $d_I(M,N)< +\infty$, then:

\[\delta(\lambda(M),\lambda(N)) = \delta (\overline{\lambda}(M), \overline{\lambda}(N)) = 0.\]

\end{proof}

Our effort to formulate the consequences of our main result in a purely persistent and non-derived setting, allows using the universality result proved by Lesnick in \cite[Corollary 5.6]{Les15}, that we recall here. Given $f : X \longrightarrow \R^d$, its sub-levelsets filtration is the functor $\mathcal{S}(f) : (\R^d, \leq ) \longrightarrow \textbf{Top}$ defined by $\mathcal{S}(f)(x) := f^{-1} \{s \in \R^d \mid s \leq x\}$, and its $i$-th persistence module is the functor $\mathcal{S}_i(f) :=\Ho_i(- ; \kk) \circ \mathcal{S}(f)$, with  $\Ho_i(- ; \kk)$ the $i$-th singular homology with coefficients in $\kk$ functor. 

For $f : X \longrightarrow \R^d$ and $g : X \longrightarrow \R^d$ two continuous maps of topological space, one sets:

\[d_\infty (f,g) := \inf_{h : X \stackrel{\sim}{\longrightarrow} Y} \sup_{x\in X} \|f(x) - g \circ h (x)\|_\infty,\] where $h$ ranges over all homeomorphisms from $X$ to $Y$.

\begin{theorem}[\cite{Les15}]\label{th:universality}
     Let $\kk$ be a prime field, and $d$ be a pseudo-extended metric on $\mathrm{ob}(\mathrm{Pers}_\kk(\R^d))$, such that for all maps of topological spaces $f : X \longrightarrow \R^d$ and $g : Y \longrightarrow \R^d$, one has \[d(\mathcal{S}_i(f), \mathcal{S}_i(g)) \leq d_\infty(f,g).\]
     
     Then $d \leq d_I$.
\end{theorem}

Note that Lesnick's proof relies on the existence of geometric lift for interleavings of persistence modules \cite[Proposition 5.8]{Les15}, which holds without any assumption on the persistence modules. Therefore, theorem \ref{th:universality} restricts to constructible persistence modules in the following way.

\begin{theorem}[Universality, constructible version] \label{th:universalityconstructible} Let $\kk$ be a prime field, and $d$ be a pseudo-extended metric on $\mathrm{ob}(\mathrm{Pers}_{\kk, \R c}(\R^d))$, such that for all maps of topological spaces $f : X \longrightarrow \R^d$ and $g : Y \longrightarrow \R^d$, such that $\mathcal{S}_i(f)$ and $\mathcal{S}_i(g))$ are constructible, one has \[d(\mathcal{S}_i(f), \mathcal{S}_i(g)) \leq d_\infty(f,g).\]
     
     Then $d \leq d_I$.

\end{theorem}

Combining Lesnick's universality theorem with our theorem \ref{th:mainpers}, we obtain the following corollary. 

\begin{corollary} 
 Let $\lambda : \mathrm{ob}(\mathrm{Pers}_{\kk, \R c}(\R^d)) \longrightarrow G$ be an additive map with respect to short exact sequences, such that for all maps of topological spaces $f : X \longrightarrow \R^d$ and $g : Y \longrightarrow \R^d$, one has \[\delta(\lambda(\mathcal{S}_i(f)), \lambda(\mathcal{S}_i(g))) \leq d_\infty(f,g).\]
 
 Then for all compactly generated persistence modules $M,N \in \mathrm{Pers}_{\kk, \R c}(\R^d)$, one has 
 
 \[d_I(M,N) < +\infty \Longrightarrow \delta(\lambda(M), \lambda(N)) = 0.\]
\end{corollary}

\subsection{Examples}

In this section, we apply our results to well-known constructions that are common to TDA. We still denote by $\V$ a finite dimensional real vector space endowed with a norm $\|\cdot\|$.

\subsubsection{Radon Transforms}

Radon Transforms are a general class of transformations on constructible functions, for which there exists a well-formulated criterion of invertibility \cite{Sch95}. The Euler Characteristic Transform (ECT) is a particular instance of invertible Radon transform, which has found numerous applications \cite{BG09, TMS14, CMT18, Crawford_2019}.

Let $X$ and $Y$ be two real analytic manifolds, and let $S \subset X \times Y$ be a locally closed subanalytic subset. Let $q_1$ and $q_2$ be the first and second projection defined on $X \times Y$. We shall assume the following hypothesis:

\begin{equation}\label{hyp:properness}
    q_2 \textnormal{~is proper on the closure of~} S \mathrm{~in}~ X \times Y.
\end{equation}

\begin{definition}
The Radon transform associated to $S$, is the group homomorphism $\mathscr{R}_S : \CF(X) \longrightarrow \CF(Y)$ defined by:

\[ \mathscr{R}_S(\varphi) := q_{2 \ast} \left [ (\varphi \circ q_1) \cdot 1_S \right ].  \]
\end{definition}

\begin{remark}
When $X = \V$, $Y = \mathbb{S}^\ast \times \R$, and $S_{ECT} = \{(v, (\xi, t)) \in X \times Y \mid \xi(v) \leq t\}$, the transform $\mathscr{R}_{S_{ECT}}$ is the usual Euler Characteristic Transform. We have denoted $\mathbb{S}^\ast$ the unit dual sphere of $\V^\ast$.
\end{remark}

\begin{proposition}
Let $X = \V$, and $Y$ be real analytic manifold, and let $S \subset X \times Y$ satisfying hypothesis (\ref{hyp:properness}). Let $\delta$ be a pseudo-extended distance on $\CF(Y)$, such that $\delta \circ (\mathscr{R}_S \times \mathscr{R}_S)$ is $d_C$-dominated. Then for all $\varphi,\psi \in \CF(\V)$ with compact support such that $\int \varphi \mathrm{d}\chi = \int \psi \mathrm{d}\chi$, one has: \[\delta\left(\mathscr{R}_S(\varphi), \mathscr{R}_S(\psi) \right) = 0.\]
\end{proposition}

\begin{proof}
This is a straightforward consequence of theorem \ref{th:main}.
\end{proof}


    





     



\subsubsection{Amplitudes}

In \cite{Amplitudes}, the authors introduce the notion of amplitude on an abelian category $\mathscr{A}$, as a notion of measurement of the size of objects of $\mathscr{A}$, compatible with exact sequences.

\begin{definition}
Let $\mathscr{A}$ be an abelian category. An amplitude on $\mathscr{A}$ is a class function $\lambda : \mathrm{ob}(\mathscr{A}) \longrightarrow [0, + \infty]$ satisfying $\lambda(0) = 0$, and for all short exact sequence $0 \longrightarrow A \longrightarrow B \longrightarrow C \longrightarrow 0$:

\begin{center}
    \begin{enumerate}
    \item $\lambda(A) \leq \lambda(B)$;
    \item $\lambda(C) \leq \lambda(B)$;
    \item $\lambda(B) \leq \lambda(A) + \lambda(C)$.
\end{enumerate}
\end{center}

\noindent The amplitude $\lambda$ will be said to be additive if $(3)$ is an equality.
\end{definition}

\begin{proposition}\label{P:negamplitude}
Let $\lambda$ be an additive amplitude on $\mathrm{Pers}_{\kk, \R c}(\R^d)$, such that for all  $(M_n)$ and $M$ compactly generated in $\mathrm{Pers}_{\kk, \R c}(\R^d)$, one has:

\[d_I(M_n,M) \underset{n \longrightarrow +\infty}{\longrightarrow} 0 \Longrightarrow |\lambda(M_n) - \lambda(M)| \underset{n \longrightarrow +\infty}{\longrightarrow} 0. \]

\noindent Then for all compactly generated persistence module $M, N \in \mathrm{Pers}_{\kk, \R c}(\R^d)$ such that $d_I(M,N) < +\infty$, one has $\lambda(M) = \lambda(N)$.
\end{proposition}

\begin{proof}
We apply theorem \ref{th:mainpers} to the additive amplitude $\lambda$, where $G = (\R,+)$ is endowed with the standard metric. Thus, for all $M,N \in \mathrm{Pers}_{\kk, \R c}(\R^d)$ compactly generated, $|\lambda(M) - \lambda(N)| = 0$.    
\end{proof}

\begin{corollary}\label{C:amplitudes}
Assume that $\kk$ is a prime field, and let $\lambda : \mathrm{ob}(\mathrm{Pers}_{\kk, \R c}(\R^d)) \longrightarrow [0,+\infty]$ be an additive amplitude on persistence modules such that for all maps of topological spaces $f : X \longrightarrow \R^d$ and $g : Y \longrightarrow \R^d$, and all $i \in \Z_{\geq 0}$, if $\mathcal{S}_i(f)$ and $\mathcal{S}_i(g)$ are constructible, then:
 
 \[ |\lambda(\mathcal{S}_i(f)) - \lambda(\mathcal{S}_i(g))| \leq d_\infty(f,g). \]
 
 \noindent Then for all compactly generated and constructible $M,N \in  \mathrm{Pers}_{\kk, \R c}(\R^d)$, one has \[\lambda(M) = \lambda(N).\]
\end{corollary}

\subsubsection{Additive vectorizations}

It is well-known that persistence modules endowed with the interleaving distance do not embed isometrically into any Hilbert space. Nevertheless, since most machine learning techniques take as input elements of a vector space, it is a very common strategy to define a so-called vectorization of persistence modules, that is, a map $\Phi : \mathrm{ob}(\mathrm{Pers}_{\kk, \R c}(\R^d)) \longrightarrow \mathbb{W}$, where $\mathbb{W}$ is a real vector space, usually endowed with a norm $\|\cdot\|$. We will say that $\Phi$ is additive if it is additive with respect to short exact sequence of persistence modules.

\begin{proposition}\label{P:vectorization}
Let $\Phi : \mathrm{ob}(\mathrm{Pers}_{\kk}(\R^d)) \longrightarrow (\mathbb{W}, \| \cdot \|)$ be an additive vectorization of persistence modules satisfying for all  $(M_n)$ and $M$ in $\mathrm{Pers}_{\kk, \R c}(\R^d)$:

\[d_I(M_n,M) \underset{n \longrightarrow +\infty}{\longrightarrow} 0 \Longrightarrow \|\Phi(M_n) - \Phi(M)\| \underset{n \longrightarrow +\infty}{\longrightarrow} 0. \]

Then for all compactly generated and constructible persistence modules $M,N \in \mathrm{Pers}_{\kk}(\R^d) $, if $d_I(M,N) < +\infty$, then $\Phi(M) = \Phi(N)$.
\end{proposition}

\begin{proof}
The proof is similar to Proposition \ref{P:negamplitude}.
\end{proof}

\begin{corollary}\label{C:vectorization}
Assume that $\kk$ is a prime field, and let $\Phi : \mathrm{ob}(\mathrm{Pers}_{\kk, \R c}(\R^d)) \longrightarrow (\mathbb{W}, \| \cdot \|)$ be an additive vectorization of persistence modules such that for all maps of topological spaces $f : X \longrightarrow \R^d$ and $g : Y \longrightarrow \R^d$, and all $i \in \Z_{\geq 0}$, one has:
 
 \[ \|\Phi(\mathcal{S}_i(f)) - \Phi(\mathcal{S}_i(g))\| \leq d_\infty(f,g). \]
 
Then for all compactly generated and constructible persistence modules $M,N \in \mathrm{Pers}_{\kk}(\R^d) $, if $d_I(M,N) < +\infty$, then $\Phi(M) = \Phi(N)$.
\end{corollary}

\begin{remark}
The construction $\|\Phi(\cdot)\|$, where $\Phi$ is an additive vectorization of persistence modules, provides a very general mean of defining not necessarily additive amplitudes of persistence modules. One interpretation of proposition \ref{P:vectorization} and corollary \ref{C:vectorization} is that such amplitudes can never be reasonably controlled, either by the interleaving distance $d_I$ on persistence modules, nor by the infinite distance $d_\infty$ on functions.
\end{remark}

\section{Discussion and further work}

Our main results Theorem \ref{th:main} and Corollary \ref{cor:main} show that any distance on the group of constructible that can be controlled by the convolution distance --in the sense of domination-- vanishes as soon as two compactly supported constructible functions $\varphi,\psi$ have the same Euler integral, a condition that is satisfied whenever there exists two sheaves $F,G \in \D^b_{\R c}(\kk_\V)$ satisfying $d_C(F,G) < +\infty $ and such that  $\varphi = \chi(F)$ and $\psi = \chi(G)$. The convolution distance is usually interpreted as a $\ell_\infty$ type metric, because of the form of stability (Theorem \ref{th:stability}) it satisfies. Our results therefore give a strong negative incentive on the possibility of a obtaining a $\ell_\infty$-control of the pushforward operation on constructible functions.

In terms of Topological Data Analysis, our result shows that additive invariants of persistence modules cannot be stable with respect to the interleaving distance. Therefore, in order to obtain well-behaved invariants, it is necessary to either loosen the additivity assumption, or to consider stability with respect to other type of distances. These are both active fields of research \cite{bubenik2018algebraic,skraba2020wasserstein,BOOS22}, for which we hope that the present article will highlight the importance.

Schapira recently introduced the concept of constructible functions up to infinity \cite{schapira2021constructible}, that allows one to define Euler integration of constructible functions without compact support. We conjecture that Theorem \ref{th:main} holds when replacing \emph{with compact support} by \emph{constructible up to infinity}, though  we do not know how to adapt our  $\varepsilon$-flag technique to this setting.

\bibliographystyle{amsplain}


\bibliography{biblio}

\end{document}